\def\newaliasedtheorem#1[#2]#3{
	\newaliascnt{#1@alt}{#2}
	\newtheorem{#1}[#1@alt]{#3}
	\expandafter\newcommand\csname #1@altname\endcsname{#3}
}
\numberwithin{equation}{section}
\newtheoremstyle{slanted}{\topsep}{\topsep}{\slshape}{}{\bfseries}{.}{.5em}{}
\theoremstyle{plain}
\newtheorem{theorem}{Theorem}[section]
\theoremstyle{definition}
\theoremstyle{remark}
\newcommand{\setR}{\mathbb{R}}
\newcommand{\setH}{\mathbb{H}}
\newcommand{\Vv}{V}
\newcommand{\Gg}{\mathfrak{g}}
\newcommand{\Gh}{\mathfrak{h}}
\newcommand{\Vvh}{V^{\mathfrak{h}}}
\newcommand{\Gw}{W}
\let\altphi\phi
\let\phi\varphi
\let\varphi\altphi
\let\altphi\undefined
\newcommand{\p}{\oplus}
\newcommand\numleq[1]%
\DeclareMathOperator{\de}{d}
\newfont{\tmpf}{cmsy10 scaled 2500}
\def\XXint#1#2#3{{\setbox0=\hbox{$#1{#2#3}{\int}$ }
		\vcenter{\hbox{$#2#3$ }}\kern-.6\wd0}}
\begin{document}
	
	\title{Pauls rectifiable and purely Pauls unrectifiable smooth hypersurfaces}
 
	\author[G. Antonelli and E. Le Donne]{Gioacchino Antonelli and Enrico Le Donne}
\address{\textsc{Gioacchino Antonelli}: 
Scuola Normale Superiore, Piazza dei Cavalieri, 7, 56126 Pisa, Italy}
\email{gioacchino.antonelli@sns.it}
\address{\textsc{Enrico Le Donne}: 
Dipartimento di Matematica, Universit\`a di Pisa, Largo B. Pontecorvo 5, 56127 Pisa, Italy \\
\& \\
University of Jyv\"askyl\"a, Department of Mathematics and Statistics, P.O. Box (MaD), FI-40014, Finland}
\email{enrico.ledonne@unipi.it}
	
	\renewcommand{\subjclassname}{%
 \textup{2010} Mathematics Subject Classification}
\subjclass[]{ 
53C17, 
22E25, 
28A75,  
49Q15. 
}
	\keywords{Carnot groups, codimension-one rectifiability,
smooth hypersurface,
	 intrinsic rectifiable set, intrinsic $C^1$ submanifolds,
intrinsic Lipschitz graph.}
 \thanks{
 E.L.D. was partially supported by the Academy of Finland (grant
288501
`\emph{Geometry of subRiemannian groups}' and by grant
322898
`\emph{Sub-Riemannian Geometry via Metric-geometry and Lie-group Theory}')
and by the European Research Council
 (ERC Starting Grant 713998 GeoMeG `\emph{Geometry of Metric Groups}').
}
	
	\maketitle
	\begin{abstract}
	This paper is related to the problem of finding a good notion of rectifiability in sub-Riemannian geometry. In particular, we study which kind of results can be expected for smooth hypersurfaces in Carnot groups. Our main contribution will be a consequence of the following result: there exists a $C^{\infty}$ hypersurface $S$ without characteristic points that has uncountably many pairwise non-isomorphic tangent groups on every positive-measure subset. The example is found in a Carnot group of topological dimension 8, it has Hausdorff dimension 12 and so we use on it the Hausdorff measure $\mathcal{H}^{12}$. As a consequence, we show that for every Carnot group of Hausdorff dimension 12, any Lipschitz map defined on a subset of it with values in $S$ has $\mathcal{H}^{12}$-null image. In particular, we deduce that this smooth hypersurface cannot be Lipschitz parametrizable by countably many maps each defined on some subset of some Carnot group of Hausdorff dimension $12$. As main consequence we have that a notion of rectifiability proposed by S.Pauls is not equivalent to one proposed by B.Franchi, R.Serapioni and F.Serra Cassano, at least for arbitrary Carnot groups. In addition, we show that, given a subset $U$ of a homogeneous subgroup of Hausdorff dimension $12$ of a Carnot group, every bi-Lipschitz map $f:U\to S$ satisfies $\mathcal{H}^{12}(f(U))=0$. Finally, we prove that such an example does not exist in Heisenberg groups: we prove that all $C^{\infty}$-hypersurfaces in $\mathbb H^n$ with $n\geq 2$ are countably $\mathbb{H}^{n-1}\times\mathbb R$-rectifiabile according to Pauls' definition, even with bi-Lipschitz maps.
	\end{abstract}

 	\tableofcontents
	
	\section{Introduction}
	\textbf{State of the art:} Measure-theoretic notions of rectifiability in Carnot groups has been deeply studied in the last 20 years. 
	At the end of the 90's it was understood by the work of Ambrosio and Kircheim \cite{AK99} that the classical notion of $k$-rectifiability in metric spaces was not the correct one in this setting. Recall that a set is $k$-rectifiable iff it is $\mathcal{H}^k$-a.e. covered by Lipschitz images of subsets of $\mathbb R^k$. Indeed, in \cite{AK99} (see also more general results in \cite{Mag04}) they showed that the first Heisenberg group $\mathbb H^1$ is purely $k$-unrectifiable for $k\geq 2$.  Then, in \cite{FSSC01}, in the setting of each Heisenberg group $\mathbb H^n$, Franchi, Serapioni and Serra Cassano, proposed a definition, named $C^1_{\rm H}$-hypersurface, that was meant to adapt the notion of regular surface from the Euclidean setting. Indeed, a $C^1_{\rm H}$-hypersurface is, locally around any point, the 0-level set of a $C^1_{\rm H}$-function with non-vanishing intrinsic gradient. In the same paper they proposed a definition of rectifiability for codimension-one sets in the setting of Heisenberg groups: a set in $\mathbb H^n$ is {\em codimension-one $C^1_{\rm H}$-rectifiable} iff it can be covered by countably many $C^1_{\rm H}$-hypersurfaces up to a $\mathcal{H}^{2n+1}$-null set. With this definition they proved that the reduced boundary of a set of finite perimeter is codimension-one $C^1_{\rm H}$-rectifiable thus showing that this could be a good notion of rectifiability at least for Heisenberg groups. Soon after they generalized these results to Carnot groups of step 2 in \cite{FSSC03a}.
	
	After the mentioned works, the class of $C^1_{\rm H}$-submanifolds has been intensely studied, also in general codimensions and beyond the Heisenberg setting. 
	For example, in \cite{FSSC03b} the authors
	provide an implicit function theorem for $C^1_{\rm H}$-functions in the setting of Carnot groups (see also \cite{CM06}), showing also that a $C^1_{\rm H}$-hypersurface is locally the boundary of a set of finite perimeter. In \cite{FSSC07} a systematic study of low dimensional and low codimensional $C^1_ {\rm H}$-submanifolds in $\mathbb H^n$ has been performed. In that reference the authors proposed also general definitions of {\em $k$-dimensional $C^1_{\rm H}$-rectifiability} in $\mathbb H^n$. 
	
	At the same time the problem of regularity of the parametrization of $C^1_{\rm H}$-submanifolds was widely studied: in \cite{FSSC06} the authors proposed the definitions of intrinsic Lipschitz function and intrinsic differentiable function in the setting of Heisenberg groups. In that reference they proved that a $C^1_{\rm H}$-submanifold is locally the graph of an intrinsic Lipschitz function - see also  \cite[Theorem A.5]{DMV19}. Thus, still in \cite{FSSC06}, the authors proposed, in the setting of Heisenberg groups and for any dimension $k$, a definition of rectifiability \emph{a priori} more general than $k$-dimensional $C^1_{\rm H}$-rectifiability, by using coverings with graphs of intrinsic Lipschitz functions. Later, in \cite{FSSC11}, they generalized the notion of intrinsic Lipschitz function and intrinsic differentiable function for arbitrary Carnot groups. They also showed, in $\mathbb{H}^n$, a Rademacher-type theorem for intrinsic Lipschitz functions defined on codimension-one subgroups and proved the equivalence between the two codimension-one definitions of rectifiability, i.e., the codimension-one $C^1_{\rm H}$-rectifiability and the one with covering by means of graphs of intrinsic Lipschitz functions. The Rademacher-type theorem and the equivalence of the two notions of codimension-one rectifiability has been extended to a larger class of Carnot groups in \cite{FMS14}, but not yet to all Carnot groups.
	
	A comprehensive presentation of the notion of intrinsic Lipschitz function is contained in \cite{FS16}. We point out that earlier studies of the notion of intrinsic differentiable function were also contained in \cite{ASCV06} in the setting of Heisenberg groups $\mathbb H^n$. In the reference it is showed that a $C^1_{\rm H}$-hypersurface is locally the graph of an uniformly intrinsic differentiable function that solves a \emph{Burger-type} equation. Generalizations of this result are contained in \cite{AS09}, in the setting of $C^1_{\rm H}$-submanifolds in $\mathbb H^n$, and in \cite{DiDonato18} for $C^1_{\rm H}$-hypersurfaces in the setting of Carnot groups of step 2. Further studies of metric properties of intrinsic Lipschitz graphs are contained also in \cite{CMPSC14}.\\
	
	At the beginning of 2000 Pauls proposed a different notion of rectifiability in \cite[Definition 4.1]{Pau04}. According to his definition, given $\mathbb G$ a Carnot group of Hausdorff dimension $Q$, a subset $E$ of another Carnot group is $\mathbb G$-rectifiable iff it can be covered $\mathcal{H}^Q$-a.e. by countably many Lipschitz images of subsets of $\mathbb G$.
	The relation between the two notions of differentiability - namely Pauls' one and the one(s) by Franchi, Serapioni and Serra Cassano - is far from being well understood. Notice that in \cite[Definition 3]{CP04} the authors propose another definition of rectifiability in which they allow $\mathbb G$ of the previous definition to be a homogeneous subgroup of a Carnot group.
	
	The query that has been left open is wether a $k$-dimensional $C^1_{\rm H}$-submanifold in a Carnot group is Lipschitz (or better bi-Lipschitz) parametrizable by subsets of  $k$-dimensional homogeneous subgroups of a Carnot group. One positive result in this direction has been obtained in \cite{CP04} in which the authors proved that any $C^1$-hypersurface in $\mathbb H^1$ is $N$-rectifiable, where $N$ is a vertical plane in $\mathbb H^1$ and the maps used for the parametrization are even defined on open sets. Then this result was improved by Bigolin and Vittone in \cite{BV10} showing that for any non-characteristic point of a $C^1$-hypersurface there exists a neighbourhood $U$ of it and bi-Lipschitz chart between an open subset of $N$ and $U$. In \cite{BV10} the authors also provided a partial negative answer to the query: they showed the existence of a $C^1_{\rm H}$-hypersurface in $\mathbb H^1$ and a point on it such that there is no bi-Lipschitz map from an open subset of $N$ and any neighbourhood of the point. 
	
	As far as we know, apart from these results, there are not general positive answers in the direction of parametrizing an arbitrary $C^1_{\rm H}$-hypersurface in a Carnot group - either with Lipschitz or bi-Lipschitz maps defined on measurable subsets of Carnot groups of the same dimension. Anyway recently, in a slightly different direction, Le Donne and Young in \cite{LDY19} proved that a sub-Riemannian manifold with constant Gromov-Hausdorff tangent $\mathbb G$, is countably $\mathbb G$-rectifiable, where $\mathbb G$ is a Carnot group. This result gives a possible way to show that smooth hypersurfaces in Carnot groups - sufficiently smooth in order to carry a sub-Riemannian structure - are $\mathbb G$-rectifiable for some $\mathbb G$, which is exactly what we do in the second part of this paper with smooth non-characteristic hypersurfaces in $\mathbb H^n$ with $n\geq 2$.
	We add that, using some ideas coming from the theory of quantitative differentiability, very recently Orponen \cite{Orp19} showed that any $C^{1,\alpha}_{\mathbb H}$-hypersurface with $\alpha>0$ in $\mathbb H^1$ is Lipschitz parametrizable with subsets of a vertical plane $N$. \\
	
	\textbf{Results:} In this paper we focus on the link between the two notions of differentiability explained above. The paper is essentially divided into two parts: in the first one we provide a \emph{negative}  result and in the second one we provide a \emph{positive} result. 
	
	In the first part we show the following (see \autoref{thm:T1.5}): 
	\begin{theorem}\label{thm:T1.5Intro}
		There exist a Carnot group $\mathbb G$ and a $C^{\infty}$ non-characteristic hypersurface $S\subseteq \mathbb G$ that is not Pauls Carnot rectifiable.
	\end{theorem}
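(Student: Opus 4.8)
The plan is to deduce Theorem~\ref{thm:T1.5Intro} from two ingredients: the construction of a suitable hypersurface $S$ inside an ambient Carnot group $\mathbb{G}$ (of topological dimension $8$ and Hausdorff dimension $13$, so that the codimension-one $S$ has Hausdorff dimension $12$), and a general non-parametrizability lemma that converts pointwise tangent information into a measure-theoretic obstruction. First I would record the reduction. If $S$ were $\mathbb{M}$-rectifiable for some Carnot group $\mathbb{M}$, then --- since $S$ carries a positive and locally finite $\mathcal{H}^{12}$-measure --- the model group $\mathbb{M}$ must have Hausdorff dimension $12$, and $S$ would be covered $\mathcal{H}^{12}$-almost everywhere by countably many Lipschitz images $f_i(A_i)$ with $A_i\subseteq\mathbb{M}$ and $f_i\colon A_i\to S$. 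Hence it suffices to prove the key lemma: every Lipschitz map $f\colon A\subseteq\mathbb{M}\to S$ from a subset of a Hausdorff-dimension-$12$ Carnot group has $\mathcal{H}^{12}(f(A))=0$. Summing over $i$ then yields $\mathcal{H}^{12}(S)=0$, contradicting that $S$ is a nonempty smooth hypersurface of Hausdorff dimension $12$.

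For the key lemma I would argue by contradiction, assuming $\mathcal{H}^{12}(f(A))>0$. Composing with the inclusion $S\hookrightarrow\mathbb{G}$ and invoking Pansu's differentiability theorem, at $\mathcal{H}^{12}$-a.e.\ $x\in A$ the map $f$ admits a Pansu differential $Df_x$, a homogeneous homomorphism $\mathbb{M}\to\mathbb{G}$ whose image lies in the metric tangent of $S$ at $f(x)$. Since $S$ is smooth and non-characteristic, that tangent is the homogeneous subgroup $\mathbb{W}_{f(x)}\subseteq\mathbb{G}$ of Hausdorff dimension $12$, so $Df_x\colon\mathbb{M}\to\mathbb{W}_{f(x)}$. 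By the area formula for Lipschitz maps on Carnot groups, $\mathcal{H}^{12}(f(A))>0$ forces the Jacobian of $Df_x$ to be positive on a positive-measure set of $x$, that is, $Df_x$ is onto $\mathbb{W}_{f(x)}$ there. The rigidity step is then that a surjective homogeneous homomorphism between graded groups of equal Hausdorff dimension is an isomorphism, because the graded dimension is additive, $\dim_H\mathbb{M}=\dim_H(\ker Df_x)+\dim_H\mathbb{W}_{f(x)}$, forcing a trivial kernel. Hence $\mathbb{W}_{f(x)}\cong\mathbb{M}$ for all $x$ in a positive-measure subset, and pushing this forward (again via the area formula) produces a positive-$\mathcal{H}^{12}$-measure subset of $S$ on which every tangent group is isomorphic to the single group $\mathbb{M}$. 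This contradicts the defining property of $S$, namely uncountably many pairwise non-isomorphic tangents on every positive-measure subset, and the lemma follows.

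The genuinely hard part, and the main obstacle, is producing $S$ with that tangent-variation property. The tangent group $\mathbb{W}_p$ at a non-characteristic point is determined by the position of the horizontal normal of $S$ relative to the stratification, so I would first look for an ambient group $\mathbb{G}$ --- of the smallest convenient size, here topological dimension $8$ --- in which the family of codimension-one homogeneous tangent subgroups realizes a continuous modulus of pairwise non-isomorphic Lie-algebra types rather than finitely many; this is where the dimension and step must be pushed high enough that genuine moduli of nilpotent Lie algebras appear in the hyperplane-subgroup family, since in small cases these types are rigid. I would then exhibit an explicit $C^\infty$ defining function whose horizontal Gauss map is non-degenerate enough that the induced assignment $p\mapsto[\mathbb{W}_p]$ to the space of isomorphism types is non-constant on every open piece, and in fact has all its level sets $\mathcal{H}^{12}$-negligible, so that no positive-measure subset can confine its tangents to countably many types. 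Reconciling the rigidity imposed by smoothness with the flexibility needed for this uncountable, measure-robust spread of isomorphism classes, while keeping $S$ non-characteristic everywhere, is the delicate core of the argument; by comparison the differentiability and area-formula machinery above is standard.
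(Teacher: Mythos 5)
Your reduction and your ``key lemma'' follow essentially the same route as the paper: compose with the inclusion $S\hookrightarrow\mathbb G$, differentiate $\mathcal H^{12}$-a.e.\ (the paper uses Magnani's extension of Pansu's theorem to maps on measurable subsets, \autoref{thm:Rademacher}, which is the version you actually need since your domain $A$ is not open), observe that the differential lands in the tangent group $T^{\rm I}_{f(x)}S$, use the area formula (\autoref{thm:AreaFormula}) to force the differential to be an isomorphism onto that tangent on a set whose image has positive measure, and derive a contradiction with the tangent-variation property. Your ``rigidity step'' (additivity of the graded dimension over kernel and image) is exactly the paper's \autoref{lem:StrictDimension}. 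This half of your argument is sound in outline and matches \autoref{cor:T1.5}.

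The genuine gap is that you never produce the object on which everything rests. You correctly identify what is needed --- an ambient Carnot group in whose family of codimension-one vertical subgroups a continuous modulus of isomorphism types appears, together with a smooth non-characteristic defining function whose tangent assignment has $\mathcal H^{12}$-negligible level sets --- and you correctly locate the obstruction (below dimension $7$ there are only finitely many Carnot algebras up to isomorphism). But ``I would look for such a group and exhibit such a function'' is precisely the content that must be supplied, and it is the paper's main construction. Concretely, the paper takes the one-parameter family $\Gg_\mu$ of type-147E seven-dimensional Carnot algebras (\autoref{def:Gmu}), pairwise non-isomorphic for distinct values of the invariant $I(\mu)=(1-\mu+\mu^2)^3/\bigl(\mu^2(\mu-1)^2\bigr)$, and adjoins to $\Gg_0$ a fourth horizontal generator $X_0$ with brackets chosen so that Jacobi holds and so that ${\rm span}\{X_1,\,X_2+\lambda X_0,\,X_3\}$ generates a Carnot subalgebra isomorphic to $\Gg_\lambda$ (\autoref{def:G}, \autoref{prop:R1}); it then takes the vertical surface $S=\{x_0+\tfrac13 x_2^3=0\}$, whose tangent at $x$ is isomorphic to $\Gg_{-x_2^2}$ (\autoref{rem:Ex2}), and shows $\mathcal H^{12}(S\cap\{x_2=\lambda\})=0$ via the intrinsic Lipschitz graph dimension estimates, which is what upgrades ``non-constant tangent type'' to ``uncountably many types on every positive-measure subset'' (\autoref{thm:T1}). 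None of these steps is automatic --- in particular the existence of an $8$-dimensional stratified algebra containing the $\Gg_\lambda$ as subalgebras \emph{generated by their first stratum}, arranged so that they arise as tangents of a single smooth non-characteristic hypersurface, requires the explicit bracket relations and the Jacobi verification. Without this construction the proof is incomplete.
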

	Pauls Carnot rectifiability is just a generalization of Pauls' rectifiability defined in \cite[Definition 4.1]{Pau04} in which we allow countably many Carnot groups, see \autoref{def:LipCarRect}. Our result shows that even very regular objects, such as smooth non-characteristic hypersurfaces, which for sure are rectifiable according to Franchi, Serapioni and Serra Cassano, are not so according to the definition given in \cite[Definition 4.1]{Pau04}.
	
	After, we show that such an example does not exist in the setting of $\mathbb H^n$ with $n\geq 2$ (see \autoref{thm:FinalTheorem} and \autoref{rem:BiLipPauls} for a more  exhaustive statement):
	\begin{theorem}\label{thm:FinalTheoremIntro}
		Let $S$ be a $C^{\infty}$-hypersurface in $\mathbb{H}^n$ with $n\geq 2$. Then $S$ is $\mathbb H^{n-1}\times \mathbb R$-rectifiable according to Pauls' definition of rectifiability {\em \cite[Definition 3]{CP04}}, even with bi-Lipschitz maps. \\
	\end{theorem}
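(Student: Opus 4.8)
The plan is to reduce everything to the non-characteristic part of $S$, on which the surface carries an intrinsic sub-Riemannian structure with constant metric tangent $\mathbb{H}^{n-1}\times\mathbb{R}$, and then to invoke the Le Donne--Young theorem \cite{LDY19}. Write $\Delta$ for the horizontal distribution of $\mathbb{H}^n$ and let $\Sigma\subseteq S$ be the characteristic set, i.e. the points $p$ where $\Delta_p\subseteq T_pS$. Since $S$ is $C^\infty$, the set $\Sigma$ is closed in $S$ and $\mathcal{H}^{2n+1}$-negligible by the classical estimates on the size of the characteristic set, so it suffices to cover $S\setminus\Sigma$ up to an $\mathcal{H}^{2n+1}$-null set. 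On the open submanifold $S\setminus\Sigma$ I would consider the intrinsic distribution $H:=TS\cap\Delta$, of constant rank $2n-1$, equipped with the metric inherited from the ambient sub-Riemannian metric, and denote by $\dist_S$ the associated Carnot--Carath\'eodory distance on $S\setminus\Sigma$.

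The first key step is to identify the metric tangent of $(S\setminus\Sigma,\dist_S)$. Viewing $\Delta$ as a symplectic bundle with form $\omega=\sum_i \mathrm{d}x_i\wedge\mathrm{d}y_i$, the bracket of two horizontal fields $V,W$ satisfies $[V,W]\equiv \omega(V,W)\,T \pmod{\Delta}$, where $T$ spans the center. Away from $\Sigma$ the fibre $H_p$ is a hyperplane of the symplectic space $(\Delta_p,\omega)$, so $\omega|_{H_p}$ has rank $2n-2$ and a one-dimensional radical. Since sections of $H$ are tangent to $S$, their brackets are again tangent to $S$; for $n\ge 2$ we have $2n-2\ge 2$, hence there exist sections $V,W$ of $H$ with $\omega(V,W)\ne 0$, and the resulting bracket produces the missing vertical direction inside $TS$. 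This shows that $H$ is equiregular of step $2$ with growth vector $(2n-1,2n)$, and that its nilpotentization at every point is $\mathfrak{h}^{n-1}\oplus\mathbb{R}$, the Lie algebra of $\mathbb{H}^{n-1}\times\mathbb{R}$: the symplectic part of $\omega|_H$ yields the Heisenberg factor and the radical yields the central $\mathbb{R}$ factor. By Mitchell's theorem the pointed Gromov--Hausdorff tangent of $(S\setminus\Sigma,\dist_S)$ is constantly $\mathbb{H}^{n-1}\times\mathbb{R}$, so \cite{LDY19} applies and gives that $(S\setminus\Sigma,\dist_S)$ is countably $\mathbb{H}^{n-1}\times\mathbb{R}$-rectifiable by bi-Lipschitz maps. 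Note that for $n=1$ the rank of $H$ drops to $1$, $\omega|_H\equiv 0$, and $H$ fails to be bracket-generating: this is exactly where the hypothesis $n\ge 2$ enters.

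I expect the main obstacle to be that \cite{LDY19} produces maps that are bi-Lipschitz for the intrinsic distance $\dist_S$, whereas Pauls' definition \cite[Definition 3]{CP04} refers to the distance $\dist_{\mathbb{H}^n}|_S$ induced by the ambient group. I would therefore prove that, locally on $S\setminus\Sigma$, the distances $\dist_S$ and $\dist_{\mathbb{H}^n}|_S$ are bi-Lipschitz equivalent. One inequality is immediate: every $H$-horizontal curve in $S$ is an ambient horizontal curve of the same length, whence $\dist_{\mathbb{H}^n}|_S\le \dist_S$. The reverse inequality $\dist_S\le C\,\dist_{\mathbb{H}^n}|_S$ on small neighbourhoods is the technical heart: it amounts to joining two nearby points of $S$ by a curve tangent to $H$ whose length is controlled by their ambient distance, and I would obtain it through a ball-box estimate in privileged coordinates adapted to $H$, using that $S\setminus\Sigma$ is equiregular and that the weights of the ambient structure restricted to $S$ coincide with the intrinsic ones. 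Granting this equivalence, the identity $(S\setminus\Sigma,\dist_S)\to(S\setminus\Sigma,\dist_{\mathbb{H}^n}|_S)$ is locally bi-Lipschitz, so the two measures $\mathcal{H}^{2n+1}$ are mutually absolutely continuous and the bi-Lipschitz charts of the previous step remain bi-Lipschitz for the ambient distance. Since $\mathbb{H}^{n-1}\times\mathbb{R}$ embeds as a homogeneous subgroup of $\mathbb{H}^n$ of Hausdorff dimension $2n+1$, this covers $S\setminus\Sigma$, and hence, together with the negligibility of $\Sigma$, all of $S$, up to an $\mathcal{H}^{2n+1}$-null set by bi-Lipschitz images of subsets of $\mathbb{H}^{n-1}\times\mathbb{R}$, which is precisely the desired $\mathbb{H}^{n-1}\times\mathbb{R}$-rectifiability in the sense of \cite[Definition 3]{CP04}.
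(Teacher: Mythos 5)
Your proposal follows the same architecture as the paper's proof: remove the characteristic set (negligible by \cite{Bal03}), equip $S\setminus\Sigma$ with the distribution $\mathcal D=TS\cap\mathbb V_1$, show it is a step-2 bracket-generating distribution with nilpotentization constantly $\mathbb H^{n-1}\times\mathbb R$, apply \cite{LDY19} to the intrinsic sub-Riemannian metric, and then transfer the conclusion to the induced metric. Your symplectic derivation of the growth vector and of the tangent (the radical of $\omega|_{H_p}$ giving the central $\mathbb R$ factor, the non-degenerate part giving $\mathfrak{h}^{n-1}$) is a tidier packaging of \autoref{prop:MainProp2} combined with \autoref{lem:IsoSub} and \autoref{prop:EverywhereIsomorphic}, and is correct. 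The divergence, and the one genuine gap, is the inequality $d_{\rm int}\lesssim d$ on $S$, which you rightly identify as the technical heart but then compress into ``a ball-box estimate in privileged coordinates adapted to $H$''. This is not a routine invocation: the ball-box theorem bounds $d_{\rm int}$ in terms of intrinsic privileged coordinates, so you would still have to prove (i) that the ball-box constants can be taken locally uniform on $S\setminus\Sigma$, and (ii) that a point $q\in S$ with $d(p,q)\le r$ has intrinsic privileged coordinates of size $O(r)$ in degree one and $O(r^2)$ in degree two --- a second-order comparison between the ambient exponential coordinates restricted to $S$ and the intrinsic filtration of $H$. Item (ii) is precisely what the paper proves by other means: it writes $S$ locally as an intrinsic Lipschitz graph over a vertical subgroup, establishes the length comparison of \autoref{prop:OnB} for step-2 groups following \cite{CMPSC14}, and then in \autoref{prop:EquivalentDistances} exhibits an explicit horizontal curve (an integral curve of $D^{\phi}_{n+1}$ concatenated with a geodesic of the $\mathbb H^{n-1}$ factor) joining two given graph points with length $\lesssim d(x,y)$; note that the hypothesis $n\ge 2$ is used a second time there, to have the $\mathbb H^{n-1}$ factor available for that construction, and not only in your bracket-generating step. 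So your route is viable in outline, but the step you outsource to ``ball-box'' is exactly the argument that occupies most of Section \ref{sec:MainResults2} of the paper, and as written it is a plan rather than a proof.
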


	\textbf{Comments and ideas of the proofs:} To prove \autoref{thm:T1.5Intro}, whose proof is in Section \ref{sec:MainResults}, we will show the existence of a $C^{\infty}$-hypersurface $S$ - of Hausdorff dimension 12 in a Carnot group of topological dimension 8 - that cannot be $\mathcal{H}^{12}$-a.e.~covered by countably many Lipschitz images of subsets of Carnot groups of Hausdorff dimension 12. Notice that we also allow the Carnot groups to vary, thus using a more general definition of rectifiability with respect to the one given in \cite[Definition 4.1]{Pau04}. 
	
	We will actually show a more general property for $S$: for every Carnot group $\mathbb G$ of Hausdorff dimension 12, every Lipschitz map $f:U\subseteq \mathbb G\to S$  satisfies $\mathcal{H}^{12}(f(U))=0$ (see \autoref{cor:T1.5}). We will call this property \emph{purely Pauls Carnot unrectifiability} (\autoref{def:LipCarRect}), which implies that $S$ is not Pauls Carnot rectifiability, see \autoref{rem:Purely2}. The key property that $S$ enjoys is that every $\mathcal{H}^{12}$-positive subset of it has uncountably many points with pairwise non-isomorphic Carnot groups as tangents, see the statement and the proof of \autoref{thm:T1}, and \autoref{thm:T2}.
	
	The idea to build such a hypersurface is the following: at first, in \autoref{prop:R1}, we show the existence of a Carnot algebra of dimension $8$ that has uncountably many pairwise non-isomorphic Carnot subalgebras of dimension 7. This  is done by exploiting the existence of an uncountable family $\mathcal{F}$ of Carnot algebras of dimension 7 that are known to be pairwise non-isomorphic, see \cite{Gong98}. Notice that 7 is the minimum number for which this fact holds: there are, up to isomorphisms, only finitely many Carnot algebras of dimension $\leq 6$, see again \cite{Gong98}. Then we construct examples of smooth non-characteristic hypersurfaces $S$ in the Carnot group whose Lie algebra is the previous Carnot algebra of dimension 8, with the property that the tangent spaces to each $S$ form an uncountable subfamily of $\mathcal{F}$. With a particular choice of $S$ as in \autoref{rem:Ex2} we show that every $\mathcal{H}^{12}$-positive subset of $S$ has uncountably many points with pairwise non-isomorphic Carnot groups as tangents, as we planned.
	
	Having in our hands the pathological example $S$, we prove our main result, see \autoref{cor:T1.5}. We do it via a blow up analysis and using
    the area formula for Lipschitz maps between Carnot groups proved by Magnani in \cite{Mag01}.
	
	We point out that we also construct, in every Carnot group $\mathbb G$, a smooth non-characteristic hypersurface that has every subgroup of codimension-one of $\mathbb G$ as tangent, see \autoref{lem:EveryTangent} and \autoref{thm:T1}.
	
	\vspace{0.5cm}
	
	We also prove some strenghtening of \autoref{thm:T1.5Intro}. Namely, we show in \autoref{cor:T1} that our example $S$ is not, as we say, {\em bi-Lipschitz homogeneous rectifiable} (\autoref{def:TRect}). That is to say, it is impossible to $\mathcal{H}^{12}$-a.e. cover $S$ by countably many bi-Lipschitz images of subsets of metric spaces of Hausdorff dimension $12$ that have bi-Lipschitz equivalent tangents, a class larger than Carnot groups, which includes their subgroups. Actually, again, we prove more: we show that $S$ is \emph{purely bi-Lipschitz homogeneous unrectifiable} according to our definition \autoref{def:TRect}, after having provided a general criterion of \emph{purely bi-Lipschitz homogeneous unrectifiability} (\autoref{lem:KeyLemma}).
		
	Notice that, from this last result, it follows that $S$ is not rectifiable according to the countable bi-Lipschitz variant of the definition given in \cite[Definition 3]{CP04}, that is, the one that allows the parametrizing spaces to be homogeneous subgroups of Carnot groups, see also \autoref{rem:TangRectIsCarnotRect}. Nevertheless, we still are not able to prove that our counterexample is not rectifiable according to \cite[Definition 3]{CP04}, see \autoref{rem:NotAble}.
	
	We remark here that, from how we are going construct the example $S$, it follows that any tangent to $S$ is a Carnot group. Consequently, together with the previous discussed results, we immediately deduce that $S$ is also an example of metric space that cannot be Lipschitz parametrized by countably many of its tangents, see \autoref{rem:TangentRect}.

 	In \autoref{rem:SubRiemannian} we observe that $S$ has a structure of sub-Riemannian manifold and if we consider on it the sub-Riemannian distance it is still purely bi-Lipschitz homogeneous unrectifiable (\autoref{def:TRect}). \\
	
	To prove \autoref{thm:FinalTheoremIntro},  we will use \cite[Theorem 1.1.]{TY04}, \cite[Proposition 3.8.]{CMPSC14} and \cite[Theorem 1]{LDY19}. The proof is contained in Section \ref{sec:MainResults2}.
	
	The idea is the following: first we show that every non-characteristic hypersurface $S$ in $\mathbb H^n$ with $n\geq 2$ carries a structure of polarized manifold (\autoref{prop:EverywhereIsomorphic}). We show that the intersection of the horizontal bundle of $\mathbb H^n$ with the tangent bundle of $S$ is a step-2 bracket generating distribution (\autoref{prop:MainProp2}). This was already known from \cite[Theorem 1.1.]{TY04}, but we give a different proof based on simple explicit computations.
	
	Second we show that every sub-Riemannian structure on the polarized manifolds $S$ gives raise to a distance that is locally bi-Lipschitz equivalent to the distance on $S$ seen as subset of $\mathbb H^n$ (\autoref{prop:DistanceAreEquivalent}). We will call these latter distances the {\em intrinsic distance} and the {\em induced distance}, respectively.
	The equivalence is due to the general fact that in $\mathbb H^n$ with $n\geq 2$, the intrinsic distance and the induced distance on the graph of an intrinsic Lipschitz function are equivalent (\autoref{prop:EquivalentDistances}). We write the proof of this result, which was suggested to us by \"assler and Orponen, and which is actually a consequence of the result already known from \cite[Proposition 3.8.]{CMPSC14}. Third we use the fundamental tool \cite[Theorem 1]{LDY19} and the key fact that the tangents to the hypersurface are all isomorphic to $\mathbb H^{n-1}\times \mathbb R$ (\autoref{lem:IsoSub}). With these three steps we conclude the proof of \autoref{thm:FinalTheoremIntro}. \\
	
	\textbf{Structure of the paper:} The structure of the paper is the following: in Section \ref{sect:Preliminaries} we collect general definitions and tools that are useful for our aims. We recollect general definitions about Carnot groups and metric measure spaces; we recall the area formula for Lipschitz maps between Carnot groups; we revise some basic definitions and statements about $C^1_{\rm H}$-hypersurfaces, showing in particular that a $C^1_{\rm H}$-hypersurface has Hausdorff dimension $(Q-1)$ - being $Q$ the Hausdorff dimension of the group in which it lives - and $\mathcal{H}^{Q-1}$ is a locally doubling measure on it (\autoref{prop:C1HisQ-1}). We also stress that the tangent group to a $C^1_{\rm H}$-hypersurface is the Hausdorff tangent, which is a fact due to \cite[Theorem 3.1.1]{Koz15} (see \autoref{prop:GHTangent}).
	
	In Section \ref{sect:Rectifiability} we give different notions of rectifiability, such as bi-Lipschitz homogeneous rectifiability (\autoref{def:TRect}) and Pauls Carnot rectifiability (\autoref{def:LipCarRect}), the latter one being a variant of Pauls' rectifiability \cite[Definition 4.1]{Pau04}. Namely, a metric space of Hausdorff dimension $k$ is {\em Pauls Carnot rectifiable} if it is $\mathcal{H}^k$-a.e. covered by countably many Lipschitz images of subsets of Carnot groups of Hausdorff dimension $k$; a metric space of Hausdorff dimension $k$ is {\em bi-Lipschitz homogeneous rectifiable} if it is $\mathcal{H}^k$-a.e.~covered by countably many bi-Lipschitz images of subsets of metric spaces of Hausdorff dimension $k$ that have bi-Lipschitz equivalent tangents. We also give the notions of purely bi-Lipschitz homogeneous unrectifiability (\autoref{def:TRect}) and purely Pauls Carnot unrectifiability (\autoref{def:LipCarRect}), which are stronger version (see \autoref{rem:Purely} and \autoref{rem:Purely2}) of not being bi-Lipschitz homogeneous rectifiability and not being Pauls Carnot rectifiability, respectively. In \autoref{lem:KeyLemma} we provide a criterion for a metric space to be purely  bi-Lipschitz homogeneous unrectifiable. In \autoref{rem:TangRectIsCarnotRect} we discuss some definitions of rectifiability that are less general than \autoref{def:TRect}, while in \autoref{rem:LipCounterpart} we briefly discuss some Lipschitz counterpart of \autoref{def:TRect}.
	
	In Section \ref{sec:Construction} we construct a Carnot algebra of dimension $8$ that has uncountably many pairwise non-isomorphic Carnot subalgebras of dimension 7. The  construction is done in \autoref{prop:R1}. 
	
	In Section \ref{sec:MainResults} and Section \ref{sec:MainResults2} we show the main theorems we discussed above. Namely, first we show the main result \autoref{thm:T1.5Intro}, see \autoref{thm:T1.5}, and the streghted version we discussed above, see \autoref{cor:T1}. Second we show \autoref{thm:FinalTheoremIntro}, see \autoref{thm:FinalTheorem}.\\
	
	\textbf{Acknowledgments:} We would like to thank Katrin F\"assler and Tuomas Orponen for useful discussions around the topic of the paper. 

	\section{Preliminaries}\label{sect:Preliminaries}
	\subsection{Some standard definitions}\label{sub:Carnot}
	For definitions and theory about Carnot groups one can see \cite{LD17}, \cite{BLU07} and \cite[Section 2]{FSSC03a}. We  recall here some basic facts and terminology.
	 
	A Carnot group is a simply connected nilpotent Lie group whose Lie algebra is stratified and generated by the first stratum.
	If $\mathbb G$ is a Carnot group and $\Gg$ is its Lie algebra we thus have 
	$$
	\Gg=\Vv_1\p\dots\p\Vv_s,
	$$
	with $\Vv_{i+1}=[\Vv_1,\Vv_i]$ for every $1\leq i\leq s-1$, $\Vv_s\neq \{0\}$ and $[\Vv_1,\Vv_s]=\{0\}$. The number $s$ is called \emph{step} of the group $\mathbb G$. The dimension of the first stratum $\Vv_1$ is denoted by $m$ and the dimension of $\Gg$ by $n$.
	
	For a Carnot group the exponential map $\exp:\Gg\to\mathbb G$ is a diffeomorphism. Thus by means of this map, after a choice of a basis of $\Gg$, we can identify $\mathbb G$ with $\mathbb R^n$ with an operation $\cdot$ that can be explicitly written by making use of the Campbell-Hausdorff formula. We will use exponential coordinates
	$$
	(x_1,\dots,x_n)\mapsto \exp(x_1X_1+\dots+x_nX_n),
	$$
	where $\{X_1,\dots,X_n\}$ is a basis of $\Gg$ adapted to the stratification. Then $\{X_1,\dots,X_m\}$ is a basis of $\Vv_1$. With a little abuse of notation we will indicate with $X_i\in \Gg$ both a tangent vector at the identity element of $\mathbb G$ and the left-invariant vector field on $\mathbb G$ that agrees with it at the identity. We call $\{X_1,\dots,X_m\}$ a {\em basis of the horizontal space} $V_1$.
		
	On the Lie algebra $\Gg$ we have a family of linear maps $\delta_{\lambda}$ that act as 
	$$
	\delta_{\lambda}(v_i)=\lambda^i v_i, \qquad \mbox{if} \quad v_i\in \Vv_i.
	$$
	With an abuse of notations, we denote by $\delta_{\lambda}$ the group endomorphism on $\mathbb G$ with differential $\delta_{\lambda}$. Namely, by means of the exponential map we have  $\delta_{\lambda}:= \exp\circ \delta_{\lambda}\circ \exp^{-1}$ on $\mathbb G$ as well. 
	We call a homomorphism $\phi:\mathbb G\to\mathbb H$ between two Carnot groups a \emph{Carnot homomorphism} if 
	$$
	\phi\circ \delta_{\lambda} = \delta_{\lambda}\circ \phi, \qquad \forall \lambda>0.
	$$
	\begin{remark}\label{rem:CarnotHom}
		Every Carnot homomorphism induces a linear map $\phi_*:\Gg\to\Gh$, which is a Lie algebra homomorphism, such that $\phi_* \circ \delta_{\lambda} = \delta_{\lambda} \circ \phi_*$. From this property it easily follows that for every $1\leq i\leq s$ we get $\phi_*(\Vv_i)\subseteq \Vvh_i$, where $\Vv_i$ and $\Vvh_i$ are the $i$-th strata of $\Gg$ and $\Gh$, respectively.
	\end{remark}

	A {\em left-invariant homogeneous distance} $d:\mathbb G\times \mathbb G \to \mathbb R_{\geq 0}$ - sometimes we call it $d_\mathbb{G}$ - is a distance on $\mathbb G$ satisfying
	\begin{equation}\label{eqn:HomLeftInv}
	d(hg_1,hg_2)=d(g_1,g_2), \qquad d(\delta_{\lambda} g_1,\delta_{\lambda} g_2) = \lambda d(g_1,g_2), \qquad \forall h,g_1,g_2\in\mathbb G.
	\end{equation}   
	It follows from \cite[Proposition 3.5]{LD17} that such a distance is continuous with respect to the manifold topology on $\mathbb G$.
	
	There is a distinguished class of such distances, known as {\em Carnot-Carathéodory distances}. If we fix a norm $\|\cdot\|$ on the first stratum $\Vv_1$ of the Lie algebra $\Gg$ of $\mathbb G$, we can extend it left-invariantly on the horizontal bundle 
	$$
	\mathbb V_1(x):=(L_x)_{*}\Vv_1,
	$$
	for $x\in\mathbb G$, where $L_x$ is the left translation by $x$. We say that an absolutely continuous curve $\gamma:[0,1]\to \mathbb G$ is {\em horizontal} if
	$$
	\gamma'(t)\in \mathbb V_1(\gamma(t)), \qquad \mbox{for a.e.} \quad  t\in [0,1].
	$$
	We define
	$$
	d_{cc}^{\|\cdot\|}(x,y):= \inf\left\{\int_I \|\gamma'(t)\|: \quad \gamma(0)=x, \quad \gamma(1)=y, \quad \gamma \quad \mbox{horizontal}  \right\}.
	$$
	Chow-Rashevsky theorem states that this distance is finite. It is clearly homogeneous and left-invariant. 

	It is clear that any two homogeneous left-invariant distances $d_1$ and $d_2$ are equivalent: we write $d_1 \sim d_2$ and we mean that there exists $C\geq 1$ such that 
	$\frac{1}{C}d_1\leq d_2\leq Cd_1$.
	Then, from now on, we don't specify what homogeneous left-invariant distance we are choosing as we prove results that are true up to bi-Lipschitz maps.
	
	We remind that the Hausdorff dimension of a Carnot group $\mathbb G$ with respect to any left-invariant homogeneous distance is 
	$$
	Q:=\sum_{i=1}^s i\dim\Vv_i,
	$$ 	
	which we also call {\em homogeneous dimension}.
	
 	We recall now some definitions about metric spaces and metric measure spaces.
 	Given a metric space $(X,d)$, we indicate the open ball in the metric $d$ centered at $x$ of radius $r$ with $B^{d}(x,r)$. When the distance is clear we just write $B(x,r)$. The closed ball is indicated with $\overline{B}^{d}(x,r)$. We say that $(X,d)$ is proper if each closed ball $\overline{B}^{d}(x,r)$ is compact. We say that $(X,d_X)$ is {\em bi-Lipschitz equivalent} to $(Y,d_Y)$ if there exist a bijective map $f:X\to Y$ such that 
 	$$
 	\frac{1}{C}d_X(x_1,x_2)\leq d_Y(f(x_1),f(x_2))\leq Cd_X(x_1,x_2).
 	$$
 	
 	 We say that $(X,d,\mu)$ is a {\em metric measure space} if $(X,d)$ is a complete and separable metric space and $\mu$ is a Borel measure that is finite on bounded sets. A metric measure space $(X,d,\mu)$ is said to be locally doubling if for each $a\in X$ there exists $R_a>0$ and $C_a>0$ such that 
 	 $$
 	 \mu(B^{d}(x,2r))\leq C_a\mu(B^{d}(x,r))<+\infty \qquad \forall x\in B^{d}(a,R_a) \qquad  \forall 0\leq r\leq R_a. 
 	 $$
 	 For a proper locally doubling metric measure space as a consequence of Gromov compactness theorem we can say that for every $x\in X$, the set of Gromov-Hausdorff tangents $\mbox{Tan}(X,d,x)$ is nonempty. Indeed, we can say that for every sequence of positive numbers $\lambda_i\to 0$, up to subsequences 
 	 $$
 	 (X,\lambda_i^{-1}d,x) \to (X_{\infty},d_{\infty},x_{\infty})
 	 $$
 	 in the pointed Gromov-Hausdorff convergence. For general definitions and theory about (pointed) Gromov-Hausdorff convergence one can see \cite[Chapter 27]{Vil09} and \cite{BBI01}.
 	 
	 We indicate with $\mathcal{H}^k$ the Hausdorff measure of dimension $k$ associated to $d$ and with $\mathcal{S}^k$ the {\em spherical} Hausdorff measure of dimension $k$ associated to $d$. We denote with $\dim_H X$ the Hausdorff dimension of the metric space $X$. For these general definitions see \cite[2.10.2]{Fed69}. We indicate with $d_H$ the Hausdorff distance between sets, see \cite[2.10.21]{Fed69}.

	\subsection{Area formula for Lipschitz function between Carnot groups}\label{sub:Area}
	We shall recall the area formula for Lipschitz maps in Carnot groups which is due to Magnani. First we recall Rademacher theorem in this setting \cite[Theorem 3.9]{Mag01}.
	\begin{theorem}\label{thm:Rademacher}{\em [Magnani]}
		Let $\mathbb G$ and $\mathbb H$ be two Carnot groups. Let us call $Q$ the homogeneous dimension of $\mathbb G$. Then any Lipschitz map $f:A\subseteq\left(\mathbb G, d_{\mathbb{G}}\right)\to \left(\mathbb H,d_{\mathbb H}\right)$, where $A$ is a measurable set, is differentiable $\mathcal{H}^Q$- a.e., i.e., there exist, at $\mathcal{H}^Q$ a.e. point $x$ of $A$, a Carnot homomorphism $Df_x:\mathbb G\to \mathbb H$
		such that
		\begin{equation}\label{eqn:Diff}
		\lim_{y\in A, y\to x}\frac{d_{\mathbb H}(f(x)^{-1}f(y),Df_x(x^{-1}y))}{d_{\mathbb G}(x,y)}=0.
		\end{equation}
	\end{theorem}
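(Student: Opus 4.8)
The plan is to follow Pansu's blow-up strategy, adapted to the metric setting. For a density point $x$ of $A$ and for $t>0$ consider the rescaled maps
$$
f_{x,t}(v):=\delta_{1/t}\bigl(f(x)^{-1}f(x\cdot\delta_t v)\bigr),
$$
defined for those $v$ with $x\cdot\delta_t v\in A$. Since $f$ is Lipschitz and the dilations are homogeneous, the family $\{f_{x,t}\}_t$ is equi-Lipschitz and fixes the identity $e$, and at a density point its domains exhaust $\mathbb G$ as $t\to0$. Hence, by Arzel\`a--Ascoli, every sequence $t_i\to0$ has a subsequence along which $f_{x,t_i}$ converges, uniformly on compact sets, to some Lipschitz $g\colon\mathbb G\to\mathbb H$ with $g(e)=e$. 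The theorem amounts to showing that, for $\mathcal H^Q$-a.e.\ $x$, such a limit $g$ is unique (independent of the sequence) and is a Carnot homomorphism; uniqueness together with dilation-equivariance then give exactly \eqref{eqn:Diff} with $Df_x=g$.

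First I would establish horizontal directional derivatives. Fix a horizontal left-invariant vector field $X\in V_1$. Foliating $\mathbb G$ by the integral curves $t\mapsto y\exp(tX)$ and decomposing $\mathcal H^Q$ (a Haar measure, preserved by right translations) along this foliation by Fubini, the restriction of $f$ to $\mathcal H^Q$-a.e.\ such curve is a Lipschitz curve into $\mathbb H$. A Lipschitz curve into a Carnot group is horizontal and, by the one-dimensional Lebesgue differentiation theorem, admits a.e.\ a metric derivative; concretely, the limit
$$
\partial_X f(x):=\lim_{t\to0}\delta_{1/t}\bigl(f(x)^{-1}f(x\exp(tX))\bigr)
$$
exists for $\mathcal H^Q$-a.e.\ $x$ and lies in the image under $\exp$ of the first layer of $\mathfrak h$. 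Running this over a finite basis of $V_1$ and intersecting the resulting full-measure sets, I obtain a single set of full $\mathcal H^Q$-measure on which all horizontal directional derivatives exist.

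The core of the proof, and the step I expect to be the main obstacle, is to upgrade these horizontal derivatives into a genuine Carnot homomorphism and to promote directional convergence into the uniform convergence of \eqref{eqn:Diff}. Any blow-up limit $g$ is dilation-equivariant, $g(\delta_\lambda v)=\delta_\lambda g(v)$ for all $\lambda>0$, and one checks that its horizontal directional derivatives are the constants $\partial_X f(x)$. Reading these through $\exp$ as elements $w_X$ of the first layer of $\mathfrak h$, the key step is to prove that $X\mapsto w_X$ is linear on $V_1$ and that $g$ coincides with the unique Carnot homomorphism $L$ whose differential restricts to $X\mapsto w_X$ on the first stratum; since $\mathfrak g$ is generated by $V_1$, such an $L$ is determined on the higher strata by iterated brackets, and the dilation-equivariance of $g$ guarantees that these bracket relations pass to the limit. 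The delicate analytic point is controlling the error when a general increment $f(x)^{-1}f(x\,\delta_t v)$ is approximated by a concatenation of horizontal increments along a polygonal horizontal path from $e$ to $v$: the Lipschitz bound for $f$ and the homogeneity of the Carnot--Carath\'eodory structure must be combined to show that these errors, after rescaling by $\delta_{1/t}$, vanish uniformly on compact sets.

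Finally, uniqueness of the blow-up limit follows because any two subsequential limits are dilation-equivariant Lipschitz maps with the same horizontal derivatives, hence, by the previous step, equal. This produces a well-defined Carnot homomorphism $Df_x$ satisfying \eqref{eqn:Diff} at $\mathcal H^Q$-a.e.\ point of $A$, as claimed.
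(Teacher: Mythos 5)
First, a point of context: the paper does not prove this theorem; it is quoted verbatim from Magnani \cite[Theorem 3.9]{Mag01}, and the only proof-related material in the paper is \autoref{rem:DefinitionOfDifferential}, which records how Magnani constructs $Df_x$ (first on the countable set $E$ of finite products of horizontal exponentials with rational coefficients, then by density). Your outline follows the same Pansu-style blow-up architecture that Magnani uses, so the overall strategy is the right one.

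However, as a proof your proposal has a genuine gap, and it is exactly at the place you yourself flag as ``the main obstacle''. The Fubini argument gives you, for each fixed $X\in V_1$, a full-measure set on which $\partial_Xf(x)$ exists; but everything that makes the theorem nontrivial --- that $X\mapsto w_X$ is additive, that every subsequential blow-up limit $g$ equals the homomorphism $L$ determined by the $w_X$, and that the convergence in \eqref{eqn:Diff} is uniform rather than merely directional --- is asserted, not proved. The missing idea is an Egorov/approximate-continuity argument: to chain increments along a polygonal horizontal path from $e$ to $v$ you must evaluate directional derivatives of $f$ at the intermediate points $x\,\delta_t\!\left(\prod\exp(a_sX_{i_s})\right)$, which are \emph{near} $x$ but not equal to it, so you need the convergence $\delta_{1/t}\bigl(f(y)^{-1}f(y\exp(tX))\bigr)\to\partial_Xf(y)$ to hold uniformly for $y$ in a set of large measure together with approximate continuity of $y\mapsto\partial_Xf(y)$ at $x$; without this, the errors you describe cannot be shown to vanish after rescaling, and the identification of $g$ with $L$ (hence uniqueness of the blow-up and linearity of $X\mapsto w_X$) does not follow. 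This is precisely why Magnani's Step 1 establishes the limit simultaneously for the whole countable family $E$ of products of horizontal exponentials, rather than for single directions: the chaining is built into the class of test points from the start. A related, smaller issue is your claim that the domains of $f_{x,t}$ ``exhaust $\mathbb G$'' at a density point --- they only do so in measure, so the Arzel\`a--Ascoli step needs either a Lipschitz extension of $f$ off $A$ or an argument phrased for maps defined on sets converging locally in measure to $\mathbb G$.
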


	\begin{remark}\label{rem:DefinitionOfDifferential}
	For our aims we discuss here how $Df_x$ is defined. First of all, one takes $\{X_1,\dots,X_m\}$ a set of generators of the first stratum $\Vv_1$ of the Lie algebra $\Gg$ of $\mathbb G$. In \cite[Step 1 and Step 2]{Mag01} it is shown that if we take $z$ in the set 
	$$
	E:= \left\{\prod_{s=1}^{\gamma} \exp(a_{s}X_{i_s}): \gamma\in\mathbb N, \quad 1\leq i_s\leq m, \quad (a_{s})\in \mathbb Q^{\gamma}\right\}
	$$
	the limit 
	$$
	\lim_{x\delta_t z\in A, t\to 0} \delta_{1/t}\left(f(x)^{-1}f(x\delta_tz)\right)
	$$
	does exist in a subset of $A$ of $\mathcal{H}^Q$-full measure \cite[(3), Step 1, Proof of Theorem 3.9]{Mag01}. Now we get, being $E$ countable, that there exists a set of $\mathcal{H}^Q$-full measure in $A$, say $A_{\omega}$, such that for every $z \in E$ one has that the limit
	$$
	Df_x(z):= \lim_{x\delta_t z\in A, t\to 0} \delta_{1/t}\left(f(x)^{-1}f(x\delta_tz)\right)
	$$
	does exist for every $x\in A_{\omega}$. Then it can be shown \cite[Step 2]{Mag01} that on $A_{\omega}$, the map $Df_x$ can be extended to all $z\in \mathbb G$, exploiting the fact that $E$ is dense and just defining 
	$$
	Df_x(z)=\lim_{i\to+\infty} Df_x(z_i),
	$$
	where $z_i\in E$ and $z_i\to z$. In \cite[Step 2]{Mag01} it is shown that $Df_x$ is a Carnot homomorphism and in \cite[Step 3]{Mag01} it is shown that this differential satisfies \eqref{eqn:Diff}.
	\end{remark}

	\begin{definition}[Jacobian of a Lipschitz map]
		Given any Lipschitz $f:A\subseteq\left(\mathbb G, d_{\mathbb{G}}\right)\to \left(\mathbb H,d_{\mathbb H}\right)$ we can define the {\em Jacobian} 
		$$
		J_Q(Df_x):= \frac{\mathcal{H}^Q(Df_x(B(0,1)))}{\mathcal{H}^Q(B(0,1))},
		$$ 
		at any differentiability point $x$ of $f$.
	\end{definition}
	The following result is proved in \cite[Theorem 4.4]{Mag01}.
	\begin{theorem}\label{thm:AreaFormula}{\em [Magnani]}
		Given any Lipschitz map $f:A\subseteq\left(\mathbb G, d_{\mathbb{G}}\right)\to \left(\mathbb H,d_{\mathbb H}\right)$, where $A$ is a measurable set, we have
		$$
		\int_A J_Q(Df_x)\de\mathcal{H}^Q(x) = \int_\mathbb{H} \sharp(f^{-1}(y)\cap A)\de\mathcal{H}^Q(y).
		$$
	\end{theorem}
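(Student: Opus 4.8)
The plan is to run Federer's classical scheme for the area formula, replacing the Euclidean linear-algebra facts by the homogeneity and left-invariance of $\mathcal H^Q$ on Carnot groups and feeding in Rademacher's theorem (\autoref{thm:Rademacher}) for differentiability. By \autoref{thm:Rademacher} the map $f$ is differentiable at $\mathcal H^Q$-a.e.\ point of $A$, with Carnot homomorphism differential $Df_x$. Discarding the $\mathcal H^Q$-null non-differentiability set, I split $A=Z\sqcup(A\setminus Z)$, where $Z$ is the set of points at which $Df_x$ fails to be injective. Since the counting measure is additive over disjoint sets, both sides of the asserted identity split along this decomposition, and I treat the two parts separately.

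\emph{The degenerate part.} If $x\in Z$, then $Df_x(\mathbb G)$ is a homogeneous subgroup of $\mathbb H$ of homogeneous dimension strictly less than $Q$, so $\mathcal H^Q(Df_x(B(0,1)))=0$ and hence $J_Q(Df_x)=0$; thus the left-hand side contributes nothing over $Z$. For the right-hand side I claim $\mathcal H^Q(f(Z))=0$, whence $\int_{\mathbb H}\sharp(f^{-1}(y)\cap Z)\de\mathcal H^Q(y)=0$ as well, being an integral over an $\mathcal H^Q$-null set. The nullity of $f(Z)$ is a Sard-type estimate: by the differentiability \eqref{eqn:Diff}, for $x\in Z$ and small $r$ the image $f(B(x,r)\cap A)$ lies in a thin neighborhood of the lower-dimensional set $f(x)\cdot Df_x(B(0,r))$, and summing these contributions over a Vitali cover of $Z$—legitimate because $(\mathbb G,d_{\mathbb G},\mathcal H^Q)$ is Ahlfors $Q$-regular, hence doubling—bounds $\mathcal H^Q(f(Z))$ by an arbitrarily small quantity.

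\emph{The nondegenerate part.} On $A\setminus Z$ the differential is an injective Carnot homomorphism, so $Df_x(\mathbb G)$ is a $Q$-Ahlfors-regular homogeneous subgroup of $\mathbb H$ and $J_Q(Df_x)>0$. The heart of the proof is a linearization. Using that the Carnot homomorphisms $\mathbb G\to\mathbb H$ with Lipschitz constant at most $\Lip(f)$ form a compact family, together with Egorov's theorem applied to the differentiability \eqref{eqn:Diff} of \autoref{thm:Rademacher}, and fixing a parameter $t>1$, I partition $A\setminus Z$ up to an $\mathcal H^Q$-null set into countably many disjoint measurable pieces $P$, on each of which there is an injective homomorphism $L=L_P$ with
\[
\tfrac1t\,d_{\mathbb H}(L(x'),L(x''))\ \le\ d_{\mathbb H}(f(x'),f(x''))\ \le\ t\,d_{\mathbb H}(L(x'),L(x'')),\qquad x',x''\in P.
\]
In particular $f|_P$ is bi-Lipschitz, hence injective, so $\sharp(f^{-1}(y)\cap(A\setminus Z))=\sum_P\mathbf 1_{f(P)}(y)$. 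Left-invariance and the scaling law $\mathcal H^Q(\delta_\lambda E)=\lambda^Q\mathcal H^Q(E)$ identify the pushforward of $\mathcal H^Q$ under $L$ with a multiple of the restriction of $\mathcal H^Q$ to $L(\mathbb G)$, and evaluating on the unit ball gives the exact identity $\mathcal H^Q(L(E))=J_Q(L)\,\mathcal H^Q(E)$. Combined with the comparison above, this yields $t^{-Q}J_Q(L)\,\mathcal H^Q(E)\le\mathcal H^Q(f(E))\le t^{Q}J_Q(L)\,\mathcal H^Q(E)$ for every measurable $E\subseteq P$; since $J_Q(Df_x)$ stays within a factor $t^Q$ of $J_Q(L)$ on $P$, summing over the pieces, integrating in $\mathcal H^Q$, and letting $t\to1$ while refining the partition gives $\int_{A\setminus Z}J_Q(Df_x)\de\mathcal H^Q(x)=\sum_P\mathcal H^Q(f(P))=\int_{\mathbb H}\sharp(f^{-1}(y)\cap(A\setminus Z))\de\mathcal H^Q(y)$, which together with the degenerate part proves the claim.

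\emph{Main obstacle.} I expect the crux to be the linearization step: promoting the pointwise differentiability of \autoref{thm:Rademacher} to the \emph{uniform, two-sided} bi-Lipschitz comparison with a frozen homomorphism $L$, controlling $f$ simultaneously at pairs of nearby points rather than relative to a single base point, and establishing the exact Jacobian identity $\mathcal H^Q(L(E))=J_Q(L)\,\mathcal H^Q(E)$ for injective $L$. Once these are secured, the covering and limiting arguments are routine, being formally the Euclidean ones with the homogeneity and left-invariance of $\mathcal H^Q$ in place of the usual linear-algebra computations.
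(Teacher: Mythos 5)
The paper offers no proof of this theorem: it is imported verbatim from \cite[Theorem 4.4]{Mag01}, so there is no in-paper argument to compare against. Your outline reproduces the classical Federer scheme — Rademacher via \autoref{thm:Rademacher}, a Sard-type covering estimate killing the set where $Df_x$ is non-injective (where the image subgroup has homogeneous dimension at most $Q-1$, cf.\ \autoref{lem:StrictDimension}), an Egorov/Lusin linearization against a countable dense family of injective homomorphisms, and the Haar-measure identity $\mathcal{H}^Q(L(E))=J_Q(L)\,\mathcal{H}^Q(E)$ — which is essentially the route of \cite{Mag01}, and the step you flag as the crux (the two-sided comparison of $f$ with a frozen homomorphism $L$, which needs the lower bound $d_{\mathbb H}(L(x'),L(x''))\ge c\,d_{\mathbb G}(x',x'')$ valid for injective Carnot homomorphisms to absorb the differentiability error at pairs of points) is indeed where the real work sits there.
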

	
	\subsection{Parametrizations of a $C^1_{\rm H}$-hypersurface and its Hausdorff dimension}
	We give here some definitions about hypersurfaces in Carnot groups. One of our reference is the summary given in \cite[Section 2]{DiDonato18} and references therein, such as \cite{FSSC01, FMS14, FS16}.
	\begin{definition}[$C^1_{\rm H}$-hypersurfaces]\label{def:C1H}
	In a Carnot group $\mathbb G$, with  $\{X_1,\dots,X_m\}$ as basis of horizontal space, a subset $S$ is a {\em $C^1_{\rm H}$-hypersurface} if for all $p\in S$ there exists a neighbourhood $U$ of $p$ and a function $f:U\to \mathbb R$ with $X_1f,\dots,X_mf$ continuous in $U$, such that  
	\begin{equation}\label{rep}
	S\cap U = \{f=0\}\cap U,
	\end{equation}
	and $(X_1f,\dots,X_mf)$ does not vanish on $U$.
	\end{definition} 
	\begin{definition}[Characteristic points]\label{def:CharacteristicSurface}
		Let $\mathbb G$ be a Carnot group. Given $S$ a $C^1$-surface in $\mathbb G\equiv \mathbb R^n$, we say that $x\in S$ is a {\em characteristic point} for $S$ if 
		\begin{equation}\label{eqn:CharacteristicPoint}
		\mathbb V_1(x)\subseteq T_xS,
		\end{equation}
		where $\mathbb V_1(x)$ is the horizontal bundle at $x$ and $T_xS$ is the "Euclidean tangent" of $S$, i.e., the tangent space of $S$ seen as submanifold of $\mathbb G$. We shall use the term "Euclidean" in contrast of the intrinsic sub-Riemannian one. 
		
		We will say that a $C^1$-surface $S$ is {\em non-characteristic} if it does not have characteristic points as in \eqref{eqn:CharacteristicPoint}.
	\end{definition}
	\begin{remark}\label{rem:Characteristic}
	We identify $\mathbb G$ with $\mathbb R^n$ by means of exponential coordinates and we call $m$ the dimension of the first layer.
	If we take $f\in C^{1}(\mathbb G)$ 
	we will denote with $\nabla f|_x$ the {\em full gradient} of $f$ at $x$, i.e., the vector $\sum_{i=1}^{n}(\partial_{x_i}f)(x)\partial_{x_i}|_x$, 
	and with $\nabla_{\setH} f|_x$ the {\em horizontal gradient} of $f$ at $x$, i.e., the vector $\sum_{i=1}^{m}(X_if)(x)X_i|_x$.
	
	If $S$ is a $C^1$-hypersurface in $\mathbb G$, for every point $p\in S$ there exist an open neighbourhood $U_p$ of it and $f\in C^1(\mathbb G)$ such that
	\begin{equation}\label{eqn:Local}
	S\cap U_p= \{x \in U_p : f(x)=0\},
	\end{equation}
	with $\nabla f \neq 0$ on $S\cap U_p$. The Euclidean tangent space of $S$ at an arbitrary point $x\in S\cap U_p$ is
	\begin{equation}\label{DefOfTang}
	T_x S := \{v : \langle v, \nabla f|_x\rangle_x=0 \},
	\end{equation}
	where $\langle\cdot,\cdot\rangle_x$ is the usual inner product, i.e., $\langle \partial_{x_i}|_x, \partial_{x_j}|_x \rangle_x = \delta_{ij}$, and $v=\sum_{i=1}^{n} v_i\partial_{x_i}|_x$. Then $x\in U_p$ is a characteristic point \eqref{eqn:CharacteristicPoint} if and only if (see \eqref{DefOfTang}) it holds that $X_if(x)=0$ for all $i=1,\dots,m$.
	
	Thus a $C^1$-hypersurface $S$ in $\mathbb G$ with non-characteristic points is $C^1_{\rm H}$, because we have the representation in \eqref{eqn:Local} with $(X_1f,\dots,X_mf)\neq 0$ on $U_p$. 
	\end{remark}

    For $C^1_{\rm H}$-hypersurfaces we have a notion of tangent group \cite[page 14]{FSSC03a}. In the following definition we are identifying the group $\mathbb G$ with $\mathbb R^n$ by means of the exponential coordinates and we denote by $\{X_1,\dots,X_m\}$ a basis of the horizontal space.
    \begin{definition}[Tangent group to a $C^1_{\rm H}$-hypersurface]\label{Def:TangentGroup}
    	Given $S$ a $C^1_{\rm H}$-hypersurface, a point $p\in S$ and a representative $f$ around $p$ as in \eqref{rep}, we can define the {\em tangent group}, or the \emph{intrinsic tangent} of $S$ at $p$ as 
    	$$
    	T_p^{\rm I}S:= \left\{v\in\mathbb G\equiv \mathbb R^n: \sum_{i=1}^m v_i X_if(p)=0\right\}.
    	$$
    \end{definition}
    \begin{remark}
    	The tangent group defined in \autoref{Def:TangentGroup} is a subgroup of $\mathbb G$ and  does not depend on the representative $f$ given in \autoref{def:C1H} \cite[page 14]{FSSC03a}. Indeed, we have, see Section \ref{sec:HausdorffTangent}, that
    	$$
    	T_p^{\rm I}S=\lim_{\lambda\to 0}\delta_{\lambda^{-1}}(p^{-1}S).
    	$$
    \end{remark}
	We deal now with the problem of parametrization of a $C^1_{\rm H}$-hypersurface. We first introduce the notion of intrinsic Lipschitz function. See \cite[Section 2]{FMS14}.
	\begin{definition}\label{def:Cone}
		If $\mathbb G$ is a Carnot group and $\mathbb W$ and $\mathbb H$ are homogeneous subgroups such that $\mathbb W\cap \mathbb H=\{0\}$ and 
		$$
		\mathbb G= \mathbb W \cdot \mathbb H,
		$$
		we say that $\mathbb W$ and $\mathbb H$ are {\em complementary subgroups} in $\mathbb G$. We write $p=p_{\mathbb W}\cdot p_{\mathbb H}$, denoting with $p_{\mathbb W}$ and $p_{\mathbb H}$ the projections on the two subgroups.
		
		If $\mathbb W$ and $\mathbb H$ are complementary subgroups in a Carnot group $\mathbb G$, then the {\em cone} $C_{\mathbb W,\mathbb H}(q,\alpha)$ of base $\mathbb W$ and axis $\mathbb H$, centered at $q$ and of opening $0\leq\alpha\leq 1$ is defined as
		$$
		C_{\mathbb W,\mathbb H}(q,\alpha) := q\cdot \{p\in\mathbb G:d(p_{\mathbb W},e)\leq \alpha d(p_{\mathbb H},e) \},
		$$
		and $e$ is the identity element of the group $\mathbb G$.
	\end{definition}
	Given $f:\Omega\subseteq \mathbb W\to \mathbb H$, we define the {\em graph} of $f$ as 
	$$
	\mbox{graph}(f):=\{w\cdot f(w): w\in\Omega\} \subseteq \mathbb G.
	$$
	\begin{definition}\label{def:IntrinsicLipschitz}
		Let us assume $\mathbb W$ and $\mathbb H$ are complementary subgroups in a Carnot group $\mathbb G$. Given $f:\Omega \subset \mathbb W\to \mathbb H$, with $\Omega$ open, we say that $f$ is {\em $L$-intrinsic Lipschitz} in $\Omega$, with $L>0$, if 
		$$
		C_{\mathbb W,\mathbb H}\left(p,\frac{1}{L}\right) \cap \mbox{graph}(f)=\{p\}, \qquad \mbox{for all $p\in\mbox{graph}(f)$},
		$$ 
		where $	C_{\mathbb W,\mathbb H}\left(p,\frac{1}{L}\right)$ is defined in \autoref{def:Cone}.
		
	\end{definition}
	\begin{remark}\label{rem:IntrinsicIsContinuous}
		It is not always true that an intrinsic Lipschitz function is Lipschitz in exponential coordinates. Nevertheless an arbitrary intrinsic Lipschitz function is locally H\"older continuous, see \cite[Proposition 2.3.6]{FMS14}.
	\end{remark}
	
	Now we show that the Hausdorff dimension of $C^1_{\rm H}$-hypersurface in a Carnot group of Hausdorff dimension $Q$ is $Q-1$. We state the following implicit function theorem, which is rather classical in the field. In the reference that we give, there is also a general version for intersection of $C^1_{\rm H}$-hypersurfaces.
	\begin{proposition}\label{prop:C1H=iLip}{\rm\cite[Theorem A.5]{DMV19}}
		If $\mathbb G$ is a Carnot group and $S$ is a $C^1_{\rm H}$-hypersurface, let us call $\mathbb W:= T_p^{\rm I}S$, for $p\in S$, as defined in \autoref{Def:TangentGroup} and let  $\mathbb H$ be a homogeneous horizontal subgroup that is a complementary subgroup to $\mathbb W$. 
		
		Then there exists  an open neihgbourhood $U$ of $p$ and an intrisic Lipschitz function $\phi:\mathbb W\to \mathbb H$ such that 
		$$
		{\rm graph}(\phi)\cap U=S\cap U.
		$$
	 	
	\end{proposition}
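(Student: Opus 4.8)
The plan is to reduce the statement to a one-dimensional implicit function theorem along the horizontal direction spanned by $\mathbb H$, and then to extract the intrinsic Lipschitz bound from the non-vanishing of the horizontal gradient through a cone estimate.

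First I would normalize. By left-invariance we may assume $p=e$ and $f(e)=0$, and by continuity we fix a neighbourhood $U'$ of $e$ on which $(X_1f,\dots,X_mf)$ stays close to its value at $e$. Since this horizontal gradient does not vanish, we may choose the orthonormal horizontal frame so that $(X_2f,\dots,X_mf)(e)=0$ and $X_1f(e)>0$; then $\mathbb W=T_e^{\rm I}S=\{v\in\mathbb G:v_1=0\}$, the codimension-one homogeneous normal subgroup (it is a subalgebra since it contains $\Vv_2\p\dots\p\Vv_s$ together with a hyperplane of $\Vv_1$). The hypothesis supplies a complementary \emph{horizontal} subgroup $\mathbb H$; being horizontal and complementary to a codimension-one subgroup, it is one-dimensional, $\mathbb H=\{\exp(tY):t\in\mathbb R\}$ with $Y\in\Vv_1$ and $Y_1\neq 0$ (else $Y\in\mathbb W$). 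Thus $\mathbb G=\mathbb W\cdot\mathbb H$ with unique continuous projections.

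Next I would produce the graph. For $w\in\mathbb W$ near $e$ set $g_w(t):=f(w\cdot\exp(tY))$; since $\exp(tY)$ traces a horizontal line, $g_w$ is $C^1$ in $t$ with $g_w'(t)=(Yf)(w\exp(tY))=\sum_i Y_i(X_if)(w\exp(tY))$, and at $t=0$, $w=e$ this equals $Y_1\,(X_1f)(e)\neq0$. Shrinking $U'$ so that $Yf$ is bounded away from $0$ there, each $g_w$ is strictly monotone and has a unique zero $t(w)$, and $w\mapsto\phi(w):=\exp(t(w)Y)\in\mathbb H$ is continuous by the one-dimensional implicit function theorem (which needs only continuity of $f$ and of $Yf$). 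Because the product map $\mathbb W\times\mathbb H\to\mathbb G$ is a diffeomorphism near $e$, every point of a suitable neighbourhood $U$ is uniquely $w\exp(sY)$, and $f=0$ there forces $s=t(w)$; hence $\mathrm{graph}(\phi)\cap U=\{f=0\}\cap U=S\cap U$.

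It remains to show $\phi$ is intrinsic Lipschitz, which is the heart of the matter. Equivalently I must find $L$ with $d(q_{\mathbb H},e)\le L\,d(q_{\mathbb W},e)$ for every $q=p_1^{-1}p_2$ with $p_1,p_2\in\mathrm{graph}(\phi)\cap U$. Writing $q=q_{\mathbb W}\cdot q_{\mathbb H}$ with $q_{\mathbb H}=\exp(\tau Y)$ and using $f(p_1)=f(p_2)=0$, I split
$$0=\big(f(p_1q_{\mathbb W}q_{\mathbb H})-f(p_1q_{\mathbb W})\big)+\big(f(p_1q_{\mathbb W})-f(p_1)\big)=:A+B.$$
Along $\mathbb H$ one has $A=\int_0^{\tau}(Yf)(p_1q_{\mathbb W}\exp(sY))\,\de s$, so the lower bound on $Yf$ gives $\abs{A}\ge c_1\abs{\tau}\ge c_1'\,d(q_{\mathbb H},e)$, using $d(\exp(\tau Y),e)=\abs{\tau}\,d(\exp(Y),e)$. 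For $B$ I would use that a function with continuous bounded horizontal gradient is Lipschitz for the Carnot--Carathéodory distance (integrate $X_if$ along a horizontal curve of almost minimal length), obtaining $\abs{B}\le M\,d_{cc}(p_1,p_1q_{\mathbb W})=M\,d(q_{\mathbb W},e)$ with $M:=\sup_{U'}\abs{(X_1f,\dots,X_mf)}$. Since $A+B=0$, these combine to $d(q_{\mathbb H},e)\le(M/c_1')\,d(q_{\mathbb W},e)$, i.e.\ the cone condition with $L=M/c_1'$; as all homogeneous distances are equivalent, this is intrinsic Lipschitzness.

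The main obstacle is precisely this last estimate: $q_{\mathbb W}$ may have nontrivial components in the higher strata, so the increment $B$ must be controlled by the \emph{homogeneous} distance $d(q_{\mathbb W},e)$ rather than by any Euclidean quantity. This is exactly where $C^1_{\rm H}$ regularity must be upgraded to genuine $d_{cc}$-Lipschitzness, and where the interplay between the coercivity of $Yf$ along the complementary horizontal line and the boundedness of the full horizontal gradient is essential. A secondary technical point, settled by taking $U$ well inside $U'$, is to ensure that the horizontal segments $s\mapsto p_1q_{\mathbb W}\exp(sY)$ entering $A$ remain in the region where $Yf$ is bounded below.
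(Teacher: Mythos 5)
The paper does not prove this proposition; it is quoted verbatim from \cite[Theorem A.5]{DMV19}, so there is no internal argument to compare against. Your proof is correct and is, in substance, the standard argument behind that cited result: the Franchi--Serapioni--Serra Cassano implicit function theorem produces the continuous parametrization $\phi$, and the cone condition follows from splitting the increment of $f$ along the decomposition $q=q_{\mathbb W}\cdot q_{\mathbb H}$, bounding the $\mathbb H$-part from below by the coercivity of $Yf$ and the $\mathbb W$-part from above by the horizontal Lipschitz bound on $f$. The one ingredient you invoke without proof --- that a $C^1_{\rm H}$ function is locally Lipschitz with respect to $d_{cc}$ and satisfies the fundamental theorem of calculus along horizontal lines (needed both for the $C^1$ regularity of $t\mapsto f(w\exp(tY))$ and for the estimate on $B$) --- is genuinely needed and is where the regularity hypothesis enters; it is standard and follows from the group-mollification argument you sketch, so I would not count it as a gap, but in a complete write-up it should be stated and justified explicitly. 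The only other (cosmetic) point is that your non-strict inequality $d(q_{\mathbb H},e)\le L\,d(q_{\mathbb W},e)$ yields the cone condition of \autoref{def:IntrinsicLipschitz} only after doubling the constant, since the boundary case of equality must be excluded.
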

	\begin{remark}
		By \autoref{Def:TangentGroup} it is always true that $T_p^{\rm I}S$ has a homogeneous horizontal complement $\mathbb H$. Indeed, in exponential coordinates, it suffices to take $\mathbb H := \{(tX_1f(p),\dots,tX_mf(p),0,\dots,0)\}_{t\in\mathbb R}$. 
	\end{remark} 
	The following proposition clarifies what is the Hausdorff dimension of the graph of an intrinsic Lipschitz function.
	\begin{proposition}\label{prop:dimiLip}{\rm \cite[Proposition 2.3.7]{FMS14}}
		Let $\mathbb G$ be a Carnot group and let $\mathbb W$ and $\mathbb H$ be two complementary subgroups in $\mathbb G$. Let us denote $k_{\mathbb W}$ the Hausdorff dimension of $\mathbb W$ and let $\phi:\mathbb W\to \mathbb H$ be an intrinsic $L$-Lipschitz function.
		Then there exist $c_0=c_0(\mathbb G)>0$ and $c_1=c_1(\mathbb W,\mathbb H)>0$ constants such that 
		$$
		\left(\frac{c_0}{1+L}\right)^{k_{\mathbb W}} R^{k_{\mathbb W}} \leq \mathcal{S}^{k_{\mathbb W}} \left({\rm graph}(f)\cap B(p,R)\right) \leq c_1(1+L)^{k_{\mathbb W}}R^{k_{\mathbb W}},
		$$
		for all $p\in{\rm graph}(f)$, $R>0$. 
	\end{proposition}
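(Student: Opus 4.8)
Set $k:=k_{\mathbb W}$ and write $\|g\|:=d(g,e)$ for the homogeneous norm of our fixed left-invariant homogeneous distance. I would first reduce to the unit scale and to a normalized function. Left translations are $d$-isometries and preserve $\mathcal S^k$, while $\delta_\lambda$ scales $d$ by $\lambda$ and hence $\mathcal S^k$ by $\lambda^k$. The class of intrinsic $L$-Lipschitz graphs over $\mathbb W$ is invariant under both operations: the cones $C_{\mathbb W,\mathbb H}(\cdot,1/L)$ are left translates of one another and are $\delta_\lambda$-homogeneous, so for any $p\in\mbox{graph}(\phi)$ the set $\delta_{1/R}\bigl(p^{-1}\mbox{graph}(\phi)\bigr)$ is again the graph of an intrinsic $L$-Lipschitz function $\psi\colon\mathbb W\to\mathbb H$, now with $\psi(0)=e$. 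Since $\delta_{1/R}(p^{-1}(\mbox{graph}(\phi)\cap B(p,R)))=\mbox{graph}(\psi)\cap B(e,1)$, one gets
$$\mathcal S^k\bigl(\mbox{graph}(\phi)\cap B(p,R)\bigr)=R^{k}\,\mathcal S^k\bigl(\mbox{graph}(\psi)\cap B(e,1)\bigr).$$
It therefore suffices to bound $\mathcal S^k(\mbox{graph}(\psi)\cap B(e,1))$ below by $(c_0/(1+L))^{k}$ and above by $c_1(1+L)^{k}$, uniformly over all intrinsic $L$-Lipschitz $\psi$ with $\psi(0)=e$.

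Write $\Phi(w):=w\cdot\psi(w)$, so that $\mbox{graph}(\psi)=\Phi(\mathbb W)$, $\Phi(0)=e$, and $\pi_{\mathbb W}(\Phi(w))=w$ because $w\in\mathbb W$ and $\psi(w)\in\mathbb H$. The key tool is the cone estimate read off from \autoref{def:IntrinsicLipschitz}: for $q_1,q_2\in\mbox{graph}(\psi)$, writing $g:=q_1^{-1}q_2=g_{\mathbb W}\cdot g_{\mathbb H}$, the fact that $q_2\notin C_{\mathbb W,\mathbb H}(q_1,1/L)$ unless $q_2=q_1$ forces $\|g_{\mathbb H}\|\le L\|g_{\mathbb W}\|$, whence $\|g\|\le\|g_{\mathbb W}\|+\|g_{\mathbb H}\|\le(1+L)\|g_{\mathbb W}\|$ by the triangle inequality and left-invariance. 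Conversely, $\pi_{\mathbb W}$ is continuous and $\delta_\lambda$-homogeneous (both $\mathbb W$ and $\mathbb H$ are homogeneous subgroups), so a compactness argument on the unit sphere yields $\|g_{\mathbb W}\|\le C\|g\|$ with $C=C(\mathbb W,\mathbb H)$. Applying both inequalities to the pair $(e,\Phi(w))$, for which $g_{\mathbb W}=w$, gives the set inclusions
$$\Phi\bigl(B^{\mathbb W}(0,\tfrac{1}{1+L})\bigr)\subseteq\mbox{graph}(\psi)\cap B(e,1)\subseteq\Phi\bigl(B^{\mathbb W}(0,C)\bigr),$$
where $B^{\mathbb W}$ denotes balls of the homogeneous subgroup $\mathbb W$, which carries as Haar measure the restriction of $\mathcal S^k$ and is Ahlfors $k$-regular, $\mathcal S^k(B^{\mathbb W}(0,\rho))=c_{\mathbb W}\rho^{k}$ for some $c_{\mathbb W}=c_{\mathbb W}(\mathbb W,\mathbb H)\in(0,\infty)$.

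It remains to turn these inclusions into measure estimates, i.e.\ to prove that the model metric $D(w,w'):=d(\Phi(w),\Phi(w'))$ on $\mathbb W$ is Ahlfors $k$-regular. For the upper bound I would cover $\overline B^{\mathbb W}(0,C)$ by $N\lesssim(C/\delta)^{k}$ balls $B^{\mathbb W}(w_j,\delta)$, control the $d$-diameter of each image $\Phi(B^{\mathbb W}(w_j,\delta))$ by $\lesssim(1+L)\delta$ through the cone estimate, deduce $\mathcal S^k(\mbox{graph}(\psi)\cap B(e,1))\le\sum_j(\mbox{diam})^{k}\lesssim(C/\delta)^{k}((1+L)\delta)^{k}$, and let $\delta\to0$. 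For the lower bound I would use a mass-distribution argument with the pushforward $\mu:=\Phi_*\nu$, where $\nu$ is the restriction of $\mathcal S^k$ to $B^{\mathbb W}(0,1/(1+L))$; this $\mu$ has total mass $c_{\mathbb W}(1+L)^{-k}$, is supported in $\mbox{graph}(\psi)\cap B(e,1)$ by the left inclusion above, and once one checks $\mu(B(x,r))\lesssim r^{k}$ the mass distribution principle gives the desired lower estimate.

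The main obstacle, in both directions, is exactly the diameter/ball estimate hidden in the last paragraph. The cone estimate controls $d(\Phi(w),\Phi(w'))$ by $(1+L)\,\|(\Phi(w)^{-1}\Phi(w'))_{\mathbb W}\|$, but this involves the $\mathbb W$-projection of $\Phi(w)^{-1}\Phi(w')$ rather than $w^{-1}w'$, and away from the basepoint these are not comparable to the homogeneous distance of $\mathbb W$ with a constant independent of the point. This reflects the fact that the complementary projection $\pi_{\mathbb W}$ is in general not metrically Lipschitz and that intrinsic Lipschitz functions are only locally H\"older in exponential coordinates (\autoref{rem:IntrinsicIsContinuous}); establishing the uniform Ahlfors $k$-regularity of $(\mathbb W,D)$ with the stated dependence on $L$ is therefore the technical heart of the argument, and is precisely what is carried out within the theory of intrinsic Lipschitz functions in \cite{FMS14,FS16}.
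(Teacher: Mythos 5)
The paper does not actually prove this proposition: it is imported verbatim from \cite[Proposition 2.3.7]{FMS14}, so there is no internal proof to compare yours against. That said, the parts of your outline that you do carry out are correct and are the standard opening moves: the invariance of the class of intrinsic $L$-Lipschitz graphs under left translations by graph points and under dilations, the resulting reduction to the unit ball centered at $e$ with $\psi(0)=e$, the two-sided comparison $\frac{1}{C}\,\|\pi_{\mathbb W}(q_1^{-1}q_2)\|\le d(q_1,q_2)\le(1+L)\,\|\pi_{\mathbb W}(q_1^{-1}q_2)\|$ for pairs of graph points, and the two set inclusions $\Phi\bigl(B^{\mathbb W}(0,\tfrac{1}{1+L})\bigr)\subseteq\mathrm{graph}(\psi)\cap B(e,1)\subseteq\Phi\bigl(B^{\mathbb W}(0,C)\bigr)$.

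The gap you flag at the end is genuine, and it is the whole content of the proposition, so the proposal cannot stand as a proof. Your upper-bound route in particular would fail as written: bounding $\mathrm{diam}_d\,\Phi(B^{\mathbb W}(w_j,\delta))$ by $(1+L)\delta$ requires comparing $\|\pi_{\mathbb W}(\Phi(w)^{-1}\Phi(w'))\|$ with $d_{\mathbb W}(w,w')$ at basepoints $w_j\neq 0$; since $\Phi(w)^{-1}\Phi(w')=\psi(w)^{-1}(w^{-1}w')\psi(w')$, its $\mathbb W$-component is (when $\mathbb W$ is normal, as in all applications in this paper) the conjugate of $w^{-1}w'$ by $\psi(w)^{-1}$, and there is no bound on that conjugation uniform in $w$ — this is the same phenomenon as \autoref{rem:IntrinsicIsContinuous}, namely that $\psi$ need not be metrically Lipschitz, so images of small intrinsic balls of $\mathbb W$ may have large $d$-diameter and the covering sum does not close. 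The argument in \cite{FMS14} (see also \cite{FS16}) replaces every pointwise diameter estimate by a single measure-theoretic lemma: the Haar measure in $\mathbb W$ of $\pi_{\mathbb W}(B(q,r))$ equals $c(\mathbb W,\mathbb H)\,r^{k_{\mathbb W}}$ for \emph{every} center $q$, not just $q=e$. With that lemma, the pushforward $\mu:=\Phi_{\#}\bigl(\mathcal S^{k_{\mathbb W}}\res\mathbb W\bigr)$ satisfies $\mu(B(q,r))\le\mathcal S^{k_{\mathbb W}}(\pi_{\mathbb W}(B(q,r)))\le c\,r^{k_{\mathbb W}}$ (giving the lower bound on $\mathcal S^{k_{\mathbb W}}$ of the graph by the mass distribution principle, exactly as you propose) and, using the cone estimate together with a continuity/surjectivity argument for $w\mapsto\pi_{\mathbb W}(q^{-1}\Phi(w))$, also $\mu(B(q,r))\ge c\,(r/(1+L))^{k_{\mathbb W}}$ for $q$ on the graph (giving the upper bound by a Vitali covering argument). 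So the single missing ingredient in both directions is the center-independent measure-of-projections lemma; without it, your outline is a correct skeleton that, as you yourself acknowledge, defers the technical heart of the proof to the reference rather than supplying it.
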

	\begin{remark}\label{rem:EquivalentDistances}
		From \autoref{prop:dimiLip} we can deduce that the spherical Hausdorff measure $\mathcal{S}^{k_{\mathbb W}}$, restricted to $\mbox{graph}(f)$, is $k_{\mathbb W}$-Alfhors-regular and therefore doubling. We also infer that the Hausdorff dimension of $\mbox{graph}(f)$ is $k_{\mathbb W}$. 
	\end{remark}
	The next basic result can be seen as a consequence of \cite[Theorem 4.4, (iii)]{LDNG19}, where the authors proved a more general result holding for every self-similar metric Lie group. 
	\begin{lemma}\label{lem:HausdorffQ-1}
	In every Carnot group a vertical subgroup of codimension 1 has Hausdorff codimension 1.
	\end{lemma}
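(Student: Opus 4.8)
The plan is to realize the subgroup at the level of the Lie algebra, read off the weights of the restricted dilations, and then identify the Hausdorff dimension with the sum of these weights via the self-similarity of the induced metric structure.

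First I would fix exponential coordinates and describe the subgroup. A vertical subgroup $\mathbb W$ of codimension one corresponds to a subalgebra of the form $\mathfrak w = W_1 \oplus V_2 \oplus \cdots \oplus V_s$, where $W_1 \subset V_1$ is a hyperplane, so that $\dim W_1 = m-1$. One checks at once that $\mathfrak w$ is a subalgebra: it contains the ideal $V_2 \oplus \cdots \oplus V_s$, and $[W_1,W_1] \subseteq V_2$, $[W_1,V_i] \subseteq V_{i+1}$, $[V_i,V_j]\subseteq V_{i+j}$ all lie in $\mathfrak w$. Moreover $\mathfrak w$ is invariant under the algebra dilations $\delta_\lambda$, since each summand is; hence $\mathbb W = \exp(\mathfrak w)$ is a homogeneous subgroup and the restriction of $\delta_\lambda$ to $\mathbb W$ is again a one-parameter family of group dilations.

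Next I would compute the homogeneous dimension of $\mathbb W$. The restricted dilations act with weight $1$ on the $(m-1)$-dimensional space $W_1$ and with weight $i$ on each $V_i$ for $2\leq i\leq s$, so the sum of the weights is $(m-1)+\sum_{i=2}^s i\dim V_i = \big(\sum_{i=1}^s i\dim V_i\big)-1 = Q-1$. The point to stress is that $\mathbb W$ need \emph{not} be a Carnot group — its first layer $W_1$ in general does not generate $\mathfrak w$ — so one cannot invoke the Carnot-group formula for the Hausdorff dimension directly, and must argue through the more general theory of self-similar metric Lie groups. This is the only genuinely delicate point; everything else is bookkeeping.

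Finally I would conclude that $\dim_H(\mathbb W, d|_{\mathbb W}) = Q-1$. The quickest route is to invoke \cite[Theorem 4.4, (iii)]{LDNG19}, which computes the Hausdorff dimension of a self-similar metric Lie group as the sum of its dilation weights. Alternatively one can argue directly: the left Haar measure $\mu$ of $\mathbb W$ (the Lebesgue measure on $\mathfrak w$ in exponential coordinates) satisfies $\mu(\delta_\lambda A)=\lambda^{Q-1}\mu(A)$, since the Jacobian of $\delta_\lambda|_{\mathfrak w}$ equals $\lambda$ raised to the sum of the weights, namely $\lambda^{Q-1}$. Combined with left-invariance of $\mu$ and the homogeneity $d(\delta_\lambda p,\delta_\lambda q)=\lambda\, d(p,q)$ of the restricted distance, this yields $\mu(B^{d}(x,r))=c\,r^{Q-1}$ for all $x\in\mathbb W$ and $r>0$, i.e.\ $(\mathbb W, d|_{\mathbb W},\mu)$ is Ahlfors $(Q-1)$-regular. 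Ahlfors $(Q-1)$-regularity forces $\dim_H \mathbb W = Q-1$, and since $\dim_H \mathbb G = Q$, this is precisely Hausdorff codimension one.
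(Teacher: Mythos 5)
Your proof is correct and follows the same route as the paper, which states this lemma without proof as a consequence of \cite[Theorem 4.4, (iii)]{LDNG19}; your identification of the subgroup as $\exp(W_1\oplus V_2\oplus\cdots\oplus V_s)$, the weight count giving $Q-1$, and the observation that $\mathbb W$ need not be a Carnot group (so the self-similar metric Lie group framework, or equivalently your direct Ahlfors-regularity argument via the scaling of Haar measure, is genuinely needed) supply exactly the details the paper leaves implicit.
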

	Now we are in position to state and prove the following
	\begin{proposition}\label{prop:C1HisQ-1}
		Let $\mathbb G$ be a Carnot group of Hausdorff dimension $Q$. Let $S$ be a $C^1_{\rm H}$-hypersurface of $\mathbb G$.
		Then the Hausdorff dimension of $S$ is $Q-1$ and $\mathcal{H}^{Q-1}$ restricted to $S$ is a locally doubling measure on $S$.
	\end{proposition}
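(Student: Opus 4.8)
The plan is to reduce the statement to the two structural facts about intrinsic Lipschitz graphs recalled above, namely \autoref{prop:C1H=iLip} and \autoref{prop:dimiLip}, together with the dimension computation for vertical subgroups in \autoref{lem:HausdorffQ-1}. Both assertions are local in nature, so I would fix an arbitrary $p\in S$ and work near $p$. By \autoref{prop:C1H=iLip}, setting $\mathbb{W}:=T_p^{\rm I}S$ and choosing a homogeneous horizontal complement $\mathbb{H}$, there is an open neighbourhood $U$ of $p$ and an intrinsic $L$-Lipschitz map $\phi\colon\mathbb{W}\to\mathbb{H}$ with $\mathrm{graph}(\phi)\cap U=S\cap U$. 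Thus, near $p$, the hypersurface coincides with an intrinsic Lipschitz graph over $\mathbb{W}$.

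Next I would identify the Hausdorff dimension $k_{\mathbb{W}}$ of the base subgroup. By \autoref{Def:TangentGroup} the subgroup $\mathbb{W}$ is cut out by the single linear condition $\sum_{i=1}^m v_iX_if(p)=0$, which constrains only the horizontal coordinates $v_1,\dots,v_m$. Since $(X_1f(p),\dots,X_mf(p))\neq 0$ by \autoref{def:C1H}, this functional is nonzero, so $\mathbb{W}$ meets the first stratum in a hyperplane and contains all the higher strata $\Vv_2\p\dots\p\Vv_s$. Hence $\mathbb{W}$ is a vertical subgroup of $\mathbb{G}$ of topological codimension $1$, and \autoref{lem:HausdorffQ-1} gives $k_{\mathbb{W}}=Q-1$.

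I would then read off the conclusion from \autoref{prop:dimiLip} and \autoref{rem:EquivalentDistances}: the graph of $\phi$ has Hausdorff dimension $k_{\mathbb{W}}=Q-1$, and $\mathcal{S}^{Q-1}$ restricted to $\mathrm{graph}(\phi)$ is $(Q-1)$-Ahlfors regular with respect to the distance induced from $\mathbb{G}$. For a ball $B(p',R)\subseteq U$ centred at $p'\in S\cap U$ one has $B(p',R)\cap S=B(p',R)\cap\mathrm{graph}(\phi)$, so these estimates transfer verbatim to $S$ on a neighbourhood of $p$; in particular $\dim_H S=Q-1$ there and $\mathcal{S}^{Q-1}$ restricted to $S$ is locally doubling. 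Since in any metric space $\mathcal{H}^{Q-1}\le\mathcal{S}^{Q-1}\le 2^{Q-1}\mathcal{H}^{Q-1}$, the measure $\mathcal{H}^{Q-1}$ restricted to $S$ is comparable to $\mathcal{S}^{Q-1}$ restricted to $S$ and hence also locally doubling. Running this argument over every $p\in S$ yields the global conclusions.

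The only genuinely delicate point is bookkeeping about distances and neighbourhoods: one must be careful that the Ahlfors-regularity estimate of \autoref{prop:dimiLip} is stated with respect to the balls of $\mathbb{G}$ (hence the \emph{induced} distance on the graph), and check that for small radii these balls meet $S$ and $\mathrm{graph}(\phi)$ in the same set, so that the graph estimates really control $S$ with its induced metric. Once this localization is in place, the identification of $\mathbb{W}$ as a vertical codimension-$1$ subgroup and the passage from $\mathcal{S}^{Q-1}$ to $\mathcal{H}^{Q-1}$ are routine.
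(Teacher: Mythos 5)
Your proposal is correct and follows essentially the same route as the paper's proof: localize via \autoref{prop:C1H=iLip} to an intrinsic Lipschitz graph over $\mathbb{W}=T_p^{\rm I}S$, identify $k_{\mathbb{W}}=Q-1$ by \autoref{lem:HausdorffQ-1}, and conclude by the Ahlfors-regularity of \autoref{prop:dimiLip}. Your added care about the comparability of $\mathcal{S}^{Q-1}$ and $\mathcal{H}^{Q-1}$ and the localization of balls only makes explicit what the paper leaves implicit.
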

	\begin{proof}
		In order to prove the theorem we have to take into account \autoref{prop:C1H=iLip} according to which, locally around any $p\in S$, we can write $S$ as the graph of an intrinsic Lipschitz function. After that by using \autoref{prop:dimiLip} together with \autoref{lem:HausdorffQ-1}, we get that the Hausdorff measure $\mathcal{H}^{Q-1}$ with respect to $d$ is locally finite and positive on $S$ and also it is locally $(Q-1)$-Alfhors-regular. From this the conclusion follows immediately.
	\end{proof}
	
	\subsection{The tangent group of a $C^1_{\rm H}$-hypersurface as the Hausdorff tangent}\label{sec:HausdorffTangent}
	In this subsection we remind that the tangent group $T_p^{\rm I}S$ at $p\in S$ of an arbitrary $C^1_{\rm H}$-hypersurface (see \autoref{Def:TangentGroup}) is the Hausdorff tangent at $p$ to $S$.
	This follows from \cite[Theorem 3.1.1]{Koz15} and the identification between the kernel of the differential of a $C^1_{\rm H}$-function $f$ with the tangent of the surface defined by $f$, see \cite[Proposition~2.11]{FSSC03a}.
	 
	The result that we state here is simplified for our aims. For the general statement we refer to \cite[Theorem 3.1.1]{Koz15}. 
	\begin{proposition}\label{prop:GHTangent}{\em [Kozhevnikov]}
		Let $S$ be a $C^1_{\rm H}$-hypersurface in a Carnot group $\mathbb G$. Let us consider $p\in S$ and a function $f$ whose 0-level set coincide locally with $S$, as in \eqref{rep}.
		Then we have that there exists $\beta:(0,+\infty)\to(0,+\infty)$, with $\beta(t)\to0^+$ if $t\to 0^+$, such that for all $r>0$
		$$
		d_H\left(B(p,r)\cap S, B(p,r)\cap (p\cdot T_p^{\rm I}S)\right) \leq \beta(r)r,
		$$
		where $T_p^{\rm I}S$ is defined in \autoref{Def:TangentGroup}.
	\end{proposition}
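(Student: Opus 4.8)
My proposed proof proceeds by a direct two-sided Hausdorff estimate read off from the defining function $f$; the essential point is that continuity of $X_1f,\dots,X_mf$ yields a modulus of continuity that vanishes at $p$ and governs the blow-up rate $\beta$.

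\emph{Reduction and setup.} First I would reduce to $p=e$. Replacing $f$ by $\tilde f:=f\circ L_p$ and $S$ by $p^{-1}S$, left-invariance of $d$ makes $d_H$ invariant under $L_{p^{-1}}$, and left-invariance of the fields $X_i$ gives $(X_i\tilde f)(e)=(X_if)(p)$, whence $T_e^{\rm I}\tilde S=T_p^{\rm I}S$. So it suffices to prove the estimate at the identity with $f(e)=0$. Write $c:=((X_1f)(e),\dots,(X_mf)(e))\neq 0$, set $V:=T_e^{\rm I}S=\{v:\sum_i v_ic_i=0\}$, let $H_0:=\sum_i c_iX_i$ be the horizontal-gradient direction and $\mathbb H:=\{\exp(sH_0):s\in\mathbb R\}$ the horizontal line it generates. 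Then $V$ and $\mathbb H$ are complementary subgroups. Let $\omega(\rho):=\sup\{|(X_if)(x)-c_i|:d(x,e)\le\rho,\ 1\le i\le m\}$, which tends to $0$ as $\rho\to 0^+$ by continuity of the $X_if$.

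\emph{The key monotonicity.} Along the horizontal curves $\gamma(s):=q\cdot\exp(sH_0)$, left-invariance of the $X_i$ gives
$$
\frac{\de}{\de s}\,f(\gamma(s))=(H_0f)(\gamma(s))=\sum_i c_i\,(X_if)(\gamma(s))=|c|^2+\sum_i c_i\big[(X_if)(\gamma(s))-c_i\big],
$$
so whenever $\gamma(s)\in B(e,\rho)$ this derivative is $\ge |c|^2-|c|\sqrt m\,\omega(\rho)\ge |c|^2/2>0$ for $\rho$ small. Thus $f$ is strictly monotone in the transversal direction $H_0$. In the tangential direction I get smallness of $f$ on $V$: joining $e$ to $v\in V\cap B(e,r)$ by a horizontal curve $\sigma\subseteq V$ of length $\ell\le Cr$, its velocities are $\sum_i a_i(s)X_i$ with $\sum_i a_i(s)c_i=0$, so $\frac{\de}{\de s}f(\sigma(s))=\sum_i a_i(s)[(X_if)(\sigma(s))-c_i]$ has absolute value $\le\sqrt m\,\omega(Cr)$; integrating gives $|f(v)|\le C\,\omega(Cr)\,r$.

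\emph{Both inclusions.} For $q\in S\cap B(e,r)$, write $q=q_V\cdot\exp(tH_0)$ via $\mathbb G=V\cdot\mathbb H$; continuity and $0$-homogeneity of the two projections give $d(q_V,e),\,d(\exp(tH_0),e)\le Cr$, so $\gamma(s)=q_V\exp(sH_0)$ stays in a small ball for $s\in[0,t]$. From $0=f(q)=f(q_V)+\int_0^t (H_0f)(q_V\exp(sH_0))\,\de s$ and the two estimates above, $\tfrac{|c|^2}{2}|t|\le|f(q_V)|\le C\omega(Cr)r$, hence $d(q,V)\le d(q,q_V)=d(\exp(tH_0),e)\le C'\omega(Cr)r$. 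Conversely, for $v\in V\cap B(e,r)$ the monotonicity of $s\mapsto f(v\exp(sH_0))$ together with $|f(v)|\le C\omega(Cr)r$ produces, by the intermediate value theorem, a unique small $s^*$ with $v\exp(s^*H_0)\in S$ and $|s^*|\le \tfrac{2}{|c|^2}|f(v)|\le C\omega(Cr)r$, so $d(v,S)\le C'\omega(Cr)r$. Setting $\beta(r):=C'\omega(Cr)$ yields the claim.

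\emph{Main obstacle and an alternative.} The genuinely delicate point is boundary behaviour at $\partial B(e,r)$: the approximating points I construct land in the slightly larger ball $B\big(e,(1+\beta(r))r\big)$, so comparing $S\cap B(e,r)$ with $V\cap B(e,r)$ for the \emph{same} ball requires a routine two-ball (or radius-shrinking) adjustment, easily overlooked. The remaining technicalities—continuity and homogeneity of the complementary projections, keeping $\gamma$ inside the neighbourhood where $\{f=0\}=S$, and comparing Carnot–Carathéodory length with the homogeneous distance along the short arcs—are standard. Alternatively, one may bypass the computation entirely and invoke \cite[Theorem 3.1.1]{Koz15} together with the identification of the kernel of the horizontal differential of $f$ with $T_p^{\rm I}S$ from \cite[Proposition 2.11]{FSSC03a}, which is the route indicated in the text.
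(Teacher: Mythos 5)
Your direct argument is genuinely different from what the paper does: the paper offers no proof of \autoref{prop:GHTangent} at all, attributing the result to Kozhevnikov and deducing it from \cite[Theorem 3.1.1]{Koz15} together with the identification of the kernel of the horizontal differential with $T_p^{\rm I}S$ from \cite[Proposition 2.11]{FSSC03a} --- this is exactly the ``alternative'' you mention in your last sentence. Your quantitative implicit-function-theorem strategy (transversal monotonicity of $f$ along the flow of $H_0=\sum_i c_iX_i$, plus smallness of $f$ on the tangent subgroup, both controlled by the modulus of continuity $\omega$ of the horizontal gradient) is the standard and correct way to prove the statement from scratch, and the steps you flag as delicate but routine (the two-ball adjustment at $\partial B(e,r)$, continuity and homogeneity of the projections onto complementary subgroups, differentiability of a $C^1_{\rm H}$ function along horizontal curves) really are routine.

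There is, however, one genuine gap, in the tangential estimate. You propose to join $e$ to $v\in V\cap B(e,r)$ by a horizontal curve $\sigma$ \emph{contained in} $V$. Such a curve need not exist: the distribution induced on the vertical subgroup $V$ by the ambient horizontal bundle is $V_1\cap\ker c$, and this need not bracket-generate the Lie algebra of $V$. Already in $\mathbb H^1$ a vertical plane $V=\{v_1=0\}$ is abelian, the induced horizontal directions span a single coordinate line, and no point $(0,0,t)$ with $t\neq0$ is reachable from $e$ by a horizontal curve inside $V$; so for such $v$ your integration along $\sigma$ has nothing to integrate along. The estimate $|f(v)|\le C\,\omega(Cr)\,r$ is nevertheless true, and the fix is to drop the requirement $\sigma\subseteq V$: connect $e$ to $v$ by a horizontal curve $\gamma$ in the ambient group $\mathbb G$, of length at most $C\|v\|\le Cr$ and staying in $B(e,Cr)$ (possible by the definition of the Carnot--Carath\'eodory distance and the equivalence of homogeneous distances), with controls $a_i\in L^1(I)$. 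Since in exponential coordinates $X_i=\partial_{x_i}+r_i(x)$ with $r_i$ involving only higher-layer directions (see \autoref{rem:Partial}), one has $\int_I a_i(s)\de s=v_i$ for $1\le i\le m$, hence
$$
f(v)=\sum_{i=1}^m c_i v_i+\int_I\sum_{i=1}^m a_i(s)\bigl[(X_if)(\gamma(s))-c_i\bigr]\de s,
$$
where the first term vanishes because $v\in V$ and the second is bounded by $\sqrt m\,\omega(Cr)\cdot{\rm length}(\gamma)\le C\,\omega(Cr)\,r$. With this replacement the rest of your argument --- the bound $|f(q_V)|\le C\omega(Cr)r$ for $q\in S\cap B(e,r)$, the intermediate value step producing $s^*$, and the two resulting inclusions --- goes through.
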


\begin{remark}\label{rem:GHtangent}
	From the result in \autoref{prop:GHTangent} 
	we eventually get also that if we consider the metric space $(S,d)$, the Gromov-Hausdorff tangent at any point $p\in S$ is (isometric to) $(T_p^{\rm I}S,d)$. 
\end{remark}

	We give here part of the statement of \cite[Theorem 3.3.1]{Koz15}, because it is useful for our aims. For the complete theorem one can see the reference.
	\begin{definition}\label{def:Tan+}{\rm [$\mbox{Tan}^+_{\mathbb G}(S,a)$]}
		Given a subset $S$ of a Carnot group $\mathbb G$ and $a\in S$, we say that $v\in \mathbb G$ is an element of $\mbox{Tan}^+_{\mathbb G}(S,a)$ if there exist a positive sequence $\{t_m\}$ with $t_m\to 0$ and a sequence $\{a_m\}$ of elements of $S$ such that 
		$$
		\lim_{m\to+\infty} d(a_m,a) = 0, \qquad \lim_{m\to+\infty} {\delta}_{1/t_m}(a^{-1}a_m)=v.
		$$
	\end{definition}
	\begin{lemma}\label{lem:Tan+}{\em [Kozhevnikov]}
		Let $S$ be a closed set of a Carnot group $\mathbb G$, and let $a\in S$. If there exists a closed homogeneous set $W$ such that
		$$
		r^{-1}d_H(B(a,r)\cap S, B(a,r)\cap (a\cdot W)) \to 0, \qquad \mbox{when} \quad r\to 0,
		$$
		then 
		$$
		\mbox{Tan}^+_{\mathbb G}(S,a)\subseteq W.
		$$
	\end{lemma}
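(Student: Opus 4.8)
The plan is to unwind both definitions and to show that the sequences witnessing a tangent vector $v\in\mbox{Tan}^+_{\mathbb G}(S,a)$ can be approximated, \emph{at the correct scale}, by points of $a\cdot W$, whose rescalings stay inside $W$ by homogeneity, and whose limit is $v$ by closedness.

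First I would fix $v\in\mbox{Tan}^+_{\mathbb G}(S,a)$ and take, as in \autoref{def:Tan+}, sequences $t_m\to 0^+$ and $a_m\in S$ with $d(a_m,a)\to 0$ and $\delta_{1/t_m}(a^{-1}a_m)\to v$. Writing $w_m:=a^{-1}a_m$, left-invariance gives $d(a_m,a)=d(w_m,e)$, and homogeneity of the distance \eqref{eqn:HomLeftInv} gives $d(w_m,e)=t_m\,d(\delta_{1/t_m}w_m,e)$. Since $\delta_{1/t_m}w_m\to v$, the factor $d(\delta_{1/t_m}w_m,e)$ is eventually $<R_0:=d(v,e)+1$. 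Hence $a_m\in B(a,r_m)\cap S$ for the scale $r_m:=t_m R_0$, and $r_m\to 0$.

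Next I would feed this scale into the hypothesis. Setting $\beta_m:=r_m^{-1}d_H\big(B(a,r_m)\cap S,\ B(a,r_m)\cap(a\cdot W)\big)$, the assumption says $\beta_m\to 0$. Since $a_m$ lies in $B(a,r_m)\cap S$, I can pick $p_m\in B(a,r_m)\cap(a\cdot W)$ with $d(a_m,p_m)\le \beta_m r_m + t_m/m$, the extra term absorbing the fact that the infimum defining the Hausdorff distance need not be attained. Writing $p_m=a\cdot w_m'$ with $w_m'\in W$, left-invariance yields $d(w_m,w_m')=d(a_m,p_m)$, and homogeneity converts this into
$$
d\big(\delta_{1/t_m}w_m,\ \delta_{1/t_m}w_m'\big)=\frac{1}{t_m}\,d(w_m,w_m')\le \frac{\beta_m r_m + t_m/m}{t_m}=\beta_m R_0+\tfrac1m\longto 0.
$$

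Finally, because $\delta_{1/t_m}w_m\to v$, the displayed estimate forces $\delta_{1/t_m}w_m'\to v$ as well. As $w_m'\in W$ and $W$ is homogeneous, each $\delta_{1/t_m}w_m'$ lies in $W$; since $W$ is closed, the limit $v$ belongs to $W$, giving $\mbox{Tan}^+_{\mathbb G}(S,a)\subseteq W$. The only delicate point is the choice of scale: one must test the Hausdorff hypothesis at a radius $r_m$ \emph{comparable} to $t_m$ (not smaller), so that after the dilation $\delta_{1/t_m}$ the approximation error $\beta_m r_m/t_m=\beta_m R_0$ still vanishes. Taking $r_m$ proportional to $t_m$ through the uniform bound $R_0$ coming from the convergence $\delta_{1/t_m}w_m\to v$ is exactly what makes the two scales match.
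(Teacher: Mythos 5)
Your proof is correct. The paper itself gives no proof of this lemma (it is quoted from \cite[Theorem 3.3.1]{Koz15}), so there is nothing to compare against line by line; your argument is the natural direct one, and it correctly handles the one delicate point, namely testing the Hausdorff hypothesis at the radius $r_m=t_m R_0$ comparable to $t_m$ so that the error $\beta_m r_m/t_m$ still vanishes after applying $\delta_{1/t_m}$. (The only implicit step is that $B(a,r_m)\cap(a\cdot W)\neq\emptyset$, which is immediate since the hypothesis forces $W\neq\emptyset$ and a nonempty closed homogeneous set contains the identity.)
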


\subsection{Vertical surfaces}
Now we give the definition of \emph{vertical surface}. Loosely speaking, a vertical surface in a Carnot group $\mathbb G$ is a $C^1$-surface that depends only on the horizontal coordinates.
\begin{definition}\label{def:VerticalSurface}
	Let $\mathbb G$ be a Carnot group identified with $\mathbb R^n$ by means of exponential coordinates. Let $m$ be the dimension of the first stratum of the Lie algebra. 
	A \emph{vertical surface} $V$ is
	$$
	V:=\{x\in\mathbb G\equiv \mathbb R^n: f(x_1,\dots,x_m)=0 \},
	$$
	where $f:\Omega\subseteq \mathbb R^m\to \mathbb R$, with $\Omega$ open, is a $C^1$ function with $\nabla f\neq 0$ on $\{\omega \in \Omega: f(\omega_1,\dots,\omega_m)=0\}$. 
	
	Morevoer, if $f$ is linear we say that $V$ is a \emph{vertical subgroup of codimension 1}.
\end{definition}
\begin{remark}\label{rem:Partial}
	An arbitrary vertical surface as in \autoref{def:VerticalSurface} is a $C^1$-hypersurface with no characteristic points, i.e., points that satisfy \eqref{eqn:CharacteristicPoint}. This is due to the fact that, if $1\leq i\leq m$, in exponential coordinates we have
	$$
	X_i=\partial_{x_i} + r_i(x),
	$$
	where $r_i(x)$ is a polynomial combination of $\partial_{x_{i+1}},\dots,\partial_{x_n}$, see \cite[Proposition 2.2]{FSSC03a}, and then, for all $x\in\omega$,
	$$
	X_if(x) =\partial_{x_i}f(x),
	$$
	as $f$ depends only on the first $m$ variables. Thus, from \autoref{rem:Characteristic}, a vertical surface is also a $C^1_{\rm H}$-hypersurface.
\end{remark}
\begin{remark}
	Every tangent group, as defined in \autoref{Def:TangentGroup}, is a vertical subgroup of codimension 1.
\end{remark}
\begin{lemma}\label{lem:EveryTangent}
	Given a Carnot group $\mathbb G$, there exists a vertical surface $V$ such that for every vertical subgroup $\mathbb W$ of codimension 1 in $\mathbb G$ there exists $p\in V$ such that $T_p^{\rm I}V=W$. 
\end{lemma}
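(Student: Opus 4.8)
The plan is to reduce the statement to an elementary question about gradient directions and then to exhibit one explicit defining function that solves it. The starting point is the formula for the tangent group of a vertical surface: by \autoref{Def:TangentGroup} together with the identity $X_if=\partial_{x_i}f$ valid on vertical surfaces (see \autoref{rem:Partial}), the tangent group at a point $p\in V$ is
$$
T_p^{\rm I}V=\Big\{v\in\mathbb G\equiv\mathbb R^n:\ \sum_{i=1}^m v_i\,\partial_{x_i}f(p)=0\Big\},
$$
so it depends on $p$ only through the direction of the horizontal gradient $(\partial_{x_1}f(p),\dots,\partial_{x_m}f(p))\in\mathbb R^m\setminus\{0\}$. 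On the other hand, a codimension-one vertical subgroup $\mathbb W$ is precisely a set $\{x:\sum_i a_ix_i=0\}$ for some $a=(a_1,\dots,a_m)\neq 0$, and $\mathbb W$ is determined by $a$ only up to a nonzero scalar. Hence I would reformulate the lemma as: find a single $C^1$ function $f$ of the horizontal variables, with $\nabla f\neq 0$ on $\{f=0\}$, whose horizontal gradient direction ranges over every direction of $\mathbb{RP}^{m-1}$ as $p$ varies on its zero set.

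For the construction I would simply take the unit sphere in the horizontal variables: set $f(x_1,\dots,x_m)=x_1^2+\dots+x_m^2-1$ and $V=\{x\in\mathbb G:f(x_1,\dots,x_m)=0\}$. This $f$ is $C^\infty$ and satisfies $\nabla f=2(x_1,\dots,x_m)$, which is nonzero on $\{f=0\}=\{\,(x_1,\dots,x_m)\ \text{unit vector}\,\}$, so $V$ is a bona fide vertical surface in the sense of \autoref{def:VerticalSurface}. At any $p\in V$ the horizontal gradient equals $2(p_1,\dots,p_m)$, and as $p$ ranges over $V$ the horizontal part $(p_1,\dots,p_m)$ ranges over the whole sphere $S^{m-1}$, hence realizes every direction in $\mathbb{RP}^{m-1}$.

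To conclude, given an arbitrary codimension-one vertical subgroup $\mathbb W=\{x:\sum_i a_ix_i=0\}$, I would normalize its normal so that $a_1^2+\dots+a_m^2=1$ and then choose $p:=(a_1,\dots,a_m,0,\dots,0)$, which lies on $V$. The displayed computation then yields $T_p^{\rm I}V=\{v:\sum_i v_ia_i=0\}=\mathbb W$, which is exactly what is required. I do not expect any genuine obstacle: the whole point is the observation that the tangent group only sees the direction of the horizontal gradient and that the round sphere's gradient sweeps out all such directions. The only matters needing a line of care are verifying that the sphere qualifies as a vertical surface (the gradient is nowhere zero on the zero set) and that the scaling ambiguity in the normal vector of $\mathbb W$ is harmless since the tangent group is insensitive to it, both of which are immediate.
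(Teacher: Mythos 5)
Your proposal is correct and follows essentially the same route as the paper: both take the unit sphere in the horizontal coordinates as the vertical surface, compute $T_p^{\rm I}V=\{v:\sum_{i=1}^m v_ix_i=0\}$ via $X_if=\partial_{x_i}f$, and observe that every codimension-one vertical subgroup is realized by choosing $p$ with horizontal part equal to the (normalized) normal vector. Your write-up is merely more explicit about the normalization step and the projective interpretation, but the argument is identical.
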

\begin{proof}
	Let us consider 
	$$
	V:= \left\{x\in\mathbb G\equiv \mathbb R^n: \sum_{i=1}^m x_i^2= 1\right\}.
	$$
	At an arbitrary point $p=(x_1,\dots,x_m,x_{m+1}\dots,x_n)$,we have that, by \autoref{Def:TangentGroup} and \autoref{rem:Partial},
	$$
	T_p^{\rm I}V=\left\{v\in\mathbb G\equiv\mathbb R^n: \sum_{i=1}^m v_ix_i=0\right\}
	$$
	and then,  as any linear function $f:\mathbb R^m \to \mathbb R$ can be written as $f(v)=\sum_{i=1}^m v_ix_i$ for a vector $(x_1,\dots,x_m)$ of norm 1, we get the desired conclusion. 
\end{proof}

\section{Notions of rectifiability}\label{sect:Rectifiability}
In this section we introduce several kinds of notions of rectifiability. First we say, in \autoref{def:Grect}, what it means for a metric space $(X,d)$, to be $(\mathcal{F},\mu)$-rectifiable, where $\mathcal{F}$ is a family of metric spaces and $\mu$ is an outer measure on $X$. Namely $(X,d)$ is $(\mathcal{F},\mu)$-rectifiable iff it is $\mu$-a.e.~covered by bi-Lipschitz images of subsets of metric spaces in $\mathcal{F}$. 

Then we specialize this notion in \autoref{def:TRect} by taking the class of metric spaces that are proper, locally doubling and with bi-Lipschitz equivalent tangents. In \autoref{rem:TangRectIsCarnotRect} we discuss further specializations of this notion.

Then we give the notion of Pauls Carnot rectifiability in \autoref{def:LipCarRect}, generalizing the definition given in \cite[Definition 4.1]{Pau04}. In \autoref{rem:LipCounterpart} we briefly discuss the Lipschitz variant of $(\mathcal{F},\mu)$-rectifiability for specific families $\mathcal{F}$.

\begin{definition}[$(\mathcal{F},\mu)$-rectifiability]\label{def:Grect}
	Given a family $\mathcal{F}$ of metric spaces we say that a metric space $(X,d)$, with an outer measure $\mu$ on it, is \emph{$(\mathcal{F},\mu)$-rectifiable} if there exist countably many bi-Lipischitz embeddings $f_i:U_i\subseteq (X_i,d_i)\to (X,d)$ where $
	(X_i,d_i)\in\mathcal{F}$, $i\in\mathbb N$, and 
	$$
	\mu\left(X\setminus\bigcup_{i\in \mathbb N} f_i(U_i)\right) = 0.
	$$
	We say that a metric space $(X,d)$ is \emph{purely} $(\mathcal{F},\mu)$-\emph{unrectifiable} if for every $(X',d')\in\mathcal{F}$ and every bi-Lipschitz embedding $f:U\subseteq (X',d')\to (X,d)$ it holds 
	$$
	\mu(f(U))=0.
	$$
\end{definition}
\begin{remark}\label{rem:LocComp}
	We notice that if $(X,d)$ is complete we can equivalently ask each set $U_i$ to be closed in \autoref{def:Grect}. Indeed, in this case every bi-Lipschitz map $f_i:U_i\to (X,d)$ extends to a bi-Lipschitz map $\overline{f_i}:\overline{U_i}\to(X,d)$.
\end{remark}
\begin{remark}\label{rem:Purely}
	Having a look at \autoref{def:Grect}, assuming we have $\mu(X)>0$, which will be always in our case, we see that one necessary condition for the $(\mathcal{F},\mu)$-rectifiability of $(X,d)$ is the existence of at least one bi-Lipschitz map $f:U\subseteq (X',d')\to (X,d)$, where  $(X',d')\in\mathcal{F}$ and $\mu(f(U))>0$. So if a metric space $(X,d)$, with an outer measure $\mu$ on it such that $\mu(X)>0$, is $(\mathcal{F},\mu)$-purely unrectifiable then it cannot be $(\mathcal{F},\mu)$-rectifiable.
\end{remark}
Specializing the family $\mathcal{F}$ and $\mu$ in \autoref{def:Grect}, we can give the following definitions.
\begin{definition}[bi-Lipschitz homogeneous rectifiability]\label{def:TRect}
	Let $(X,d)$ be a metric space of Hausdorff dimension $k$. Set $\mathcal{T}_k:= \{(X_i,d_i)\}_{i\in I}$ to be the family of all the metric spaces $(X_i,d_i)$ such that:
	\begin{itemize} 
	\item $(X_i,d_i,\mathcal{H}^k)$ is a proper locally doubling metric measure space, with $k=\dim_H X_i$;
	\item any two tangent spaces, at any two points of $X_i$, are bi-Lipschitz equivalent.
	\end{itemize}
	We say that $(X,d)$ is \emph{bi-Lipschitz homogeneous rectifiable} if it is \emph{$(\mathcal{T}_k,\mathcal{H}^k)$}-rectifiable according to \autoref{def:Grect}. We say that $(X,d)$ is \emph{purely bi-Lipschitz homogeneous unrectifiable} if it is purely $(\mathcal{T}_k,\mathcal{H}^k)$-unrectifiable according to \autoref{def:Grect}.
\end{definition}
\begin{remark}\label{rem:TangRectIsCarnotRect}
	The family $\mathcal{T}_k$ defined in \autoref{def:TRect} is very rich. For example it contains all homogeneous Lie groups $\mathbb G$ equipped with left-invariant homogeneous distances, with Hausdorff dimension $k$. Indeed, every tangent space at any point of such a group $\mathbb G$ is isometric to $\mathbb G$ and $(\mathbb G,d_{\mathbb G},\mathcal{H}^k)$ is proper because it is locally compact and admits dilations, and it is locally doubling because it is Ahlfors-regular \cite[Theorem 4.4, (iii)]{LDNG19}. We remark here that the larger class of self-similar metric Lie groups of Hausdorff dimension $k$, whose definition is in \cite{LDNG19}, is still a subclass of $\mathcal{T}_k$. Going beyond Lie groups, we remark that in $\mathcal{T}_k$ one has all those Carnot-Carathéodory spaces whose nilpotentization is constantly equal to a stratified group of homogeneous dimension $k$. This last statement is a consequence of Mitchell's theorem (see \cite{Mit85} and \cite{Bel96}) and the bi-Lipschitz equivalence of left-invariant homogeneous distances.
	
	In the very rich class of homogeneous Lie groups we distinguish homogeneous subgroups of Hausdorff dimension $k$ of arbitrary Carnot groups, with the restricted distance, and obviously also Carnot groups of Hausdorff dimension $k$. We can then give different notions of rectifiability for each of these subfamilies of $\mathcal{T}_k$.
	
	Notice that if we take the subfamily of $\mathcal{T}_k$ made of arbitrary homogeneous subgroups, of dimension $k$, of Carnot groups, we obtain a variation of \cite[Definition 3]{CP04} where we now allow countably many homogeneous subgroups but we require bi-Lipschitz maps. Similarly, if we only consider Carnot groups, we obtain a similar variation of \cite[Definition 4.1]{Pau04}.
\end{remark}
We give next a criterion for purely bi-Lipschitz homogeneous unrectifiability. 
\begin{lemma}\label{lem:KeyLemma}
	Let $(X,d,\mathcal{H}^k)$ be a proper locally doubling metric measure space, with $k=\dim_H X$. If every $\mathcal{H}^k$-positive measure subset of $X$ contains two points that have two tangent spaces that are not bi-Lipschitz equivalent, then $(X,d)$ is purely bi-Lipschitz homogeneous unrectifiable (according to \autoref{def:TRect}).
\end{lemma}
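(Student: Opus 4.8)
The plan is to argue by contradiction. Suppose $(X,d)$ is \emph{not} purely bi-Lipschitz homogeneous unrectifiable; then by \autoref{def:TRect} and \autoref{def:Grect} there are a space $(X',d')\in\mathcal{T}_k$ and a bi-Lipschitz embedding $f\colon U\subseteq X'\to X$ with $\mathcal{H}^k(A)>0$, where $A:=f(U)$. By \autoref{rem:LocComp} we may take $U$ (hence $A$) closed, so that both are proper and their pointed Gromov--Hausdorff tangents are genuine limits; since $f$ is bi-Lipschitz we also have $\mathcal{H}^k(U)>0$. The goal is to show that, after removing an $\mathcal{H}^k$-null subset of $A$, all Gromov--Hausdorff tangents of $X$ at the points of $A$ are pairwise bi-Lipschitz equivalent; applying the hypothesis to the remaining positive-measure subset then yields the contradiction.

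To this end I would establish, at $\mathcal{H}^k$-a.e.\ point $q=f(p)\in A$, the chain of identifications
$$\mathrm{Tan}(X,q)=\mathrm{Tan}(A,q)\ \overset{\mathrm{biLip}}{\sim}\ \mathrm{Tan}(U,p)=\mathrm{Tan}(X',p),$$
where $\mathrm{Tan}(E,\cdot)$ is the set of pointed tangents of a subset $E$ with the restricted distance, and $\overset{\mathrm{biLip}}{\sim}$ means that along every blow-up sequence the corresponding limits are bi-Lipschitz equivalent with constant $L^2$, $L$ being the bi-Lipschitz constant of $f$. The middle relation is the soft step: the bijection $f\colon U\to A$ is $L$-bi-Lipschitz also between the rescaled pointed spaces $(U,r^{-1}d',p)$ and $(A,r^{-1}d,q)$, so convergence of one blow-up forces convergence of the other to an $L^2$-bi-Lipschitz equivalent limit. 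Since $(X',d')\in\mathcal{T}_k$, all its tangents are bi-Lipschitz equivalent to a fixed model $T$, so chaining the four relations shows that every tangent of $X$ at such a $q$ is bi-Lipschitz equivalent to $T$, with a constant uniform in $q$.

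The heart of the matter is the two equalities $\mathrm{Tan}(X,q)=\mathrm{Tan}(A,q)$ and $\mathrm{Tan}(U,p)=\mathrm{Tan}(X',p)$, i.e.\ that blowing up a positive-measure subset recovers the blow-up of the ambient space. I would obtain both from one lemma valid in any proper locally doubling metric measure space: if $q$ is an $\mathcal{H}^k$-density point of a measurable set $E$, then $E$ is \emph{metrically} dense near $q$, meaning that for every $\varepsilon>0$ there is $r_0$ with $E\cap B(q,r)$ being $\varepsilon r$-dense in $B(q,r)$ for all $r<r_0$. Measure density transfers to metric density precisely because of doubling: were some ball $B(x,\varepsilon r)\subseteq B(q,2r)$ to miss $E$, the doubling inequality would give $\mathcal{H}^k(B(x,\varepsilon r))\ge C^{-N}\mathcal{H}^k(B(x,2r))\ge C^{-N}\mathcal{H}^k(B(q,r))$ with $N\sim\log_2(2/\varepsilon)$, forcing $\mathcal{H}^k(B(q,2r)\setminus E)\ge C^{-N-1}\mathcal{H}^k(B(q,2r))$ and contradicting density of $E$ at $q$. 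Metric density at all small scales makes the Gromov--Hausdorff distance between $(E,r^{-1}d,q)$ and $(X,r^{-1}d,q)$ vanish as $r\to0$, so the two share their tangents; the Lebesgue differentiation theorem (valid for doubling measures) provides such density points $\mathcal{H}^k$-a.e.\ in $A\subseteq X$ and in $U\subseteq X'$, and, as $f$ preserves $\mathcal{H}^k$-null sets, both conditions hold simultaneously on a full-measure subset of $A$.

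Collecting everything, there is $A'\subseteq A$ with $\mathcal{H}^k(A')=\mathcal{H}^k(A)>0$ on which every tangent of $X$ is bi-Lipschitz equivalent to $T$, hence any two such tangents are bi-Lipschitz equivalent to each other. But the hypothesis forces $A'$ to contain two points whose $X$-tangents are \emph{not} bi-Lipschitz equivalent, a contradiction. I expect the main obstacle to be the density step: one must check that $\mathcal{H}^k$-a.e.\ point of $A$ lies in $\supp\mathcal{H}^k$ so that the balls have positive finite measure, and, above all, that measure density upgrades to the genuinely metric $\varepsilon$-net density needed for Gromov--Hausdorff convergence. This upgrade, rather than any formal behaviour of tangents under $f$, is exactly where the locally doubling hypothesis is indispensable.
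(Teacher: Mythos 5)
Your argument is correct and follows essentially the same route as the paper's proof: pass to $\mathcal{H}^k$-density points of $U$ and of $f(U)$, identify the tangents of these subsets with the tangents of the ambient spaces at such points, transfer tangents through the bi-Lipschitz map $f$, and use the homogeneity of the tangents of $X'\in\mathcal{T}_k$ to contradict the hypothesis on positive-measure subsets. The only difference is that you supply direct arguments (the doubling-based upgrade of measure density to metric $\eps r$-density, and the blow-up of $f$ along rescalings) for the two steps the paper outsources to \cite{LD11} and \cite{LDY19}.
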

\begin{proof}
	We prove that there is no bi-Lipschitz map $f:U\subseteq \left(X',d'\right)\to (X,d)$, where $\mathcal{H}^k(f(U))>0$ and $\left(X',d'\right)\in\mathcal{T}_k$. We restrict ourselves to consider $U$ closed, see \autoref{rem:LocComp}.
	
	If there exists such a map, first of all notice that $\mathcal{H}^k(U)>0$ because $f$ is bi-Lipschitz. Now we can restrict ourselves to the points of density of $U$ with respect to $\mathcal{H}^k$, say $W$, and $W$ is a set of full $\mathcal{H}^k$-measure in $U$ \cite[Corollary 2.13, Theorem 3.1 and Remark 3.3]{Rig18}. Then, by the fact that $f$ is bi-Lipschitz, the set $f(W)$ has full $\mathcal{H}^k$-measure in $f(U)$. The set $Z$ of points in $f(W)$ of density of $f(U)$ with respect to $\mathcal{H}^k$, is still a set of full $\mathcal{H}^k$-measure in $f(U)$ because it is the intersection of two sets of full $\mathcal{H}^k$-measure in $f(U)$. Then it holds $\mathcal{H}^k(Z)>0$ since $\mathcal{H}^k(f(U))>0$.
	
	By hypothesis there exist two points $x,y\in W$ and $p=f(x),q=f(y)\in Z$ with two non-bi-Lipschitz tangent spaces $T_p$ and $T_q$. Because of the fact that we are dealing with $1$-density points, we can say that $\mbox{Tan}\left(U,x,d'\right)=\mbox{Tan}\left(X',x,d'\right)$ and $\mbox{Tan}(f(U),p,d)=\mbox{Tan}(X,p,d)$ and the same holds with $y$ and $q$, see \cite[Proposition 3.1]{LD11}. Passing to the tangents in $p$ and $x$ we get, as in \cite[Section 5.2]{LDY19}, some induced bi-Lipschitz map between $T_p$ and one element of $\mbox{Tan}\left(X',x,d'\right)$. In the same way we get a bi-Lipschitz map between $T_q$ and one element of $\mbox{Tan}\left(X',y,d'\right)$. By hypothesis each element of $\mbox{Tan}\left(X',x,d'\right)$ is bi-Lipschitz to each element of $\mbox{Tan}\left(X',y,d'\right)$, so that at the end $T_p$ is bi-Lipschitz to $T_q$, which is a contradiction. 
\end{proof}

Let us point out that in \autoref{def:Grect} we require the parametrizing maps to be bi-Lipschitz while for the classical definitions of rectifiability one may just ask for the map to be Lipschitz. We next give the Lipschitz counterpart of \autoref{def:Grect} for the family of Carnot groups.
\begin{definition}[Pauls Carnot rectifiability]\label{def:LipCarRect}
	Let $(X,d)$ be a metric space of Hausdorff dimension $k$.
	We say that $(X,d)$ is \emph{Pauls Carnot rectifiable} if there exist  countably many Carnot groups $\mathbb G_i$ of Hausdorff dimension $k$ and Lipschitz maps $f_i:U_i\subseteq (\mathbb G_i,d_i)\to (X,d)$ such that
	$$
	\mathcal{H}^k\left(X\setminus \bigcup_{i\in\mathbb N} f_i(U_i)\right)=0.
	$$
	We say that $(X,d)$ is \emph{purely Pauls Carnot unrectifiable} if for every Carnot group $\mathbb G$ of  Hausdorff dimension $k$, every Lipschitz map $f:U\subseteq (\mathbb G,d_{\mathbb G})\to (X,d)$ satisfies
	$$
	\mathcal{H}^k(f(U))=0.
	$$
\end{definition}
\begin{remark}\label{rem:CountLip}
	The definition given in \autoref{def:LipCarRect} is a generalization of \cite[Definition 4.1.]{Pau04} where it was considered only one Carnot group for the parametrization of $X$. The definition of {\em purely $\mathbb G$-unrectifiability}, with a Carnot group $\mathbb G$, was already given in \cite[Definition 3.1]{Mag04}. That is, given a Carnot group $\mathbb G$ of Hausdorff dimension $k$, we say that a metric space $(X,d)$ is {\em purely $\mathbb{G}$-unrectifiable} if every Lipschitz map $f:U\subseteq \mathbb G\to X$ satisfies $\mathcal{H}^k(f(U))=0$.
\end{remark}
\begin{remark}\label{rem:LipCounterpart}
	In this paper we will not focus on the Lipschitz counterpart of \autoref{def:TRect}. The case of homogeneous subgroups of Carnot groups would lead to to a variant of \cite[Definition 3]{CP04} allowing countably many possibly different groups. We think there are pathological examples and more easy-to-ask questions that we are not able to answer up to now. 
	
	For example Peano's curve tells that the Euclidean plane $\mathbb R^2$ can be Lipschitz rectified with $\left(\mathbb R,\|\cdot\|^{1/2}\right)$. Notice that $\left(\mathbb R,\|\cdot\|^{1/2}\right)$ is the vertical line in the Heisenberg group.\footnote{We have evidence that every Carnot group of Hausdorff dimension $Q$ can be Lipschitz rectified with $\left(\mathbb R,\|\cdot\|^{1/Q}\right)$, which is a subgroups of every Carnot group of step $Q$.}
	
	Forcing the topological dimension to be the same, we also wonder whether there exists a Lipschitz map 
	$$
	f:U\subseteq \left(\mathbb R^3,\|\cdot\|^{3/4}\right) \to \mathbb H^1,
	$$
	with $\mathcal{H}^4(f(U))>0$.
\end{remark}
\begin{remark}\label{rem:Purely2}
	As in \autoref{rem:Purely}, if $(X,d)$ has Hausdorff dimension $k$ and $\mathcal{H}^k(X)>0$, it holds that if $(X,d)$ is purely Pauls Carnot unrectifiable then it is not Pauls Carnot rectifiable. 
\end{remark}

\section{A Carnot algebra with uncountable non-isomoprhic Carnot subalgebras}\label{sec:Construction}
In this section we prove that there exists a Carnot algebra $\Gg$ of dimension 8 that has uncountably many pairwise non-isomorphic Carnot subalgebras of dimension 7. The Lie algebra $\Gg$ is constructed in \autoref{def:G} and in \autoref{prop:R1} we prove the result. 
\begin{definition}\label{def:Gmu}
	Given $\mu\in\mathbb R$, we call $\Gg_{\mu}$ the Carnot algebra of step 3 and dimension 7 given by
	$$
	\Gg_{\mu}=\Vv^1_{\mu}\p\Vv^2_{\mu}\p\Vv^3_{\mu},
	$$
	where $$\Vv^1_{\mu}=\mbox{span}\{X_1,X_2,X_3\}, \quad \Vv^2_{\mu}=\mbox{span}\{X_4,X_5,X_6\}, \quad \Vv^3_{\mu}=\mbox{span}\{X_7\},
	$$
	with the following relations 
	\begin{equation}
	\begin{aligned}
	&[X_1,X_2]=X_4, \quad &[X_1,X_3]=-X_6, \quad &[X_2,X_3]=X_5; \\
	&[X_1,X_5]=-X_7, \quad &[X_2,X_6]=\mu X_7, \quad &[X_3,X_4]=(1-\mu)X_7,
	\end{aligned}
	\end{equation}
	where all the other commutators between two vectors of the basis $\{X_1,\dots,X_7\}$ that are not listed above are zero.
\end{definition}
\begin{remark}\label{rem:Unc}
	The family $\{\Gg_{\mu}\}_{\mu\in\mathbb R}$ in \autoref{def:Gmu} consists of uncountably many pairwise non-isomorphic Carnot algebras, which are called of type 147E, see \cite{Gong98}. Indeed, if $\mu_1,\mu_2\notin\{0,1\}$, the Lie algebra $\Gg_{\mu_1}$ is isomorphic to $\Gg_{\mu_2}$ if and only if $I(\mu_1)=I(\mu_2)$, where 
	$$
	I(\mu):= \frac{(1-\mu+\mu^2)^3}{\mu^2(\mu-1)^2}.
	$$
\end{remark}

Our plan is to next add a direction $X$ in the first stratum of a specific Carnot algebra given by \autoref{def:Gmu}, namely the one with $\mu=0$. We show the existence of uncountably many pairwise non-isomorphic Carnot subalgebras of dimension 7 in this new Carnot algebra of dimension 8.

\begin{definition}\label{def:G}
	We call $\Gg$ the Carnot algebra of step 3 and dimension 8 given by 
	$$
	\Gg=\Vv^1\p\Vv^2\p\Vv^3,
	$$
	where
	$$
	\Vv^1=\mbox{span}\{X_0,X_1,X_2,X_3\}, \quad \Vv^2=\mbox{span}\{X_4,X_5,X_6\}, \quad \Vv^3=\mbox{span}\{X_7\},
	$$
	with the following bracket relations 
	\begin{equation}\label{relationsG}
	\begin{aligned}
	&[X_1,X_2]=X_4, \quad  [X_1,X_3]=-X_6, \quad [X_1,X_0]=-X_4, \quad [X_2,X_3]=X_5; \\
	&[X_1,X_5]=-X_7, \quad [X_3,X_4]=X_7, \quad [X_0,X_6]=X_7,
	\end{aligned}
	\end{equation}
	and all the other commutators between two elements of the basis $\{X_0,X_1,\dots,X_7\}$ that are not listed above are 0.
\end{definition}
\begin{remark}
	Let us show that the one defined in \autoref{def:G} is a Lie algebra. It suffices to verify Jacobi identity on triples of pairwise different vectors of the basis. Being the step of the stratification equal to $3$, it suffices to show the Jacobi identity on vectors in the first stratum $\Vv^1$. Then, as we are extending $\Gg_0$ in \autoref{def:Gmu}, we just have to check the Jacobi identity on the triples $\{X_1,X_2,X_0\}$, $\{X_2,X_3,X_0\}$ and $\{X_1,X_3,X_0\}$. A simple computation yields 
	\begin{equation}
	\begin{aligned}
	&[X_1,[X_2,X_0]]+[X_2,[X_0,X_1]]+[X_0,[X_1,X_2]]=0+[X_2,X_4]+[X_0,X_4]=0, \\
	&[X_2,[X_3,X_0]]+[X_3,[X_0,X_2]]+[X_0,[X_2,X_3]]=0+0+[X_0,X_5]=0,\\
	&
	[X_1,[X_3,X_0]]+[X_3,[X_0,X_1]]+[X_0,[X_1,X_3]]=0+[X_3,X_4]-[X_0,X_6]=X_7-X-7=0,
	\end{aligned}
	\end{equation}
	which is what we want. 
\end{remark}
Now we are ready for the following proposition.
\begin{proposition}\label{prop:R1}
	If $\Gg$ is the Carnot algebra of dimension 8 and step 3 in \autoref{def:G}, then there exist uncountably many Carnot subalgebras  of dimension 7 of $\Gg$ that are pairwise non-isomorphic. 
\end{proposition}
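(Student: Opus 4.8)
The plan is to produce, for each real parameter $a$, an explicit $7$-dimensional Carnot subalgebra $\Gs_a\subseteq\Gg$ and to show that $\Gs_a$ is isomorphic to the algebra $\Gg_a$ of \autoref{def:Gmu}; the uncountability of pairwise non-isomorphic examples then follows immediately from \autoref{rem:Unc}. The observation driving the construction is that the extra direction $X_0$ was added to $\Gg_0$ precisely so that the deformation $X_2\mapsto X_2+aX_0$ converts the vanishing bracket $[X_2,X_6]=0$ of $\Gg_0$ into $aX_7$, which is exactly the parameter-dependent relation $[X_2,X_6]=\mu X_7$ distinguishing the members of the family $\Gg_\mu$.

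Concretely, first I would fix $a\in\setR\setminus\{0,1\}$ and let $\Gs_a$ be the Carnot subalgebra generated by the three horizontal vectors $Y_1:=X_1$, $Y_2:=X_2+aX_0$, $Y_3:=X_3$. Using the relations \eqref{relationsG} one computes the first brackets $[Y_1,Y_2]=(1-a)X_4$, $[Y_1,Y_3]=-X_6$ and $[Y_2,Y_3]=X_5$, so that (since $a\neq1$) the second stratum of $\Gs_a$ is all of $\Vv^2=\mbox{span}\{X_4,X_5,X_6\}$. A second round of brackets gives $[Y_1,X_5]=-X_7$, $[Y_2,X_6]=aX_7$ and $[Y_3,X_4]=X_7$, while every remaining bracket of a generator with a second-layer vector vanishes; hence the third stratum is $\mbox{span}\{X_7\}$ and $\dim\Gs_a=3+3+1=7$.

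Next I would exhibit the isomorphism $\Gs_a\to\Gg_a$ by matching adapted bases. Setting $\tilde X_1=Y_1$, $\tilde X_2=Y_2$, $\tilde X_3=Y_3$, $\tilde X_4=(1-a)X_4$, $\tilde X_5=X_5$, $\tilde X_6=X_6$ and $\tilde X_7=X_7$, the computations of the previous step show that the $\tilde X_i$ satisfy exactly the defining relations of $\Gg_\mu$ with $\mu=a$: indeed $[\tilde X_1,\tilde X_2]=\tilde X_4$, $[\tilde X_1,\tilde X_3]=-\tilde X_6$, $[\tilde X_2,\tilde X_3]=\tilde X_5$, $[\tilde X_1,\tilde X_5]=-\tilde X_7$, $[\tilde X_2,\tilde X_6]=a\tilde X_7$ and $[\tilde X_3,\tilde X_4]=(1-a)\tilde X_7$. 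Since this linear map carries a stratified basis to a stratified basis and preserves all structure constants, it is an isomorphism of Carnot algebras, so $\Gs_a\cong\Gg_a$.

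Finally, to conclude, I would invoke \autoref{rem:Unc}: for $a\notin\{0,1\}$ the isomorphism class of $\Gg_a$ is determined by the value $I(a)=\frac{(1-a+a^2)^3}{a^2(a-1)^2}$. Since $I$ is a non-constant rational function, each of its level sets is finite, so a countable image would force $\setR\setminus\{0,1\}$ to be a countable union of finite sets; hence $I$ takes uncountably many distinct values, and choosing one parameter per value yields uncountably many pairwise non-isomorphic subalgebras $\Gs_a$ of $\Gg$. The only point requiring genuine care is the bookkeeping in the two bracket computations — in particular verifying that every bracket which must vanish in $\Gg_\mu$ also vanishes for the deformed generators (this is where $[X_0,X_4]=[X_0,X_5]=[X_0,X_3]=0$ and $[X_2,X_6]=0$ in $\Gg$ are used) — but this is a finite, routine verification rather than a conceptual obstacle.
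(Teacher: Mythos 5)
Your proposal is correct and follows essentially the same route as the paper: both deform the generator $X_2$ to $X_2+aX_0$, compute the two rounds of brackets to see that the generated subalgebra is $7$-dimensional and satisfies the relations of $\Gg_a$ (with the rescaling $\tilde X_4=(1-a)X_4$ implicit in the paper's notation $Y_4:=(1-a)X_4$), and then invoke \autoref{rem:Unc}. Your added remarks — excluding $a\in\{0,1\}$ so that the invariant $I$ applies, and noting that the non-constant rational function $I$ has finite level sets and hence uncountable image — are slightly more careful than the paper's appeal to \autoref{rem:Unc}, but the argument is the same.
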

\begin{proof}
	We present explicitely an uncountable family of Carnot subalgebras of dimension 7 of $\Gg$, indexed by $\lambda\in\mathbb R$, which are isomorphic to $\Gg_{\lambda}$ in \autoref{def:Gmu} if $\lambda\neq 1$. Then by \autoref{rem:Unc} we get the conclusion. 
	
	Given $\lambda\in \mathbb R$, with $\lambda\neq 1$, let us define the following vector in $\Vv^1\subseteq \Gg$,
	\begin{equation}\label{DefY}
	\quad Y_2:= X_2+\lambda X_0.
	\end{equation}
	Then $\{X_1,Y_2,X_3\}$ are linearly independent vectors of $\Vv^1$.
	By explicit computations, using the relations in \eqref{relationsG}, we have
	\begin{equation}\label{1Layerh}
	\begin{aligned}
	&[X_1,Y_2]=(1-\lambda)X_4=: Y_4, \\
	&[X_1,X_3]=-X_6, \\
	&[Y_2,X_3]=X_5; \\
	\end{aligned}
	\end{equation}
	\begin{equation}\label{2Layerh}
	\begin{aligned}
	&[X_1,Y_4]=0, \\
	&[X_1,X_5]=-X_7, \\
	&[X_1,X_6]=0, \\
	&[Y_2,Y_4]=0, \\
	&[Y_2,X_5]=0, \\
	&[Y_2,X_6]=\lambda X_7, \\
	&[X_3,Y_4]=(1-\lambda)X_7, \\
	&[X_3,X_5]=0, \\
	&[X_3,X_6]=0,
	\end{aligned}
	\end{equation}
	and all the other commutators beetwen two elements of the linearly independent vectors $\{X_1,Y_2,X_3,Y_4,X_5,X_6,X_7\}$, that are not listed above, vanish. Then in view of \eqref{1Layerh} and \eqref{2Layerh}, if $\lambda\neq 1$, the subspace $\Gw^1:= \mbox{span}\{X_1,Y_2,X_3\}$ generates a Carnot subalgebra  of step 3 and dimension $7$ in $\Gg$, which is isomorphic to $\Gg_{\lambda}$ in \autoref{def:Gmu}.
\end{proof}

	\section{Main results}\label{sec:MainResults}
	In this section we construct the example that satisfies \autoref{thm:T1.5Intro} and we prove the properties discussed in the introduction. We build the hypersurface $S$ in the Carnot group $\mathbb G$ whose Lie algebra $\Gg$ is as in \autoref{def:G}.
	
	First of all let us identify $\mathbb G$ with $\mathbb R^8$ by using exponential coordinates and the ordered basis $\left(X_0,X_1,\dots,X_7\right)$
	\begin{equation}\label{exp1stKind}
	\begin{aligned}
	x&=(x_0,x_1,x_2,x_3,x_4,x_5,x_6,x_7)\to \\ &\to \exp\left(x_0X_0+x_1X_1+x_2X_2+x_3X_3+x_4X_4+x_5X_5+x_6X_6+x_7X_7\right).
	\end{aligned}
	\end{equation}
	In these coordinates we can express the left-invariant vector fields $X_0(x),X_1(x),X_2(x),X_3(x)$ that extend $X_0,X_1,X_2,X_3$, in this way, see \cite[Proposition 2.2]{FSSC03a}:
	\begin{equation}\label{ExpressionsOfVector}
	\begin{aligned}
	&X_0(x)=\partial_{x_0}+r_0(x), \\
	&X_1(x)=\partial_{x_1}+r_1(x), \\
	&X_2(x)=\partial_{x_2}+r_2(x), \\
	&X_3(x)=\partial_{x_3}+r_3(x),
	\end{aligned}
	\end{equation}
	where $r_0(x),r_1(x),r_2(x),r_3(x)$ are combinations, with polynomial coefficients of the coordinates, of $\partial_{x_4},\partial_{x_5},\partial_{x_6},\partial_{x_7}$. 
	Now we are ready to state and prove one of the main results of this article.
	\begin{proposition}\label{prop:R2}
	  There exist a Carnot group $\mathbb G$ and a $C^{\infty}$ non-characteristic hypersurface $S\subseteq \mathbb G$ with uncountably many pairwise non-isomorphic tangent groups.
	\end{proposition}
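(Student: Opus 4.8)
The plan is to realize the uncountable family of pairwise non-isomorphic $7$-dimensional Carnot subalgebras produced in \autoref{prop:R1} as the tangent groups, at different points, of a single smooth hypersurface. I take $\mathbb G$ to be the Carnot group whose Lie algebra $\Gg$ is the one of \autoref{def:G}, and I look for $S$ among the vertical surfaces of \autoref{def:VerticalSurface}: these are automatically $C^{\infty}$ and, by \autoref{rem:Partial}, non-characteristic, so the only real content is to arrange the tangent groups. The mechanism I exploit is that for a vertical surface with defining function $f=f(x_0,x_1,x_2,x_3)$ one has $X_if=\partial_{x_i}f$ in the horizontal directions (\autoref{rem:Partial}), so by \autoref{Def:TangentGroup} the tangent group $T_p^{\rm I}S$ is exactly the codimension-$1$ vertical subgroup whose first layer is the kernel of the horizontal covector $(v_0,v_1,v_2,v_3)\mapsto\sum_{i=0}^{3}v_i\,\partial_{x_i}f(p)$.

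Next I match this with the construction of \autoref{prop:R1}. There, for $\lambda\neq1$, the $3$-dimensional subspace $\Gw^1_\lambda:=\mathrm{span}\{X_1,\ X_2+\lambda X_0,\ X_3\}$ of $\Vv^1$ is shown to bracket-generate all of $\Vv^2$ and $\Vv^3$; consequently the codimension-$1$ vertical subgroup $\mathbb W_\lambda$ having $\Gw^1_\lambda$ as first layer (and hence the full $\Vv^2\p\Vv^3$ in the higher layers) coincides with the Carnot subalgebra generated by $\Gw^1_\lambda$, and this is isomorphic to $\Gg_\lambda$ of \autoref{def:Gmu}. Therefore it suffices to produce a vertical surface whose tangent groups include $\mathbb W_\lambda$ for uncountably many $\lambda$: concretely, I must realize the horizontal gradient proportional to the covector $(1,0,-\lambda,0)$, i.e.\ $\partial_{x_1}f=\partial_{x_3}f=0$ and $\partial_{x_2}f/\partial_{x_0}f=-\lambda$.

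A clean choice is $S:=\{x\in\mathbb G\equiv\mathbb R^8:\ x_0^2+x_2^2=1\}$ (alternatively one may invoke directly the surface of \autoref{lem:EveryTangent}). On it $\nabla_{\setH}f=(2x_0,0,2x_2,0)$, so at a point with $x_0\neq0$ the first layer of $T_p^{\rm I}S$ is $\mathrm{span}\{X_1,X_3,\ X_2-\tfrac{x_2}{x_0}X_0\}=\Gw^1_\lambda$ with $\lambda=-x_2/x_0$. As $p$ ranges over the points of $S$ with $x_0\neq0$, the parameter $\lambda=-x_2/x_0$ takes every real value, in particular uncountably many values in $\mathbb R\setminus\{0,1\}$, each giving a tangent group isomorphic to $\Gg_\lambda$.

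It remains to pass from ``uncountably many $\lambda$'' to ``uncountably many isomorphism classes''. By \autoref{rem:Unc}, for $\lambda_1,\lambda_2\notin\{0,1\}$ one has $\Gg_{\lambda_1}\cong\Gg_{\lambda_2}$ iff $I(\lambda_1)=I(\lambda_2)$, and since $I$ is a non-constant rational function each of its fibers is finite; hence any uncountable set of parameters $\lambda$ contains an uncountable subset on which $I$ is injective, yielding uncountably many pairwise non-isomorphic tangent groups. The only step that is not purely formal is the identification in the second paragraph --- that the full vertical subgroup cut out by the horizontal gradient really is the subalgebra generated by its first layer, and is thus isomorphic to $\Gg_\lambda$ --- but this is nothing more than the bracket computation already performed in \autoref{prop:R1}, so I do not expect a genuine obstacle.
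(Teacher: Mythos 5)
Your proposal is correct and follows essentially the same route as the paper: the same group $\mathbb G$ from \autoref{def:G}, the same family of codimension-one vertical subgroups with first layer $\mathrm{span}\{X_1,\,X_2+\lambda X_0,\,X_3\}$ identified with $\Gg_\lambda$ via the brackets computed in \autoref{prop:R1}, and a vertical surface (the paper takes the one from \autoref{lem:EveryTangent}, you take the cylinder $x_0^2+x_2^2=1$) realizing uncountably many of these as tangent groups. Your extra remarks --- that the vertical subgroup cut out by the horizontal gradient coincides with the subalgebra generated by its first layer when $\lambda\neq 1$, and that the finiteness of the fibers of $I(\mu)$ yields uncountably many isomorphism classes --- are details the paper leaves implicit, not a different argument.
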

	\begin{proof} 
	Let us consider the Carnot algebra $\Gg$ in \autoref{def:G} and $\mathbb G:= \exp\Gg$ identified with $\mathbb R^8$ by means of the exponential coordinates in \eqref{exp1stKind}.
	
	Let us consider the family of vertical subgroups of codimension 1 in $\mathbb G$ given by 
	$$
	\mathbb G_{\lambda} := \{v\in\mathbb G\equiv \mathbb R^8: \lambda v_2-v_0=0\}.
	$$
	The Lie algebra of $\mathbb G_{\lambda}$ is isomorphic to the algebra $\Gg_{\lambda}$ if $\lambda\neq 1$ according to the proof of \autoref{prop:R1}. Then the family $\{\mathbb G_{\lambda}\}_{\lambda \in \mathbb R}$ contains uncountably many non-isomorphic Carnot groups and the conclusion follows taking the vertical surface in $\mathbb G$ given by \autoref{lem:EveryTangent} that is smooth and non-characteristic due to \autoref{rem:Partial}.
\end{proof}
	\begin{remark}
		In particular, every $S$ as in \autoref{prop:R2} is not bi-Lipschitz equivalent to an open set in a Carnot group. This follows from a blow-up argument and Pansu's differentiability theorem \cite{Pan89}. The argument will be made clear in the proof of the forthcoming \autoref{thm:T1}. We stress that even for some sub-Riemannian manifolds the constance of the tangent may not give bi-Lipschitz local equivalence with the tangent Carnot group, see \cite{LOW14}.
	\end{remark}
	\begin{remark}\label{rem:Ex2}
	We give another example of smooth non-characteristic hypersurface satisfying \autoref{prop:R2}. The particular form of this example will help us in showing the forthcoming \autoref{thm:T1} and \autoref{thm:T1.5}.
	Let us consider again the Carnot algebra $\Gg$ in \autoref{def:G} and $\mathbb G:= \exp\Gg$ identified with $\mathbb R^8$ by means of the exponential coordinates in \eqref{exp1stKind}. Let us consider the vertical surface
	\begin{equation}\label{defS}
	S=\left\{x\in\mathbb G\equiv \mathbb R^8: f(x):= \frac{1}{3}x_2^3+x_0 = 0 \right\}.
	\end{equation}
 	By \autoref{rem:Partial} this is a smooth non-characteristic hypersurface. 
	From easy computations due to the particular form of $X_i$'s in \eqref{ExpressionsOfVector} and from the definition of tangent group in  \autoref{Def:TangentGroup}, it follows that 
	$$
	{\rm Lie}(T_x^{\rm I}S)=
	\mbox{span}\{X_1,X_2-x_2^2X_0,X_3,X_4,X_5,X_6,X_7\},
	$$
	and then ${\rm Lie}(T_x^{\rm I}S)$ is isomorphic to the Carnot algebra generated by $\Gw^1$ defined in the proof of \autoref{prop:R1}, where the $\lambda$ there is now equal to $-x_2^2$. Then, the Lie algebra  ${\rm Lie}(T_x^{\rm I}S)$ is isomorphic to $\Gg_{-x_2^2}$ defined in \autoref{def:Gmu}.
	Because of the fact that given any $\lambda\leq 0$ there is always a point in $S$ satisfying $\lambda=-x_2^2$, \autoref{rem:Unc} grants us that the family $\{{\rm Lie}(T_x^{\rm I}S)\}_{x\in S}$ contains uncountably many pairwise non-isomorphic Carnot algebras and then the family $\{T_x^{\rm I}S\}_{x\in S}$ contains uncountably many pairwise non-isomorphic Carnot groups.
	\end{remark}
	\begin{theorem}\label{thm:T1}
		There exist a Carnot group $\mathbb G$ and a $C^{\infty}$ non-characteristic hypersurface $S\subseteq \mathbb G$, of Hausdorff dimension 12, such that on every $\mathcal{H}^{12}$-positive measure subset of it there are two points with non-isomorphic tangents. In particular, the set $S$ is purely bi-Lipschitz homogeneous unrectifiable according to \autoref{def:TRect}.
	\end{theorem}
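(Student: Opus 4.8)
The plan is to take as $\mathbb{G}$ and $S$ exactly the Carnot group and the vertical surface of \autoref{rem:Ex2}, and then to verify the hypotheses of \autoref{lem:KeyLemma}. First I would record the background data. The algebra $\Gg$ of \autoref{def:G} has strata of dimensions $4,3,1$, so $\mathbb{G}$ has homogeneous dimension $Q=4+6+3=13$; since $S$ is a $C^{\infty}$ non-characteristic, hence $C^1_{\rm H}$, hypersurface by \autoref{rem:Partial}, \autoref{prop:C1HisQ-1} gives $\dim_H S=Q-1=12$ and that $\mathcal{H}^{12}$ restricted to $S$ is locally doubling. As $S=f^{-1}(0)$ is closed in the proper space $\mathbb{G}$, the triple $(S,d,\mathcal{H}^{12})$ is a proper locally doubling metric measure space of Hausdorff dimension $12$, matching the standing hypotheses of \autoref{lem:KeyLemma}. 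Moreover, by \autoref{rem:Ex2} the intrinsic tangent at $x\in S$ satisfies $T_x^{\rm I}S\cong\Gg_{-x_2^2}$, and by \autoref{rem:GHtangent} this is, up to isometry, the Gromov--Hausdorff tangent at $x$; in particular each tangent is a Carnot group whose isomorphism type depends only on the coordinate $x_2$.

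The heart of the matter is to show that every $\mathcal{H}^{12}$-positive subset $A$ of $S$ carries two points with non-isomorphic tangents, and the key preliminary step is that each level set $S_a:=S\cap\{x_2=a\}$ is $\mathcal{H}^{12}$-null. On $S$ one has $x_0=-\tfrac13 x_2^3$, so $S_a=\{x_0=-\tfrac13 a^3,\ x_2=a\}$ is a full affine subspace in the remaining coordinates $x_1,x_3,x_4,x_5,x_6,x_7$. Since the group law adds only higher-stratum corrections to the first-layer coordinates, this set is a single left coset $q_a\cdot\mathbb{W}$ of the homogeneous subgroup $\mathbb{W}=\mathrm{span}\{X_1,X_3,X_4,X_5,X_6,X_7\}$, where one checks directly from \eqref{relationsG} that $\mathbb{W}$ is a graded subalgebra. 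Left translations being isometries for a homogeneous left-invariant distance, $S_a$ is isometric to $\mathbb{W}$, whose Hausdorff dimension equals its homogeneous dimension $1+1+2+2+2+3=11<12$; hence $\mathcal{H}^{12}(S_a)=0$ for every $a$.

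With this in hand, I would argue by contradiction: suppose some $A$ with $\mathcal{H}^{12}(A)>0$ has all its tangents pairwise isomorphic. Discarding the null set $S_0$, I may assume $x_2\neq 0$, i.e. $\mu=-x_2^2<0\notin\{0,1\}$, throughout $A$. By \autoref{rem:Unc}, pairwise isomorphism forces $I(-x_2^2)$ to equal a fixed constant $c$ on $A$, where $I(\mu)=(1-\mu+\mu^2)^3/(\mu^2(\mu-1)^2)$. As $I$ is a non-constant rational function, $I(-x_2^2)=c$ is a nontrivial polynomial relation in $x_2$, satisfied by only finitely many values of $x_2$; thus $A$ lies in a finite union of level sets $S_{a_j}$, whence $\mathcal{H}^{12}(A)=0$, a contradiction. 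Therefore every positive-measure subset contains two points $x,y$ with $\Gg_{-x_2^2}\not\cong\Gg_{-y_2^2}$.

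Finally I would upgrade non-isomorphism of the tangents to non-bi-Lipschitz equivalence: a bi-Lipschitz homeomorphism between Carnot groups is Pansu differentiable almost everywhere, and at a common point of differentiability of the map and its inverse the differential is an invertible graded homomorphism, so bi-Lipschitz equivalent Carnot groups are isomorphic \cite{Pan89}. Consequently the two tangents produced above are not bi-Lipschitz equivalent, and \autoref{lem:KeyLemma} yields that $S$ is purely bi-Lipschitz homogeneous unrectifiable in the sense of \autoref{def:TRect}. I expect the main obstacle to be the nullity of the level sets $S_a$, the only genuinely geometric input beyond bookkeeping: one must make sure the coset identification with $\mathbb{W}$ is correct and that its Hausdorff dimension is indeed $11$ for the chosen homogeneous distance, the Pansu step being the one conceptual, rather than computational, ingredient.
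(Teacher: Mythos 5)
Your proposal is correct and follows the paper's strategy in all essential respects: same example $S$ from \autoref{rem:Ex2}, same reduction to \autoref{lem:KeyLemma}, same use of \autoref{prop:C1HisQ-1}, \autoref{rem:GHtangent} and \autoref{rem:Unc}, and the same Pansu-differentiability step to pass from non-isomorphic to non-bi-Lipschitz tangents. The one place where you genuinely diverge is the nullity of the level sets $S\cap\{x_2=a\}$. The paper views this set as the intersection of two $C^1_{\rm H}$-hypersurfaces, invokes the intersection version of the implicit function theorem from \cite[Theorem A.5]{DMV19} to write it locally as an intrinsic Lipschitz graph over $\mathbb W=\{v_0=v_2=0\}$, and then applies the Ahlfors-regularity bounds of \autoref{prop:dimiLip}. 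You instead observe that, because $S$ is the graph $x_0=-\tfrac13x_2^3$ over the remaining coordinates, the level set is \emph{exactly} the left coset $q_a\cdot\mathbb W$ of the homogeneous subgroup $\mathbb W=\exp(\mathrm{span}\{X_1,X_3,X_4,X_5,X_6,X_7\})$ (which is indeed a graded subalgebra by \eqref{relationsG}, and the coset identification is correct since the BCH product perturbs only the higher strata), hence isometric to $\mathbb W$ and of Hausdorff dimension $11$ by \cite[Theorem 4.4, (iii)]{LDNG19}. This is more elementary and self-contained, exploiting the special form of $S$, whereas the paper's route is more robust and would survive perturbations of $S$ for which the level set is only an intrinsic Lipschitz graph rather than an exact coset. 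Your contrapositive formulation of the final counting step (constancy of $I(-x_2^2)$ forces $x_2$ into finitely many values) also makes explicit the finite-to-one-ness of $I$ that the paper leaves implicit, and your preliminary removal of the null set $\{x_2=0\}$ correctly handles the exclusion $\mu\notin\{0,1\}$ in \autoref{rem:Unc}. No gaps.
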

	\begin{proof}
		Let us consider $\Gg$ the Carnot algebra in \autoref{def:G}. Let us identify $\mathbb G:= \exp\Gg$ with $\mathbb R^8$ by means of the exponential coordinates in \eqref{exp1stKind} and let us fix a left-invariant homogeneous distance $d$ on $\mathbb G$. Let us consider $S$ as in \autoref{rem:Ex2}. By \autoref{prop:C1HisQ-1}, the definition of $S$, and the fact that the Hausdorff dimension of $\mathbb G$ is $13$, we get that the Hausdorff dimension of $S$ is $12$ and $(S,d,\mathcal{H}^{12})$ is a proper locally doubling metric measure space. Notice also that by \autoref{prop:GHTangent} (see also \autoref{rem:GHtangent}) we get that the Gromov-Hausdorff tangent at each point $x\in S$ is unique and isometric to $(T_x^{\rm I}S,d)$. Notice that for all $x\in S$, the space $T_x^{\rm I}S$ is a Carnot group, as it is shown in the construction of $S$ given in \autoref{rem:Ex2}.
		
		We claim that 
		\begin{equation}\label{eqn:Slambda} \mathcal{H}^{12}(S\cap\{x_2=\lambda\})=0, \qquad \forall\lambda\in\mathbb R.
		\end{equation}
		Indeed, we know that $S\cap\{x_2=\lambda\}$ is the intersection of two $C^1_{\rm H}$-hypersurfaces that satisfy the improved version of \autoref{prop:C1H=iLip} contained in \cite[Theorem A.5]{DMV19}. Then we get that $S\cap\{x_2=\lambda\}$ is locally the graph of an intrinsic Lipschitz function defined on 
		$$
		\mathbb W :=\{v\in \mathbb G\equiv \mathbb R^8: x_2^2v_2+v_0=0\}\cap\{v\in \mathbb G\equiv \mathbb R^8: v_2=0\}=\{v\in\mathbb G\equiv \mathbb R^8:v_0=v_2=0\},
		$$
		with values in the horizontal subgroup $\mathbb H:=\{\exp(tX_0+sX_2): t,s\in\mathbb R\}$. Notice that $\mathbb H$ is a subgroup because $[X_0,X_2]=0$.  By using \cite[Theorem 4.4, (iii)]{LDNG19} we get that $\mathbb W$ has Hausdorff dimension 11 with respect to the distance $d$ and then by \autoref{prop:dimiLip} we get \eqref{eqn:Slambda}.
		
		Now we claim that each subset $U$ of $S$ that satisfies $\mathcal{H}^{12}(U)>0$ has at least two points with two non-bi-Lipschitz Gromov-Hausdorff tangents. Indeed, the equation \eqref{eqn:Slambda} tells us that for each $U\subseteq S$ with $\mathcal{H}^{12}(U)>0$, the coordinate function $x_2$ takes on $U$ uncountable values. This, according to the fact that $T_x^{\rm I}S$ is a Carnot group isomorphic to the one with Lie algebra $\Gg_{-x_2^2}$ (see \autoref{rem:Ex2}), immediately tells that there are in $U$ at least two points with two non-isomorphic (because of \autoref{rem:Unc}) Carnot groups as tangent. By Pansu's theorem \cite{Pan89}, two non-isomorphic Carnot groups cannot be bi-Lipschitz equivalent, so the claim follows. Now the proof is completed by using the criterion shown in \autoref{lem:KeyLemma}.
	\end{proof}
	From \autoref{rem:Purely} we have the following consequence to \autoref{thm:T1}.
	\begin{corollary}\label{cor:T1}
	There exist a Carnot group $\mathbb G$ and a $C^{\infty}$ non-characteristic hypersurface $S\subseteq \mathbb G$ that it is not bi-Lipschitz homogeneous rectifiable according to \autoref{def:TRect}
	\end{corollary}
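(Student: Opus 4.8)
The plan is to derive this corollary directly from the stronger statement already established in \autoref{thm:T1}, so that essentially no new work is required. The Carnot group $\mathbb G$ and the hypersurface $S$ will be precisely the ones produced there: the step-$3$, dimension-$8$ Carnot group $\mathbb G=\exp\Gg$, with $\Gg$ as in \autoref{def:G}, together with the smooth non-characteristic vertical surface $S$ of \autoref{rem:Ex2}.

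First I would record that $S$ has strictly positive measure in the relevant dimension. By \autoref{prop:C1HisQ-1}, the surface $S$ has Hausdorff dimension $12$ and $\mathcal{H}^{12}$ restricted to $S$ is a locally doubling measure; in particular $\mathcal{H}^{12}(S)>0$. This is exactly the hypothesis $\mu(X)>0$ needed to run the logical implication recorded in \autoref{rem:Purely}, and it is immediate from the local Ahlfors-regularity of $\mathcal{H}^{12}$ on $S$.

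Next I would invoke the conclusion of \autoref{thm:T1}, namely that $(S,d)$ is purely bi-Lipschitz homogeneous unrectifiable in the sense of \autoref{def:TRect}. Combining this with $\mathcal{H}^{12}(S)>0$, the implication in \autoref{rem:Purely} applies with $\mathcal{F}=\mathcal{T}_{12}$ and $\mu=\mathcal{H}^{12}$, and forces $(S,d)$ not to be $(\mathcal{T}_{12},\mathcal{H}^{12})$-rectifiable, i.e.\ not bi-Lipschitz homogeneous rectifiable according to \autoref{def:TRect}. This is the claim.

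There is no genuine obstacle at this stage: all the geometric and measure-theoretic difficulty—the uncountable family of pairwise non-isomorphic tangent groups, and the argument via \autoref{lem:KeyLemma} showing that every $\mathcal{H}^{12}$-positive subset of $S$ carries two points with non-bi-Lipschitz tangents—has already been absorbed into \autoref{thm:T1}. The only point worth double-checking is that \autoref{rem:Purely} is being applied to the correct family and outer measure, which is immediate from the definitions in \autoref{def:Grect} and \autoref{def:TRect}.
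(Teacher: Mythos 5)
Your proposal is correct and follows exactly the paper's route: the paper deduces \autoref{cor:T1} from \autoref{thm:T1} via \autoref{rem:Purely}, with the positivity of $\mathcal{H}^{12}$ on $S$ guaranteed by \autoref{prop:C1HisQ-1}. No further comment is needed.
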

	\begin{remark}\label{rem:NotAble}
	Notice that from \autoref{cor:T1} it follows that $S$ is not rectifiable
	according to the countable bi-Lipschitz variant of \cite[Definition 3]{CP04}, see \autoref{rem:TangRectIsCarnotRect}. We notice here that we still are not able to prove that our counterexample is not rectifiable according to \cite[Definition 3]{CP04}, see \autoref{rem:LipCounterpart}. Nevertheless, in the forthcoming \autoref{cor:T1.5} we show that it is not so according to \cite[Definition 4.1]{Pau04}.
	\end{remark}
	\begin{theorem}\label{cor:T1.5}
		There exist a Carnot group $\mathbb G$ and a $C^{\infty}$ non-characteristic hypersurface $S\subseteq \mathbb G$ that is purely Pauls Carnot unrectifiable according to \autoref{def:LipCarRect}.
	\end{theorem}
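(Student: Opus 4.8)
The plan is to argue by contradiction, upgrading the tangential rigidity of $S$ recorded in \autoref{thm:T1} to a statement about images via Magnani's area formula. Take $S$ as in \autoref{rem:Ex2}, living in the Carnot group $\mathbb G$ of homogeneous dimension $13$, so that $\dim_H S=12$. Suppose, for contradiction, that some Carnot group $\mathbb G'$ of Hausdorff dimension $12$ admits a Lipschitz map $f\colon U\subseteq\mathbb G'\to S$ with $\mathcal H^{12}(f(U))>0$. Viewing $f$ as a Lipschitz map into the ambient group $\mathbb G$, \autoref{thm:Rademacher} yields a Carnot homomorphism $Df_x\colon\mathbb G'\to\mathbb G$ at $\mathcal H^{12}$-a.e.\ $x\in U$, and \autoref{thm:AreaFormula} gives
$$
\int_U J_{12}(Df_x)\de\mathcal H^{12}(x)=\int_{\mathbb G}\sharp(f^{-1}(y)\cap U)\de\mathcal H^{12}(y)\geq \mathcal H^{12}(f(U))>0,
$$
so that the set $A:=\{x\in U: J_{12}(Df_x)>0\}$ has positive measure; applying the area formula on $A$ in the same way shows moreover that $\mathcal H^{12}(f(A))>0$.

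The key geometric step, and the point I expect to be the main obstacle, is to show that each surviving differential has its image inside the tangent group of $S$, namely $Df_x(\mathbb G')\subseteq T_{f(x)}^{\rm I}S$ for $x\in A$. I would obtain this by a blow-up at $p:=f(x)$. For $z$ in the dense set $E$ of \autoref{rem:DefinitionOfDifferential} the differential is computed as $Df_x(z)=\lim_{t\to 0}\delta_{1/t}\big(f(x)^{-1}f(x\delta_t z)\big)$; since the points $a_t:=f(x\delta_t z)$ lie on $S$ and converge to $p$, \autoref{def:Tan+} gives $Df_x(z)\in\mbox{Tan}^+_{\mathbb G}(S,p)$. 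As $S$ is a $C^1_{\rm H}$-hypersurface, \autoref{prop:GHTangent} furnishes the Hausdorff approximation of $S$ by $p\cdot T_p^{\rm I}S$ with exactly the rate demanded by \autoref{lem:Tan+}, whence $\mbox{Tan}^+_{\mathbb G}(S,p)\subseteq T_p^{\rm I}S$. Since $T_p^{\rm I}S$ is a closed subgroup and $Df_x$ is a continuous homomorphism, the inclusion for $z\in E$ extends to all of $\mathbb G'$ by density, giving $Df_x(\mathbb G')\subseteq T_p^{\rm I}S$.

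Finally I would close the argument by dimension bookkeeping. The image $Df_x(\mathbb G')$ is a homogeneous subgroup of $\mathbb G$, and by \autoref{rem:CarnotHom} it is graded; if $Df_x$ had nontrivial kernel, its image would be a proper graded subgroup of Hausdorff dimension strictly below $12$, hence $\mathcal H^{12}$-null, forcing $J_{12}(Df_x)=0$. Thus for $x\in A$ the homomorphism $Df_x$ is injective, so $Df_x(\mathbb G')$ has Hausdorff dimension $12$; being contained in the codimension-one vertical subgroup $T_p^{\rm I}S$, which also has Hausdorff dimension $12$ by \autoref{lem:HausdorffQ-1}, it must coincide with it. Therefore $Df_x\colon\mathbb G'\to T_{f(x)}^{\rm I}S$ is a Carnot isomorphism, so $T_{f(x)}^{\rm I}S\cong\mathbb G'$ for $\mathcal H^{12}$-a.e.\ $x\in A$. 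Consequently $f(A)$ is an $\mathcal H^{12}$-positive subset of $S$ all of whose points share the single isomorphism type $\mathbb G'$ as tangent group, which directly contradicts \autoref{thm:T1}. This contradiction shows that every such $f$ satisfies $\mathcal H^{12}(f(U))=0$, i.e.\ $S$ is purely Pauls Carnot unrectifiable according to \autoref{def:LipCarRect}.
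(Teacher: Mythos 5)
Your proposal is correct and follows essentially the same route as the paper's proof: Magnani's Rademacher theorem and area formula to isolate a positive-measure set of points where the differential is an injective Carnot homomorphism, the blow-up argument via \autoref{def:Tan+}, \autoref{prop:GHTangent} and \autoref{lem:Tan+} to force the differential's image into $T_{f(x)}^{\rm I}S$, the dimension-drop lemma (\autoref{lem:StrictDimension}) to upgrade injectivity to an isomorphism onto the tangent group, and finally the contradiction with \autoref{thm:T1}. The only cosmetic difference is that you work with the set $A=\{J_{12}(Df_x)>0\}$ directly, whereas the paper decomposes $U$ into non-differentiability, injective and non-injective pieces and shows the first and third have null image; the two bookkeeping schemes are equivalent.
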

	\begin{proof}
	
	Let us take $S$ and $\mathbb G$ as in \autoref{rem:Ex2}. Let us fix on $\mathbb G$ a homogeneous left-invariant distance $d$. Then from \autoref{prop:C1HisQ-1} we get that the Hausdorff dimension of $S$ is 12, because the Hausdorff dimension of $\mathbb G$ is 13. We will show there is no Lipschitz map $f:U\subseteq \hat{\mathbb G}\to (S,d)$, with $\hat{\mathbb G}$ a Carnot group, $\dim_H\hat{\mathbb G}=12$ and $\mathcal{H}^{12}(f(U))>0$. 
	
	Suppose by contradiction there is such a map. We can assume $U$ closed, see \autoref{rem:LocComp}. By composing the map $f$ with the inclusion $i:S\hookrightarrow\mathbb G$ we get a Lipschitz map $\tilde{f}:U\subseteq \hat{\mathbb G} \to \mathbb G$. We will make use of results and notation in Section \ref{sect:Preliminaries}.
	
	Let us call $U_{\rm ND}\subseteq U$ the set of points where $\tilde{f}$ is non-differentiable, $U_{\rm I}\subseteq U$ the set of differentiability points $x$ of $\tilde{f}$ for which $D\tilde{f}_x:\hat{\mathbb G}\to\mathbb G$ is injective and $U_{\rm NI}\subseteq U$ the set of  differentiability points $x$ of $\tilde{f}$ for which $D\tilde{f}_x$ is not injective. We thus have $U=U_{\rm ND}\sqcup U_{\rm I}\sqcup U_{\rm NI}$, $\tilde{f}(U)=\tilde{f}(U_{\rm ND})\cup \tilde{f}(U_{\rm I})\cup \tilde{f}(U_{\rm NI})$ and we know, from Rademacher theorem (\autoref{thm:Rademacher}) and the fact that $\tilde{f}$ is Lipschitz, that $\mathcal{H}^{12}(\tilde{f}(U_{\rm ND}))=\mathcal{H}^{12}(U_{\rm ND})=0$.
	
	We claim that $\mathcal{H}^{12}(\tilde{f}(U_{\rm I}))>0$. 
	Indeed, the Hausdorff dimension of $\hat{\mathbb G}$ is 12. Thus, for $x\in U_{\rm NI}$, we get that $D\tilde{f}_x(\hat{\mathbb G})$ is a homogeneous subgroup of $\mathbb G$ of Hausdorff dimension at most 11, see \autoref{lem:StrictDimension} below. Then
	$$
	J_{12}(D\tilde{f}_x)=\frac{\mathcal{H}^{12}(D\tilde{f}_x(B(0,1)))}{\mathcal{H}^{12}(B(0,1))} = 0,
	$$
	and from \autoref{thm:AreaFormula}
	applied to $\tilde{f}:U_{\rm NI}\to\mathbb G$ we get $\mathcal{H}^{12}(\tilde{f}(U_{\rm NI}))=0$. Now we conclude the proof of the claim:
	$$
	\begin{aligned}
	\mathcal{H}^{12}(\tilde{f}(U_{\rm I}))&=\mathcal{H}^{12}(\tilde{f}(U_{\rm I}))+\mathcal{H}^{12}(\tilde{f}(U_{\rm NI}))+\mathcal{H}^{12}(\tilde{f}(U_{\rm ND}))\\
	&\geq \mathcal{H}^{12}(\tilde{f}(U))>0.
	\end{aligned}
	$$
	
	For every point $z$ in $U_{\rm I}$ there exists an injective Carnot homomorphism $D\tilde{f}_z:\hat{\mathbb G}\to\mathbb G$. For how it is constructed the differential $D\tilde{f}_z$ (see \autoref{rem:DefinitionOfDifferential}) we know that for $\omega$ in a dense subset $\Omega$ of $\hat{\mathbb G}$ we have 
	$$
	D\tilde {f}_z(\omega)=\lim_{z\delta_t\omega, t\to 0} \delta_{1/t}\left(\tilde{f}(z)^{-1}\tilde{f}(z\delta_t\omega)\right).
	$$
	From $\tilde{f}(z\delta_t\omega) \in S$ and $\tilde{f}(z\delta_t\omega)\to \tilde{f}(z)$ we thus get that $D\tilde{f}_z(\omega) \in \mbox{Tan}^+_{\mathbb G}(S,\tilde{f}(z))$ (see \autoref{def:Tan+}). Then
	thanks to \autoref{lem:Tan+}, applied with $W=T_{\tilde{f}(z)}^{\rm I} S$ in view of \autoref{prop:GHTangent}, we get that $D\tilde{f}_z(\omega)$ takes values in $T_{\tilde{f}(z)}^{\rm I}S$ for $\omega \in \Omega$. 
	Now taking into account that $D\tilde{f}_z$ is defined on all of $\hat{\mathbb G}$ by density (see \autoref{rem:DefinitionOfDifferential}) and considering that $T_{\tilde{f}(z)}^{\rm I}S$ is closed, we get that $D\tilde{f}_z$ takes values in $T_{\tilde{f}(z)}^{\rm I}S$, which is a Carnot subgroup of $\mathbb G$ of Hausdorff dimension $12$, thanks to the explicit expression of the tangent in \autoref{rem:Ex2} and \cite[Theorem 4.4, (iii)]{LDNG19}. Thus as $\hat{\mathbb G}$ has Hausdorff dimension 12 itself and $D\tilde{f}_z$ is injective, we get that $D\tilde{f}_z$ is an isomorphism and so $\hat{\mathbb G}$ is isomorphic to $T_{\tilde{f}(z)}^{\rm I}S$ for every $z\in U_{\rm I}$. Thus we get a contradiction with \autoref{thm:T1} because we proved $\mathcal{H}^{12}(\tilde{f}(U_I))>0$.
	\end{proof}
	\begin{lemma}\label{lem:StrictDimension}
		Let $\phi:\mathbb G\to \mathbb H$ be a Carnot homomorphism between two Carnot groups. If $\phi$ is not injective then it holds
		$$
		\dim_H \mathbb G \geq \dim_H \phi(\mathbb G)+1.
		$$
	\end{lemma}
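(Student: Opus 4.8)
The plan is to reduce everything to the induced Lie algebra homomorphism $\phi_*\colon\Gg\to\Gh$ and then to compute both Hausdorff dimensions as homogeneous dimensions, layer by layer. First I would recall from \autoref{rem:CarnotHom} that $\phi_*$ respects the stratifications, namely $\phi_*(\Vv_i)\subseteq\Vvh_i$ for every $i$, and that it commutes with the dilations, $\phi_*\circ\delta_{\lambda}=\delta_{\lambda}\circ\phi_*$. Since for Carnot groups $\exp$ is a diffeomorphism and $\phi=\exp\circ\phi_*\circ\exp^{-1}$, the map $\phi$ fails to be injective if and only if $\mathfrak{k}:=\ker\phi_*\neq\{0\}$; so we may assume $\mathfrak{k}\neq\{0\}$.

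Next I would observe that $\mathfrak{k}$ is a graded subspace. Writing $v=v_1+\dots+v_s$ with $v_i\in\Vv_i$, one has $\phi_*(v)=\sum_i\phi_*(v_i)$ with $\phi_*(v_i)\in\Vvh_i$; as the strata $\Vvh_i$ are in direct sum, $\phi_*(v)=0$ forces $\phi_*(v_i)=0$ for every $i$, whence $\mathfrak{k}=\bigoplus_{i=1}^s(\mathfrak{k}\cap\Vv_i)$. The same direct-sum observation shows that the image subalgebra decomposes as $\phi_*(\Gg)=\bigoplus_{i=1}^s\phi_*(\Vv_i)$, with $i$-th graded piece exactly $\phi_*(\Vv_i)$. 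Consequently $\phi(\mathbb G)=\exp(\phi_*(\Gg))$ is a homogeneous subgroup of $\mathbb H$ whose stratification has layers $\phi_*(\Vv_i)$.

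Then I would invoke the Hausdorff-dimension formula for homogeneous subgroups from \cite[Theorem 4.4, (iii)]{LDNG19} to write
$$\dim_H\mathbb G=\sum_{i=1}^s i\dim\Vv_i,\qquad \dim_H\phi(\mathbb G)=\sum_{i=1}^s i\dim\phi_*(\Vv_i).$$
By the rank--nullity theorem applied to $\phi_*|_{\Vv_i}$, whose kernel is precisely $\mathfrak{k}\cap\Vv_i$, one has $\dim\phi_*(\Vv_i)=\dim\Vv_i-\dim(\mathfrak{k}\cap\Vv_i)$. Subtracting the two displayed identities gives
$$\dim_H\mathbb G-\dim_H\phi(\mathbb G)=\sum_{i=1}^s i\,\dim(\mathfrak{k}\cap\Vv_i).$$
Since $\mathfrak{k}$ is nonzero and graded, at least one summand $\mathfrak{k}\cap\Vv_i$ is nonzero, and because each weight $i\geq 1$ the right-hand side is at least $1$. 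This yields $\dim_H\mathbb G\geq\dim_H\phi(\mathbb G)+1$.

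I do not expect a serious obstacle here; the only point requiring care is the exact identification of the layers of $\phi(\mathbb G)$ with $\phi_*(\Vv_i)$ so that \cite[Theorem 4.4, (iii)]{LDNG19} applies and the dimension count is an honest equality rather than an inequality. This is guaranteed by the directness of the stratum decomposition together with the compatibility $\phi_*\circ\delta_{\lambda}=\delta_{\lambda}\circ\phi_*$ from \autoref{rem:CarnotHom}, which forces $\phi_*$ to send each weight-$i$ vector to a weight-$i$ vector and hence prevents any collapsing across different strata.
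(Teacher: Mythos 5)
Your proof is correct and follows essentially the same route as the paper: both arguments use that $\phi_*$ preserves the strata (so the kernel is graded and the image has layers $\phi_*(\Vv_i)$), apply the homogeneous-dimension formula of \cite[Theorem 4.4, (iii)]{LDNG19}, and conclude by a weight count showing the nontrivial kernel removes at least weight $1$. The only cosmetic difference is that you run rank--nullity stratum by stratum, whereas the paper picks a basis of $\Gg$ adapted to the stratification extending a basis of $\ker\phi_*$; the content is identical.
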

	\begin{proof}
		By definition of Carnot homomorphism we get that $\mbox{Ker}\phi$ is a homogeneous subgroup of $\mathbb G$ and $\phi(\mathbb G)$ is a homogeneous subgroup of $\mathbb H$.
		If an element $g$ is in $\Vv_i$ for some $i$, we say that $i$ is the degree of $g$ and write $\deg g=i$.
		We take $\{e_1,\dots,e_l,e_{l+1},\dots,e_n\}\subseteq \cup_{i=1}^s \Vv_i$ a basis of $\Gg$, such that $\{e_1,\dots,e_l\}$ is a basis of $\mbox{Ker}\phi$. Then $\{ \phi_*(e_{l+1}),\dots,\phi_*(e_n)\}$ is a basis of the Lie algebra of $\phi(\mathbb G)$. By the fact that $\phi_*$ preserves the stratification (see \autoref{rem:CarnotHom}), we get 
		$$
		\deg \phi_*(e_i)=\deg e_i
		$$
		for each $l+1\leq i\leq n$. Then by \cite[Theorem 4.4, (iii)]{LDNG19} and the previous equation we get 
		$$
		\dim_H \phi(\mathbb G)=\sum_{i=l+1}^n \deg \phi_*(e_i) = \sum_{i=l+1}^n \deg e_i < \sum_{i=1}^n \deg e_i =\dim_H\mathbb G.
		$$
		where we used in the strict inequality that $l>0$ being  $\phi$ not injective. 
	\end{proof}
	
		From \autoref{rem:Purely2} we have this consequence to \autoref{cor:T1.5}.
	\begin{corollary}\label{thm:T1.5}
		There exist a Carnot group $\mathbb G$ and a $C^{\infty}$ non-characteristic hypersurface $S\subseteq \mathbb G$ that is not Pauls Carnot rectifiable according to \autoref{def:LipCarRect}.
	\end{corollary}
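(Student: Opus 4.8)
The plan is to deduce this corollary directly from the purely Pauls Carnot unrectifiability already established in \autoref{cor:T1.5}, via the general implication recorded in \autoref{rem:Purely2}. First I would take the Carnot group $\mathbb G$ and the $C^{\infty}$ non-characteristic hypersurface $S\subseteq\mathbb G$ produced by \autoref{cor:T1.5}, so that by construction $S$ is purely Pauls Carnot unrectifiable according to \autoref{def:LipCarRect}. The remaining task is purely formal: to promote ``purely unrectifiable'' to ``not rectifiable''.

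Next I would verify the hypothesis needed to apply \autoref{rem:Purely2}, namely that $S$ has Hausdorff dimension $12$ and positive $\mathcal{H}^{12}$-measure. Since $S$ is a non-characteristic $C^1$-hypersurface it is in particular a $C^1_{\rm H}$-hypersurface by \autoref{rem:Characteristic}, and the ambient group $\mathbb G$ has homogeneous dimension $13$ (the group built from the algebra of \autoref{def:G}); hence \autoref{prop:C1HisQ-1} gives $\dim_H S=12$ and, moreover, that $\mathcal{H}^{12}$ restricted to $S$ is a locally doubling measure on $S$, which in particular forces $\mathcal{H}^{12}(S)>0$.

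Finally, I would invoke \autoref{rem:Purely2}: for a metric space of Hausdorff dimension $k$ whose $\mathcal{H}^k$-measure is positive, purely Pauls Carnot unrectifiability implies the failure of Pauls Carnot rectifiability. Concretely, were $S$ Pauls Carnot rectifiable, \autoref{def:LipCarRect} would furnish countably many Carnot groups $\mathbb G_i$ of Hausdorff dimension $12$ and Lipschitz maps $f_i:U_i\subseteq\mathbb G_i\to S$ with $\mathcal{H}^{12}\bigl(S\setminus\bigcup_{i}f_i(U_i)\bigr)=0$; since $\mathcal{H}^{12}(S)>0$, countable subadditivity forces $\mathcal{H}^{12}(f_i(U_i))>0$ for at least one index $i$, which directly contradicts the purely Pauls Carnot unrectifiability of $S$. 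I expect no genuine obstacle here: the entire substantive content of the corollary is already carried by \autoref{cor:T1.5}, and the only point to check is the measure positivity, which is immediate from \autoref{prop:C1HisQ-1}; the corollary is thus an entirely soft consequence of the preceding work.
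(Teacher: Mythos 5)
Your proposal is correct and follows exactly the paper's route: the corollary is stated as an immediate consequence of \autoref{cor:T1.5} via \autoref{rem:Purely2}, with the positivity of $\mathcal{H}^{12}$ on $S$ guaranteed by \autoref{prop:C1HisQ-1}. Your explicit verification of the measure-positivity hypothesis and the unwinding of the subadditivity argument are just slightly more detailed versions of what the paper leaves implicit.
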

	\begin{remark}
		The examples of \autoref{thm:T1.5} are actually $C^1_{\rm H}$-hypersurfaces because they are smooth and non-characteristic, see \autoref{rem:Characteristic}. Thus they rectifiable in the sense of Franchi, Serapioni and Serra Cassano but we proved they are not in the sense of \cite[Definition 4.1]{Pau04}. Indeed, the definition of Pauls Carnot rectifiability is a generalization of \cite[Definition 4.1]{Pau04}, see \autoref{rem:CountLip}.
	\end{remark}
	\begin{remark}\label{rem:TangentRect}
		We notice that every tangent group to $S$ as in the proof of \autoref{cor:T1.5} is a Carnot group. So $S$ is an example of a smooth non-characteristic hypersurface in a Carnot group that cannot be Lipschitz parametrizable by countably many subsets of its tangents. 
	\end{remark}
	
	We state here as a theorem something we already proved in \autoref{thm:T1}.
	\begin{theorem}\label{thm:T2}
	There exists a proper locally doubling metric measure space $(X,d,\mathcal{H}^k)$, where $k:=\mbox{dim}_H X$, that satisfies the following two properties:
	\begin{enumerate}
		\item For each $x\in X$, there exists (up to isometry) only one element in $\mbox{Tan}(X,d,x)$ and it is a Carnot group;
		\item For each $U\subseteq X$ with $\mathcal{H}^k(U)>0$ there exists an uncountable family $\{x_i\}_{i\in I}\subseteq U$ of points such that the tangent spaces at these points are pairwise non-bi-Lipschitz equivalent.
	\end{enumerate}
	\end{theorem}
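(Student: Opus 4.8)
The plan is to verify that the hypersurface $S\subseteq\mathbb G$ built in \autoref{rem:Ex2}, equipped with the restriction $d$ of a left-invariant homogeneous distance on $\mathbb G$ and with $k:=12$, is the desired space $(X,d,\mathcal{H}^k)$; in fact all the analytic work has already been done inside the proof of \autoref{thm:T1}, so the argument is essentially a repackaging. First I would record that, since $\mathbb G$ has homogeneous dimension $13$ and $S$ is a smooth non-characteristic (hence $C^1_{\rm H}$, by \autoref{rem:Characteristic}) hypersurface, \autoref{prop:C1HisQ-1} gives $\dim_H S=12$ and that $(S,d,\mathcal{H}^{12})$ is a proper, locally doubling metric measure space. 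This settles the ambient hypotheses of the statement with $X=S$ and $k=12$.

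For property (1) I would invoke \autoref{prop:GHTangent} together with \autoref{rem:GHtangent}: at every $x\in S$ the pointed Gromov-Hausdorff tangent set $\mbox{Tan}(S,d,x)$ consists, up to isometry, of the single space $(T_x^{\rm I}S,d)$. By the explicit computation in \autoref{rem:Ex2}, $T_x^{\rm I}S$ is a Carnot group whose Lie algebra is isomorphic to $\Gg_{-x_2^2}$; in particular the unique tangent is a Carnot group, exactly as required.

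For property (2) I would fix $U\subseteq S$ with $\mathcal{H}^{12}(U)>0$ and exploit the vanishing \eqref{eqn:Slambda}, namely $\mathcal{H}^{12}(S\cap\{x_2=\lambda\})=0$ for every $\lambda\in\mathbb R$. Since $U$ has positive measure it cannot be contained in a countable union of such slices, so the coordinate $x_2$ takes uncountably many values on $U$; discarding the single slice $\{x_2=0\}$ (which is $\mathcal{H}^{12}$-null) we may moreover assume $x_2\neq 0$, hence $-x_2^2<0$. The isomorphism type of the tangent $T_x^{\rm I}S\cong\Gg_{-x_2^2}$ is detected by the invariant $I(-x_2^2)$ of \autoref{rem:Unc}; because $I$ is a non-constant rational function, each of its level sets is finite, and therefore uncountably many distinct $x_2$-values must produce uncountably many distinct values of $I(-x_2^2)$. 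Picking one point of $U$ for each such value yields an uncountable family $\{x_i\}_{i\in I}\subseteq U$ whose tangent Carnot groups are pairwise non-isomorphic, and hence pairwise non-bi-Lipschitz equivalent by Pansu's theorem \cite{Pan89}. The one point that deserves care, and which I would single out as the only genuine step beyond \autoref{thm:T1}, is this finite-fiber argument upgrading the ``two points'' of \autoref{thm:T1} to an uncountable family; everything else is a direct citation of facts already established.
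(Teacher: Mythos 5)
Your proposal is correct and follows essentially the same route as the paper, whose proof of \autoref{thm:T2} is literally the single sentence that the example and argument are those of \autoref{thm:T1}. The only content you add is to make explicit the upgrade from ``two points with non-isomorphic tangents'' to an uncountable family, via the observation that the invariant $I$ of \autoref{rem:Unc} has finite level sets; this is a detail the paper leaves implicit, and you handle it correctly (including discarding the null slice $\{x_2=0\}$ so that $\mu=-x_2^2\notin\{0,1\}$).
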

\begin{proof}
	The example and the proof is exactly the same as in the proof of \autoref{thm:T1}.
\end{proof}
\begin{remark}
Another example (a sub-Riemannian manifold) that satisfies \autoref{thm:T2} was presented in \cite[Proposition 13]{LDY19}.
\end{remark}
\begin{remark}($S$ has a structure of sub-Riemannian manifold)\label{rem:SubRiemannian}
	For an introduction to sub-Riemannian manifolds, also in the Finsler case, one can see \cite[Chapter 2]{LD17b}.
	We show that the example in \autoref{rem:Ex2} has also the structure of an equiregular sub-Riemannian manifold.  Indeed, the set $S$ is a smooth hypersurface of $\mathbb{R}^8$ and we can consider the distribution
	$$
	\mathcal{D}_x:=\mbox{span}\{X_1(x),X_2(x)-x_2^2X_0(x),X_3(x)\}\subseteq T_xS,
	$$
	where $X_i(x)$ are defined in \autoref{ExpressionsOfVector} and $T_xS$ is the Euclidean tangent of $S$. We equip the distribution $\mathcal{D}$ with some smooth scalar product $\bold g$. By the computations made in \autoref{prop:R1} and by considering \eqref{ExpressionsOfVector}, we get 
	\begin{equation}\label{Deltan}
	\begin{aligned}
	\mathcal{D}^2_x&:=\mathcal{D}_x +[\mathcal{D},\mathcal{D}]_x=\\
	&=\mbox{span}\{X_1(x),X_2(x)-x_2^2X_0(x),X_3(x),X_4(x),X_5(x),X_6(x)\}, \\ 
	\mathcal{D}^3_x&:= \mathcal{D}^2_x+[\mathcal{D},\mathcal{D}^2](x)=\\
	&=\mbox{span}\{X_1(x),X_2(x)-x_2^2X_0(x),X_3(x),X_4(x),X_5(x),X_6(x),X_7(x)\}=\\
	&=T_xS.
	\end{aligned}
	\end{equation}
	Then we get that, for all $x\in S$, $\mbox{dim}\mathcal{D}_x= 3$, $\mbox{dim}\mathcal{D}^2_x = 6$ and $\mbox{dim}\mathcal{D}^3_x = 7$. Thus we define the sub-Riemannian distance $d_{sR}$ associated to the sub-Riemannian structure $(S,\mathcal{D},\bold g)$. We get from the results in \cite{Bel96} (see also \cite[Theorem 2.5]{Jean14}, \cite[page 25]{Jean14}) and the explicit expression \eqref{Deltan}, that the tangent space at $x\in S$ to $(S,d_{sR})$ is isometric to the Carnot group $\mathbb{G}_{-x_2^2}$ with Lie algebra $\Gg_{-x_2^2}$, defined in \autoref{def:Gmu}, equipped with the Carnot-Carathéodory distance induced by the left-invariant scalar product that, on the first stratum of the Lie algebra, coincide with $\bold g_x$. Also from \cite[Theorem 2]{Mit85} (see also \cite[Theorem 3.1]{GJ16}) we get that the Hausdorff dimension of $(S,d_{sR})$ is 12. 
	
	Then a slightly change of the proof of \autoref{thm:T1} - namely at the end of the proof we need to use the results about the Hausdorff dimension of smooth submanifolds in a sub-Riemannian manifold in  \cite[0.6.B]{Grom96} (see also \cite[Theorem 5.3]{GJ16}) to obtain $\mathcal{H}^{12}(S\cap\{x_2=\lambda\})=0$ - gives that $(S,d_{sR})$ is purely bi-Lipschitz homogeneous unrectifiable. Also, reasoning exactly as in the proof of \autoref{thm:T1}, we have that $(S,d_{sR},\mathcal{H}^{12})$ satisfies \autoref{thm:T2}.
\end{remark}
\section{Pauls' rectifiability of $C^{\infty}$-hypersurfaces in Heisenberg \\ groups}\label{sec:MainResults2}
	In this section we prove that $C^{\infty}$-hypersurfaces in the $n$-th Heisenberg group, $\mathbb H^n$ with $n\geq 2$, are rectifiable according to \cite[Definition 3]{CP04}, see also \autoref{thm:FinalTheorem} and \autoref{rem:BiLipPauls}. We start with a distance comparison lemma which more generally holds for Carnot groups of step 2, see \autoref{prop:OnB}. Then in Section \ref{subsec:Equivalence} we show the equivalence of intrinsic distance and induced distance for intrinsic Lipschitz graphs in Heiseneberg groups, which has been suggested to us by F\"assler and Orponen, adapting an argument of \cite{CMPSC14}. After that, in Section \ref{1.3}, we prove that $C^{\infty}$ non-characteristic hypersurfaces in $\mathbb H^n$, with $n\geq 2$, carry a sub-Riemannian structure, see \autoref{prop:MainProp2} and \cite[Theorem 1.1]{TY04}. Therefore, we show that the sub-Riemannian distance is locally equivalent to the induced distance, see \autoref{prop:DistanceAreEquivalent}. By means of \cite[Theorem 1]{LDY19} we are able to conclude the result: see \autoref{thm:FinalTheorem}.
	
	\subsection{Carnot Groups of step 2}\label{sub:CarnotStep2}
	In this subsection we recall the geometry of step 2 groups in exponential coordinates. We stress here that sometimes we use Einstein notation: we do not use the symbol $\Sigma$ and we remind that, in this case, we are tacitly taking the sum over the repeated indexes. Every Carnot group of step 2 arises as follows.

	Let $(B^1_{jl}), \dots, (B^n_{jl})$ be $n$ linearly independent skew-symmetric $m\times m$ matrices  with $j,l=1,\dots,m$. Consider the Carnot group $\left(\setR^m\times\setR^n,\cdot,\delta_{\lambda}\right)$ where the operation is 
	\begin{equation}\label{eqn:Product}
	\begin{aligned}
	(x_1,\dots,x_m,y_1,\dots,y_n)\cdot \left(\tilde x_1,\dots,\tilde x_m, \tilde y_1,\dots,\tilde y_n\right) := \\
	\left(x_1+\tilde x_1,\dots,x_m+\tilde x_m,y_1+\tilde y_1+\frac{1}{2}B^1_{jl}\tilde x_j x_l,\dots, y_n+\tilde y_n+\frac{1}{2}B^n_{jl}\tilde x_j x_l\right),
	\end{aligned}
	\end{equation} 
	and the dilations are
	\begin{equation}
	\delta_{\lambda}(x_1,\dots,x_m,y_1,\dots,y_n):=(\lambda x_1,\dots,\lambda x_m,\lambda^2 y_1,\dots,\lambda^2 y_n),
	\end{equation}
	for every $\lambda >0$.

	In the Carnot groups defined above we call $X_j$, with $j=1,\dots,m$, the left-invariant vector fields that agree with $\partial_{x_j}$ at the origin. We call $Y_k$, with $k=1,\dots,n$, the left-invariant vector fields that agree with $\partial_{y_k}$ at the origin. It holds 
	\begin{equation}\label{rem:LeftInvariantVectorFields}
	\begin{aligned}
	X_j&=\partial_{x_j}+\frac{1}{2}B^k_{jl}x_l\partial_{y_k}, \\
	Y_k&=\partial_{y_k}.
	\end{aligned}
	\end{equation}
	We shall consider the two following homogeneous subgroups
	\begin{equation}\label{def:LandW}
	\mathbb L:=\{(x_1,0,\dots,0)\}, \qquad \mathbb W:=\{(0,x_2,\dots,x_m,y_1,\dots,y_n)\}.
	\end{equation}
	In what follows $\tilde{\phi}:\mathbb W\to \mathbb L$ will be an intrinsic $L$-Lipschitz function 
	(\autoref{def:IntrinsicLipschitz})  and $\phi: \setR^{m+n-1}\to \setR$ is defined according to 
	\begin{equation}\label{eqn:Phi}
	\tilde{\phi}(0,x_2,\dots,x_m,y_1,\dots,y_n)=(\phi(x_2,\dots,x_m,y_1,\dots,y_n),0,\dots,0).
	\end{equation}
	The following vector fields on $\mathbb W$ are strictly related to the intrinsic gradient of a function, see \cite[Section 5]{DiDonato18}.
	\begin{definition}\label{def:VectorOnW}
		Given $\tilde{\phi}$ and $\phi$ as in $\eqref{eqn:Phi}$ we define, for $j=2,\dots,m$, the vector fields at $\mathbb W$ on $\bar x:= (0,x_2,\dots,x_m,y_1,\dots,y_n)$ as
		\begin{equation}\label{eqn:DPhi}
		D_j^{\phi}|_{\bar x} := X_j|_{\bar x}+\phi(x_2,\dots,x_m,y_1,\dots,y_n) B^k_{j1}Y_k|_{\bar x}.
		\end{equation}
	\end{definition}
	\begin{definition}\label{def:LengthOfACurve}
		We will say that an absolutely continuous curve $
		\tilde{\gamma}:I\to\mathbb W$ 
		is \emph{horizontal for the family of vector fields} $\{D_j^{\phi}\}_{j=2,\dots,m}$, if there exist $(a_2(t),\dots,a_m(t))\in L^1(I;\mathbb R^{m-1})$ such that 
		\begin{equation}\label{eqn:Controls}
		\tilde{\gamma}'(t)=a_j(t)D_j^{\phi}|_{\tilde{\gamma}(t)}, \qquad \mbox{for a.e.}\quad t\in I.
		\end{equation}
		Then the $\phi$-\emph{length} of $\tilde{\gamma}$ is defined as
		\begin{equation}\label{eqn:LengthPhi}
		\ell_{\phi}(\tilde{\gamma}):=\int_I\sqrt{a_2(s)^2+\dots+a_m(s)^2}\de s.
		\end{equation}
	\end{definition}
	\begin{remark}
		Notice that if 
		\begin{equation}\label{eqn:TildeGamma}
		\tilde{\gamma}(t):=(0,x_2(t),\dots,x_m(t),y_1(t),\dots y_n(t)),
		\end{equation}
		then 
		\begin{equation}\label{eqn:LengthInCoordinates}
		\ell_{\phi}(\tilde{\gamma})=\int_I\sqrt{x_2'(s)^2+\dots+x_m'(s)^2}\de s.
		\end{equation}
	\end{remark}
	\begin{remark}\label{rem:HeisAsStep2}
	Using the notation in Section \ref{sub:CarnotStep2}, the Heisenberg group $\mathbb H^{\bar n}$ is obtained when $m=2\bar{n}$, $n=1$ and $B^1_{ij}=1$ if and only if $i=j+\bar{n}$, otherwise it is zero.
	\end{remark}
	
	\subsection{Length comparison for Carnot groups of step 2}
	In this subsection we will show that for Carnot groups of step 2, the length measured with the left-invariant homogeneous distance $d$ in the group $\mathbb G$ of the curve $\tilde{\gamma}\cdot \tilde{\phi}(\tilde{\gamma})$ is controlled from above by $\ell_{\phi}(\tilde{\gamma})$. 
\begin{remark}\label{rem:Length}
	For the general theory of sub-Riemannian manifolds, including the Finsler case, one can check \cite[Chapter 2]{LD17b}. We recall that in $\mathbb G$ we have two interpretations for the length of an absolutely continuous curve. Indeed, as in any Carnot-Carathéodory space, if the distance $d$ on $\mathbb G$ is induced by a norm $\|\cdot\|$ on the horizontal bundle $\mathbb V_1$ of $\mathbb G$, the length of a continuous curve $\gamma:I\to \mathbb G$ equals the following values
	\begin{equation}\label{def:Length}
	\mbox{length}(\gamma):=\sup\left\{\sum_{i=1}^n d(\gamma(s_{i-1}),\gamma(s_i))\right\}=\int_I \bold \|\gamma'(t)\|\de t,
	\end{equation}
	where the sup is over the partitions $\sqcup_{i=0}^n [s_{i},s_{i+1}]$ of $I$.

\end{remark}
The proof of the forthcoming proposition was pointed out to us by F\"assler and Orponen in the Heisenber group and it is substantially contained in \cite[Proposition 3.8.]{CMPSC14}. We present here a general proof for step 2 groups. 
\begin{proposition}\label{prop:OnB}
	Let $\mathbb G$ be a step 2 Carnot group with the choice of coordinates as in Section \ref{sub:CarnotStep2} and $\mathbb W$ and $\mathbb L$ as in \eqref{def:LandW}. Let $\tilde{\phi}:\mathbb W\to\mathbb L$ be intrinsic $L$-Lipschitz. Set $\phi,D_j^{\phi},\ell_{\phi}$ as in \eqref{eqn:Phi}, \eqref{eqn:DPhi}, and \eqref{eqn:LengthPhi}, respectively. If $\tilde{\gamma}:I\to\mathbb W$ is horizontal with respect to $\{D_j^{\phi}\}_{j=2,\dots,m}$, then 
	$$
	{\rm length}(\tilde{\gamma}\cdot\tilde{\phi}(\tilde{\gamma})) \leq C\cdot \ell_{\phi}(\tilde{\gamma}),
	$$
	where $C=C(\mathbb G,L)$.
	
	Moreover, if the norm of the controls $a_j(t)$ of $\tilde{\gamma}$ as in \autoref{def:LengthOfACurve} are bounded by $K$, the projection on the first component of the curve $s\mapsto \tilde{\phi}(\tilde{\gamma}(s))$ is $L'$-Lipschitz, with $L'=L'(L,K,\mathbb G)$.
\end{proposition}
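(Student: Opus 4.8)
The plan is to lift $\tilde\gamma$ to the graph of $\tilde\phi$ and to show that this lift is a \emph{horizontal} curve of $\mathbb{G}$ with an explicit horizontal velocity; the whole statement then reduces to a pointwise bound on how fast $f(t):=\phi(\tilde\gamma(t))$ can vary. Set $\Gamma(t):=\tilde\gamma(t)\cdot\tilde\phi(\tilde\gamma(t))\in\mathrm{graph}(\tilde\phi)$. First I would write $\Gamma$ in the exponential coordinates of Section~\ref{sub:CarnotStep2} via the product \eqref{eqn:Product}: its horizontal coordinates are $(f(t),x_2(t),\dots,x_m(t))$ and its vertical coordinates are $y_k(t)+\tfrac12 f(t)B^k_{1l}x_l(t)$. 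Differentiating, and using both the explicit form \eqref{rem:LeftInvariantVectorFields} of the left-invariant fields and the dynamics \eqref{eqn:Controls} of $\tilde\gamma$ for the $D^{\phi}_j$, a direct check — in which skew-symmetry of the $B^k$ forces every vertical component to cancel — shows that, at each $t$ where $f$ is differentiable,
$$
\Gamma'(t)=f'(t)\,X_1|_{\Gamma(t)}+\sum_{j=2}^m a_j(t)\,X_j|_{\Gamma(t)}.
$$
In particular $\Gamma$ is horizontal and, by \eqref{def:Length}, $\mathrm{length}(\Gamma)=\int_I\big\|f'X_1+\sum_j a_jX_j\big\|$. This is exactly why the fields $D^{\phi}_j$ were introduced, and it reduces both assertions to controlling $|f'|$ by $\sqrt{\sum_j a_j^2}$.

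To control $f$ I would feed the two graph points $p=\Gamma(s)$ and $q=\Gamma(t)$ into the cone condition of \autoref{def:IntrinsicLipschitz} (with the cone of \autoref{def:Cone}). Splitting $p^{-1}q=(p^{-1}q)_\mathbb{W}\cdot(p^{-1}q)_\mathbb{L}$ in the coordinates above gives $d((p^{-1}q)_\mathbb{L},e)\approx|f(t)-f(s)|$, while $(p^{-1}q)_\mathbb{W}$ has horizontal part $x_j(t)-x_j(s)$ and a vertical part $w^{y_k}$. The crucial computation is that, after inserting the dynamics of $\tilde\gamma$ and using skew-symmetry of $B^k$, the first-order terms of $w^{y_k}$ cancel, leaving a genuinely second-order quantity plus a term weighted by the oscillation of $f$; thus $|w^{y_k}|\le C\big(|t-s|\,\omega(|t-s|)+K^2|t-s|^2\big)$, where $\omega$ is the modulus of continuity of the continuous function $f$ and $K$ bounds the controls. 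Hence $d((p^{-1}q)_\mathbb{W},e)\le C\big(K|t-s|+\omega(|t-s|)^{1/2}|t-s|^{1/2}\big)$, and the intrinsic Lipschitz inequality $d((p^{-1}q)_\mathbb{L},e)\le L\,d((p^{-1}q)_\mathbb{W},e)$ yields the self-improving estimate $\omega(\delta)\le CL\big(K\delta+\omega(\delta)^{1/2}\delta^{1/2}\big)$. Solving this quadratic inequality in $\omega(\delta)^{1/2}$ upgrades the a priori $1/2$-H\"older control to $\omega(\delta)\le L'\delta$ with $L'=L'(L,K,\mathbb{G})$, which is the ``Moreover'' part.

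For the length inequality I would first reparametrize $\tilde\gamma$ by its $\phi$-length \eqref{eqn:LengthInCoordinates}, which alters neither $\mathrm{length}(\Gamma)$ nor $\ell_{\phi}(\tilde\gamma)$ and makes the controls bounded by $1$; by the previous paragraph $f$ is then Lipschitz, hence differentiable a.e. Running the cone estimate infinitesimally at a differentiability point $t$ — where $\Delta x_j\sim a_j(t)h$, $\Delta f\sim f'(t)h$, and the skew-symmetry kills the order-$h^2$ term $B^k_{jl}a_ja_l$, so $w^{y_k}\sim-\tfrac12 B^k_{1l}f'(t)a_l(t)h^2$ — gives $|f'(t)|\le CL\big(\sum_j|a_j(t)|+\sqrt{|f'(t)|\sum_l|a_l(t)|}\big)$. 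The same self-improving trick now yields $|f'(t)|\le C(L,\mathbb{G})\sqrt{\sum_j a_j(t)^2}$, with a constant \emph{independent of} $K$. Substituting this into the length integral from the first step produces $\mathrm{length}(\Gamma)\le C(\mathbb{G},L)\int_I\sqrt{\sum_j a_j^2}=C(\mathbb{G},L)\,\ell_{\phi}(\tilde\gamma)$.

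I expect the main obstacle to be the estimate of the vertical component $w^{y_k}$ of the $\mathbb{W}$-projection: one must exhibit the cancellation of its first-order part, so that the homogeneous norm of $(p^{-1}q)_\mathbb{W}$ is of order $|t-s|$ rather than the naive $|t-s|^{1/2}$, and then close the argument with the self-improving inequality that turns the $1/2$-H\"older control coming directly from the cone condition into a genuine Lipschitz bound. This is precisely the delicate computation carried out in the Heisenberg case in \cite[Proposition 3.8]{CMPSC14}, which here I would redo for a general step-$2$ group using \eqref{eqn:Product}, \eqref{rem:LeftInvariantVectorFields} and \eqref{eqn:DPhi}.
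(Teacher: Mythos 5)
Your proposal is correct, and its computational core is the same as the paper's: the decisive points in both arguments are (i) the cancellation of the first-order part of the vertical components of $\pi_{\mathbb W}\left(\Phi(\tilde{\gamma}(t))^{-1}\Phi(\tilde{\gamma}(t_1))\right)$ after inserting the horizontality ODE and the skew-symmetry of the $B^k$ (your $w^{y_k}$ is exactly the paper's $\sigma_k$ in \eqref{eqn:ComptOnSigma}), and (ii) the self-improving quadratic inequality coming from feeding this back into the cone condition, which upgrades the a priori $1/2$-H\"older control on $\phi\circ\gamma$ to a Lipschitz one (the paper's \eqref{eqn:ForLip}--\eqref{eqn:InequalityOnF}). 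Where you genuinely diverge is in how the length bound is assembled. The paper works entirely at the level of finite differences: it proves $d(\Phi(\tilde{\gamma}(t)),\Phi(\tilde{\gamma}(t_1)))\leq C\,\ell_{\phi}(\tilde{\gamma}|_{[t,t_1]})$ for every subinterval and concludes via the sup-over-partitions definition \eqref{def:Length}, never differentiating $f=\phi\circ\gamma$ and never needing the lifted curve to be horizontal. You instead take a differential route: you identify the horizontal velocity $\Gamma'=f'X_1+\sum_j a_jX_j$ of the lift (your coordinate computation is correct), reparametrize by $\phi$-arclength to make the controls bounded, invoke the ``Moreover'' part to get $f$ Lipschitz and hence a.e.\ differentiable, derive the pointwise bound $|f'|\leq C(L,\mathbb G)\sqrt{\sum_j a_j^2}$ from an infinitesimal version of the cone estimate, and integrate. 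This buys you more — the explicit horizontal velocity of the graph curve and a pointwise, $K$-independent derivative bound — at the cost of two technical points you should make explicit: the infinitesimal cone estimate must be run at points that are simultaneously differentiability points of $f$ and Lebesgue points of the controls $a_j$ (otherwise the asymptotics $\Delta x_j\sim a_j(t)h$ and the $o(h^2)$ control of $w^{y_k}$ are not justified), and the arclength reparametrization must handle the intervals where $\sqrt{\sum_j a_j^2}$ vanishes (there $\tilde{\gamma}$ is constant by the ODE, so nothing is lost). The paper's finite-difference argument avoids both issues, which is why it is the more economical presentation, but your version is a valid and somewhat more informative alternative.
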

\begin{proof}
	Set $\gamma:I\to \setR^{m+n-1}$ be  the curve 
	$$
	\gamma(t):=(x_2(t),\dots,x_m(t),y_1(t),\dots,y_n(t)),
	$$
	where we use the notation \eqref{eqn:TildeGamma}.
	By the fact that $\tilde{\gamma}$ is horizontal with respect to $\{D^{\phi}_j\}_{j=2,\dots,m}$ we get by easy computations that for each $k=1,\dots,n$ 
	\begin{equation}\label{eqn:Horizontal}
	y_k'(t)=x_j'(t)\left(\frac{1}{2}B_{jl}^kx_l(t)+\phi(\gamma(t))B_{j1}^k\right), \qquad \mbox{for a.e.} \quad t\in I,
	\end{equation}
	where we sum over $j$ and $l$ from $2$ to $m$. Now we consider the curve $\tilde{\gamma}$ between two intermediary times $t<t_1$ and we claim that 
	\begin{equation}\label{eqn:EstSum}
	\sum_{i=2}^m |x_i(t_1)-x_i(t)| \leq C_1\ell_{\phi}\left(\tilde{\gamma}|_{[t,t_1]}\right),
	\end{equation}
	where the constant $C_1=C_1(m)$. Indeed, this is a consequence of the fundamental theorem of calculus, Cauchy-Schwarz and  \eqref{eqn:LengthInCoordinates}.
	
	Set $
	\Phi(\tilde{\gamma}(\cdot)):=\tilde{\gamma}(\cdot)\tilde{\phi}(\tilde{\gamma}(\cdot))$.
	By the definition of length it suffices to show that for all $[t,t_1]\subseteq I$ there exists a constant $C=C(L,\mathbb G)$ such that
	\begin{equation}\label{eqn:Final}
	d(\Phi(\tilde{\gamma}(t)),\Phi(\tilde{\gamma}(t_1)) \leq C\cdot \ell_{\phi}(\tilde{\gamma}|_{[t,t_1]}).
	\end{equation}
	By the fact that $\phi$ is intrinsic Lipschitz and \cite[Proposition 2.3.4.]{FMS14} one has that, setting $\|\cdot\|$ the homogeneous norm on $\mathbb G$ associated to $d$ (see Section \ref{sub:Carnot}), there exists a constant $C_0=C_0(L)$ such that 
	\begin{equation}\label{eqn:FinalMinusOne}
	d(\Phi(\tilde{\gamma}(t)),\Phi(\tilde{\gamma}(t_1)) \leq C_0\|\pi_{\mathbb W}\left(\Phi(\tilde{\gamma}(t))^{-1}\Phi(\tilde{\gamma}(t_1))\right)\|.
	\end{equation}
	Then we leave to the reader to verify the algebraic equality
	\begin{equation}\label{eqn:CompOnProject}
	\pi_{\mathbb W}\left(\Phi(\tilde{\gamma}(t))^{-1}\Phi(\tilde{\gamma}(t_1))\right) = \tilde{\phi}(\tilde{\gamma}(t))^{-1}\tilde{\gamma}(t)^{-1}\tilde{\gamma}(t_1)\tilde{\phi}(\tilde{\gamma}(t)).
	\end{equation}
	By exploiting the formula for the group law, it holds
	\begin{equation}\label{eqn:615}
	\begin{aligned}
	&\tilde{\phi}(\tilde{\gamma}(t))^{-1}\tilde{\gamma}(t)^{-1}\tilde{\gamma}(t_1)\tilde{\phi}(\tilde{\gamma}(t)) = \\
	&=(0,x_2(t_1)-x_2(t),\dots,x_m(t_1)-x_m(t),\sigma_1(t_1,t),\dots,\sigma_n(t_1,t)),
	\end{aligned}
	\end{equation}
	where for each $k=1,\dots,n$ we have
	\begin{equation}
	\sigma_k(t_1,t):=y_k(t_1)-y_k(t)+B_{1j}^k\phi(\gamma(t))(x_j(t_1)-x_j(t))-\frac{1}{2}B_{jl}^kx_j(t_1)x_l(t),
	\end{equation}
	where the sums on indexes $j$ and $l$ run from $2$ to $m$. 
	
	Then by \eqref{eqn:615} and the fact that $\|\cdot\|$ is equivalent to any other homogeneous norm on $\mathbb G$, we have that 
	\begin{equation}\label{eqn:Projection}
	\|\tilde{\phi}(\tilde{\gamma}(t))^{-1}\tilde{\gamma}(t)^{-1}\tilde{\gamma}(t_1)\tilde{\phi}(\tilde{\gamma}(t))\| \sim \sum_{i=1}^m |x_i(t_1)-x_i(t)|+\sum_{k=1}^n \sqrt{|\sigma_k(t_1,t)|}.
	\end{equation}
	Using \eqref{eqn:Horizontal} and that $B_{jl}^kx_j(t)x_l(t)=0$ by skew-symmetry of $B^k$ we can rewrite $\sigma_k(t_1,t)$ as follows
	\begin{equation}\label{eqn:ComptOnSigma}
	\begin{aligned}
	\sigma_k(t_1,t)&=\left(\int_t^{t_1} y_k'(\xi)\de \xi\right)+B_{1j}^k\phi(\gamma(t))(x_j(t_1)-x_j(t))-\frac{1}{2}B_{jl}^kx_j(t_1)x_l(t)  \\
	&=\int_t^{t_1}\left(x_j'(\xi)B_{j1}^k\left(\phi(\gamma(\xi))-\phi(\gamma(t))\right)+\frac{1}{2}x_j'(\xi)B_{jl}^k(x_l(\xi)-x_l(t))\right)\de\xi.
	\end{aligned}
	\end{equation}
	Set
	$$
	f(t_1,t):=\sup_{\xi\in[t,t_1]}|\phi(\gamma(\xi)-\phi(\gamma(t)))|.
	$$
	It follows from \eqref{eqn:ComptOnSigma}, \eqref{eqn:EstSum} and Cauchy-Schwarz inequality that 
	\begin{equation}\label{eqn:Sigma}
	|\sigma_k(t_1,t)|\leq C_2\left(\ell_{\phi}(\tilde{\gamma}|_{[t,t_1]})+f(t_1,t)\right)\ell_{\phi}(\tilde{\gamma}|_{[t,t_1]}) ,
	\end{equation}
	where $C_2=C_2(m, B)$ and $B:=\max{|B_{jl}^k|}$.
	Now for each $\xi\in[t,t_1]$ we get by the fact $\phi$ is intrinsic Lipschitz, \eqref{eqn:CompOnProject}, \eqref{eqn:Projection}, \eqref{eqn:EstSum} and \eqref{eqn:Sigma} with $\xi$ instead of $t_1$, that
	\begin{equation}\label{eqn:ForLip}
	\begin{aligned}
	|\phi(\gamma(\xi))-\phi(\gamma(t))|&\leq L\|\pi_{\mathbb W}\left(\Phi(\tilde{\gamma}(t))^{-1}\Phi(\tilde{\gamma}(\xi))\right)\| \\
	&=\|\tilde{\phi}(\tilde{\gamma}(t))^{-1}\tilde{\gamma}(t)^{-1}\tilde{\gamma}(\xi)\tilde{\phi}(\tilde{\gamma}(t))\| \sim \sum_{i=1}^m |x_i(\xi)-x_i(t)|+\sum_{k=1}^n \sqrt{|\sigma_k(\xi,t)|} \\  &\leq C_3\left(\ell_{\phi}(\tilde{\gamma}|_{[t,\xi]})+\sqrt{f(\xi,t)}\sqrt{\ell_{\phi}(\tilde{\gamma}|_{[t,\xi]})}\right),
	\end{aligned}
	\end{equation}
	where $C_3=C_3(m,B,L)$. Now passing to the supremum as $\xi\in[t,t_1]$ in both sides we get
	$$
	f(t_1,t)\leq C_3\left(\ell_{\phi}(\tilde{\gamma}|_{[t,t_1]})+\sqrt{f(t_1,t)}\sqrt{\ell_{\phi}(\tilde{\gamma}|_{[t,t_1]})}\right),
	$$
	from which there exists $C_4=C_4(m,B,L)$ such that 
	\begin{equation}\label{eqn:InequalityOnF}
	f(t_1,t)\leq C_4 \ell_{\phi}\left(\tilde{\gamma}|_{[t,t_1]}\right).
	\end{equation}
	Finally by chaining \eqref{eqn:FinalMinusOne}, \eqref{eqn:CompOnProject}, \eqref{eqn:Projection}, \eqref{eqn:EstSum}, \eqref{eqn:Sigma}, and \eqref{eqn:InequalityOnF}, we get \eqref{eqn:Final} which was what we wanted. For the second part of the lemma we just chain \eqref{eqn:ForLip} and \eqref{eqn:InequalityOnF} with $\xi$ instead of $t_1$, and use the fact that $\ell_{\phi}(\tilde{\gamma}|_{[t,\xi]})$ is bounded above by $C(K,m)|\xi-t|$ by the definition of $\ell_{\phi}$ in \eqref{eqn:LengthPhi}.
\end{proof}
\begin{remark}
	The second part of \autoref{prop:OnB} recovers also the  statement of \cite[Proposition 3.6]{DiDonato19}. Notice that results that are similar to \autoref{prop:OnB} have been proved by Kozhevnikov. In particular, in \cite[Proposition 4.2.16]{Koz15} it is proved, in the general setting of Carnot groups, a characterization of intrinsic Lipschitz graphs by means of metric properties of integral curves, on $\mathbb W$, of some analog of ours $\{D_j^{\phi}\}$ defined in \eqref{eqn:DPhi}. In particular, the implications $1\Rightarrow 2$ and $1\Rightarrow 3$ in \cite[Proposition 4.2.16]{Koz15} show the statement of \autoref{prop:OnB} but just for horizontal curves $\tilde{\gamma}:I\to\mathbb W$ with constant controls $a_j(t)\equiv a_j$ in \eqref{eqn:Controls}. A similar result was already known from \cite[Theorem 1.2]{CM06}.
\end{remark}
\subsection{Equivalence of intrinsic distance and induced distance on intrinsic Lipschitz graphs in the Heisenberg groups}\label{subsec:Equivalence}
\begin{definition}\label{def:IntrinsicDistance}
	Given $\tilde{\phi}$ and $\phi$ as in \autoref{eqn:Phi} we define the intrinsic length distance on the graph $\Gamma:={\rm graph}(\tilde{\phi})=\{w\tilde{\phi}(w):w\in\mathbb W\}\subseteq \mathbb G$ as follows
	\begin{equation}\label{eqn:DGamma}
	d^{\Gamma}(x,y):=\inf\{\mbox{length}(\gamma)|\gamma:[0,1]\to \Gamma, \gamma(0)=x, \gamma(1)=y, \quad \gamma \quad \mbox{horizontal}\}.
	\end{equation}
\end{definition}
\begin{remark}\label{rem:SubRiemannianDistance} 
	Up to a globally bi-Lipschitz change of norm, we can suppose to work with a left-invariant homogeneous distance $d$ on $\mathbb G$ coming from a scalar product $\bold g$ on the horizontal bundle $\mathbb V_1$. Notice that if $\Gamma$ is a smooth submanifold of $\mathbb G$ and the horizontal bundle $\mathbb V_1$ intersects the tangent bundle of $\Gamma$ in a bracket generating distribution, then the distance $d^{\Gamma}(x,y)$ is exactly the sub-Riemannian distance, let us call it $d_{\rm int}(x,y)$, associated to the sub-Riemannian structure $\left(\Gamma,\mathbb V_1\cap T\Gamma,\bold g|_{(\mathbb V_1\cap T\Gamma)\times (\mathbb V_1\cap T\Gamma)}\right)$. 
\end{remark}
We now stress that in the specific case of the Heisenberg groups the induced distance $d$ is bi-Lipschitz equivalent to $d^{\Gamma}$ defined in \eqref{eqn:DGamma}. The proof was suggested to us by F\"assler and Orponen.
\begin{proposition}\label{prop:EquivalentDistances}
	With the same assumptions and notation as in \autoref{prop:OnB}, if $\mathbb G=\mathbb H^n$ with $n\geq 2$, then 

	$$
	d(x,y) \sim d^{\Gamma}(x,y), \qquad \forall x,y \in \Gamma,
	$$
	where $d^{\Gamma}$ is defined in \eqref{eqn:DGamma} and $d$ is the induced distance, restriction of the one in $\mathbb H^n$.
\end{proposition}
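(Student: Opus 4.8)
We first record the trivial inequality. Any curve $\gamma$ that is horizontal in $\mathbb{H}^n$ and joins $x,y\in\Gamma$ satisfies $d(x,y)\le{\rm length}(\gamma)$; taking the infimum over the horizontal curves lying in $\Gamma$ that appear in \eqref{eqn:DGamma} gives $d(x,y)\le d^\Gamma(x,y)$. The content of the proposition is therefore the reverse inequality $d^\Gamma(x,y)\le C\,d(x,y)$, and the plan is to reduce it, through \autoref{prop:OnB}, to a connectivity estimate on $\mathbb{W}$.

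For the reduction, recall that a curve in $\Gamma$ is horizontal in $\mathbb{H}^n$ precisely when it is of the form $\tilde{\gamma}\cdot\tilde{\phi}(\tilde{\gamma})$ with $\tilde{\gamma}:I\to\mathbb{W}$ horizontal for the family $\{D_j^{\phi}\}_{j=2,\dots,m}$ of \autoref{def:VectorOnW}; this is what the identity \eqref{eqn:Horizontal} together with \autoref{def:LengthOfACurve} encode. Writing $d_\phi(w_1,w_2)$ for the infimum of $\ell_\phi(\tilde{\gamma})$ over such $\tilde{\gamma}$ joining $w_1$ to $w_2$, \autoref{prop:OnB} gives, after passing to the infimum,
$$ d^\Gamma(\tilde{\gamma}_1,\tilde{\gamma}_2)\le C\,d_\phi(w_1,w_2), \qquad \tilde{\gamma}_i:=w_i\cdot\tilde{\phi}(w_i). $$
On the other hand $\pi_{\mathbb W}$ is continuous and commutes with dilations, so $\|\pi_{\mathbb W}(g)\|/\|g\|$ is dilation–invariant, hence bounded; with $p=w_1\tilde{\phi}(w_1)$, $q=w_2\tilde{\phi}(w_2)$ this yields $\|\pi_{\mathbb W}(p^{-1}q)\|\le C\|p^{-1}q\|\sim C\,d(p,q)$. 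Thus the proposition follows once one proves the connectivity estimate $d_\phi(w_1,w_2)\le C\,\|\pi_{\mathbb W}(p^{-1}q)\|$, whose right–hand side, by the computations \eqref{eqn:615}–\eqref{eqn:Projection}, is comparable to $\sum_{i=2}^m|x_i(w_2)-x_i(w_1)|+\sqrt{|\sigma_1(w_1,w_2)|}$, where $\sigma_1$ is the single vertical increment (\autoref{rem:HeisAsStep2}).

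To build an admissible $\tilde{\gamma}$ of small $\ell_\phi$–length I would proceed in two stages. First, join $w_1$ to an intermediate point having the horizontal coordinates of $w_2$ by moving $x_2,\dots,x_m$ affinely; by \eqref{eqn:LengthInCoordinates} this costs $\ell_\phi\sim\sum_{i=2}^m|x_i(w_2)-x_i(w_1)|$, and by \eqref{eqn:Horizontal} it leaves a residual discrepancy $\rho$ in the vertical coordinate. Second, I would cancel $\rho$ by a closed loop in two horizontal directions $D_i^{\phi},D_j^{\phi}$ whose bracket has nonzero vertical component: by \eqref{eqn:Horizontal} a loop of $\ell_\phi$–length $\ell$ produces a vertical increment of order $\ell^2$, so the residual is absorbed at cost $\ell_\phi\sim\sqrt{|\rho|}$. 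Since $|\rho|$ is controlled by $|\sigma_1|$ up to the already–estimated horizontal increments (whose square roots are themselves dominated by the horizontal term), summing the two stages gives the desired connectivity estimate.

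The crux — and the only place where $n\ge 2$ enters — is the second stage: the existence of a pair $D_i^{\phi},D_j^{\phi}$ with a vertical bracket. For $\mathbb{H}^n$ there are $2n-1$ fields $D_2^{\phi},\dots,D_{2n}^{\phi}$ on the $2n$–dimensional $\mathbb{W}$, and for $n\ge 2$ they are step–$2$ bracket generating (\cite[Theorem 1.1]{TY04}, see also \autoref{prop:MainProp2}), which furnishes such a pair; for $n=1$ there is only $D_2^{\phi}$, no vertical bracket exists, and the equivalence genuinely fails. The technical difficulty is that $\phi$ is merely Hölder continuous in Euclidean coordinates (\autoref{rem:IntrinsicIsContinuous}), so the vertical displacement produced by the loop must still be shown to scale like $\ell^2$ despite the non-smooth coefficient $\phi$ in \eqref{eqn:Horizontal}. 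This is handled exactly by the self–improving argument \eqref{eqn:ForLip}–\eqref{eqn:InequalityOnF} already carried out for \autoref{prop:OnB}: it shows that the oscillation of $\phi$ along the loop is dominated by $\ell_\phi$ and therefore does not spoil the quadratic scaling. Assembling this estimate with the reduction of the second paragraph completes the proof.
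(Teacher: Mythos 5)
Your proposal is correct in strategy and is essentially the route the paper takes: both reduce the problem, via \autoref{prop:OnB}, to producing a $\{D_j^{\phi}\}$-horizontal curve in $\mathbb W$ joining the two base points with $\ell_{\phi}$-length controlled by $\|\pi_{\mathbb W}(p^{-1}q)\|$, and both exploit the fact that for $n\geq 2$ the fields $D_j^{\phi}=X_j$ with $j\neq n+1$ span an embedded copy of $\mathbb H^{n-1}$, so the vertical discrepancy can be absorbed at square-root cost. Your ``affine horizontal motion plus correcting loop'' is just a hands-on version of the paper's ``geodesic in $\mathbb H^{n-1}$'', and your appeal to \eqref{eqn:ForLip}--\eqref{eqn:InequalityOnF} to tame the oscillation of $\phi$ is exactly the role of the second part of \autoref{prop:OnB} in the paper.

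The one step your write-up leaves unaddressed is the \emph{existence} of the Stage-1 curve. If the affine motion changes the $(n+1)$-th coordinate, then by \eqref{eqn:Horizontal} the vertical coordinate must solve an ODE whose right-hand side contains $\phi(\gamma(t))$, hence depends on the unknown $y_1(t)$ through a merely continuous (H\"older) coefficient; ``move $x_2,\dots,x_m$ affinely'' does not by itself produce a horizontal curve. The paper isolates this direction as a separate arc $\tilde{\gamma}_1$, invokes Peano's theorem for local existence of \eqref{eqn:System}, and then uses the $L'$-Lipschitz bound on $s\mapsto\phi(\tilde{\gamma}_1(s))$ (second part of \autoref{prop:OnB}) to get the a priori estimate $|\tau(s)|\leq L's^2$, which both yields global existence and bounds the resulting vertical drift quadratically. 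You need this existence-plus-quadratic-drift argument for your Stage 1 as well; once it is inserted, your accounting (drift of order $h^2$, hence $\sqrt{|\rho|}\lesssim\sqrt{|\sigma_1|}+h$) closes the proof. A further simplification available to you: for $n\geq 2$ the correcting loop can be run in a pair $(X_j,X_{j+n})$ with $2\leq j\leq n$, in which the coefficient $B^1_{j1}$ of $\phi$ in \eqref{eqn:Horizontal} vanishes, so the loop's vertical increment is the exact signed area and no non-smoothness issue arises there at all.
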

\begin{proof} 
	First of all notice that, using the notation in \eqref{eqn:DPhi} and taking into account \autoref{rem:HeisAsStep2}, if $\mathbb G=\mathbb H^n$, then $D^{\phi}_j|_{\bar x}=X_j|_{\bar x}$ for all $j=2,\dots,2n$ and $j\neq n+1$, while $D^{\phi}_{n+1}|_{\bar x}=X_{n+1}|_{\bar x}+\phi(x_2,\dots,x_{2n},y_1)Y_1|_{\bar x}$. We also have $[X_j,X_{n+j}]=Y_1$ for every $j=1,\dots,n$ and all the other commutators are zero.  By definition of $d^{\Gamma}$ \eqref{eqn:DGamma}, exploiting the definition of length \eqref{def:Length} and the triangular inequality, we get
	$$
	d(x,y)\leq d^{\Gamma}(x,y), \qquad \forall x,y \in \Gamma.
	$$ 
	Now we want to prove the opposite inequality up to a constant. First of all, by a left translation, we can assume $x=0$ and $y=w\tilde{\phi}(w)$ for $w\in \mathbb W$. It holds  that 
	\begin{equation}\label{eqn:Step}
	d(x,y)=d(0,y)=\|w\tilde{\phi}(w)\|_{d}\geq C_0\|w\|_{d},
	\end{equation}
	where $\|\cdot\|_{d}$ is the homogeneus norm associated to $d$ and $C_0=C_0(\mathbb W,\mathbb H)$, see \cite[Theorem 2.2.2.]{FMS14}. From now on, in this proof, we will set $\|\cdot\|_d:=\|\cdot\|$.
	
	We claim that we can conclude if we show that for each $w\in \mathbb W$ there exists $\tilde{\gamma}\subseteq \mathbb W$, connecting $0$ to $w$, horizontal for $\{D_j^{\phi}\}_{j=2,\dots,2n+1}$, such that 
	\begin{equation}\label{eqn:Step2}
	{\ell}_{\phi}(\tilde{\gamma})\leq C_1\| w\|,
	\end{equation}
	for some constant $C_1$. Indeed, if \eqref{eqn:Step2} holds, then from the first part of \autoref{prop:OnB} and \eqref{eqn:Step} we get that, setting $\Phi(\tilde{\gamma}):=\tilde{\gamma}\tilde{\phi}(\tilde{\gamma})$,
	$$
	{\rm length}(\Phi(\tilde{\gamma})) \leq C_2d(x,y),
	$$
	where $C_2$ is a constant, and $\Phi(\tilde{\gamma})$ is a curve contained in $\Gamma$ connecting $x=0$ to $y=w\tilde{\phi}(w)$. Being the length of $\Phi(\tilde{\gamma})$ finite, we get that it is a horizontal curve \cite[Theorem 2.4.5.]{LD17b} and then we get 
	$$
	d^{\Gamma}(x,y)\leq C_2d(x,y),
	$$
	that finishes the proof. 
	
	Now we show the existence of $\tilde{\gamma}$, with the required properties, such that \eqref{eqn:Step2} holds. We  concatenate two curves $\tilde{\gamma}_1$ and $\tilde{\gamma}_2$, horizontal for $\{D_j^{\phi}\}_{j=2,\dots,2n+1}$, to reach $w:=(0,x_2,\dots,x_{2n},y_1)$ from $0$. Due to the fact that $\phi$ is continuous, because of \autoref{rem:IntrinsicIsContinuous}, Peano's theorem \cite[Theorem 1.1]{Hal80} ensures that there exists a local solution to  the continuous ODE
	\begin{equation}\label{eqn:System}
	\begin{cases}
		\tau'(s)&=\phi(0,\dots,0,s,0,\dots,0,\tau(s)), \\
		\tau(0)&=0,
	\end{cases}
	\end{equation}
	where $s$ in the $(n+1)$-th coordinate. Set
	$$
	\tilde{\gamma}_1(s):=(0,\dots,0,s,0,\dots,0,\tau(s)),
	$$ 
	so that by \eqref{eqn:System} it holds 
	\begin{equation}\label{eqn:BoundedControl}
	\tilde{\gamma}_1'(s)=D_{n+1}^{\phi}(\tilde{\gamma}_1(s)).
	\end{equation}
	We show that $\tau(s)$ is defined globally on $\mathbb R$, arguing similarly as in \cite[(4.1) and after]{CFO19}. Indeed, whenever $\tau(s)$ exists,
	\begin{equation}\label{eqn:EstimateOnT}
	\tau(s)=\int_0^s \tau'(\xi)\de\xi=\int_0^s\phi(\tilde{\gamma}_1(\xi))\de\xi,
	\end{equation}
	and by \eqref{eqn:BoundedControl} and the second part of \autoref{prop:OnB} we have that $s\mapsto \phi(\tilde{\gamma}_1(s))$ is $L'$-Lipschitz, with $L'=L'(L)$. By \eqref{eqn:EstimateOnT} we have 
	\begin{equation}\label{eqn:Comp}
	|\tau(s)|\leq L's^2.
	\end{equation}
	Thus, as any solution to \eqref{eqn:System} escapes every compact set \cite[Theorem 2.1]{Hal80}, we get from \eqref{eqn:Comp} that $\tau(s)$ is globally defined. Then $\tau(s)$ is defined up to $s=x_{n+1}$ and by the previous argument 
	\begin{equation}\label{eqn:Tau}
	|\tau(x_{n+1})|\leq L'x_{n+1}^2. 
	\end{equation}
	We notice that we can identify the  arbitrary point $(x_2,\dots,x_n,x_{n+2},\dots,x_{2n},y_1)$ with a point in $\mathbb H^{n-1}$. Thus we can connect the point $(0,\dots,0,\tau(x_{n+1}))$, where we just removed the first and the $(n+1)$-th coordinate from $\tilde{\gamma}_1(x_{n+1})$, to the point $(x_2,\dots,x_n,x_{n+2},\dots,x_{2n},y_1)$, by using a horizontal geodesic in $\mathbb H^{n-1}$ with respect to the Carnot-Carathéodory  distance $d_{\bold g}$ induced, on $\mathbb H^{n-1}$, by the scalar product $\bold g$ that makes $X_2,\dots,X_n,X_{n+2},\dots,X_{2n}$ orthonormal. We set $\tilde{\gamma}_2:I\to\mathbb H^n$ to be this horizontal geodesic in $\mathbb H^{n-1}$, read in $\mathbb{H}^n$. We notice that it is horizontal with respect to the family $\{D_j^{\phi}\}_{j=2,\dots,n,n+2,\dots,2n}$, because $D_j^{\phi}=X_j$ for $j=2,\dots,2n$ and $j\neq n+1$. Then we have 
	\begin{equation}\label{eqn:FIN1}
	\begin{aligned}
	{\ell}_{\phi}(\tilde{\gamma}_2)&=d_{\bold g}((0,\dots,0,\tau(x_{n+1})),(x_2,\dots,x_n,x_{n+2},\dots,x_{2n},y_1))\\
	&\leq C_3\left(|y_1-\tau(x_{n+1})|^{1/2}+\sum_{i=2,i\neq n+1}^{2n}|x_i|\right)\\
	&\leq C_4\left(|y_1|^{1/2}+\sum_{i=2}^{2n}|x_i|\right)\leq C_5\|w\|,
	\end{aligned}
	\end{equation}
	where the first equality follows by the definition of ${\ell}_{\phi}$ \eqref{eqn:LengthPhi} and the fact that $\tilde{\gamma}_2$, restrict to $\mathbb H^{n-1}$, is a $d_{\bold g}$-geodesic; the second is true because any two homogeneous norms are equivalent, the third one is true because of \eqref{eqn:Tau}, and the last one again by the fact that any two homogeneous norms are equivalent. Now we have 
	\begin{equation}\label{eqn:FIN2}
	{\ell}_{\phi}\left(\tilde{\gamma}_1|_{[0,x_{n+1}]}\right)=|x_{n+1}|\leq C_6\|w\|,
	\end{equation}
	where the first equality is true by the definition of ${\ell}_{\phi}$ and \eqref{eqn:BoundedControl} and second is true again because of the equivalence of homogeneous norms. Now if we set $\tilde{\gamma}:=\tilde{\gamma}_1|_{[0,x_{n+1}]}\star\tilde{\gamma}_2$ the concatenation of the two curves, we get that $\tilde{\gamma}$ is horizontal and summing \eqref{eqn:FIN1} and \eqref{eqn:FIN2} we get \eqref{eqn:Step2} with $C_1:=C_5+C_6$, which was what was left to prove.
\end{proof}

\subsection{Sub-Riemannian structure of a $C^{\infty}$ non-characteristic hypersurface in the Heisenberg groups}\label{1.3}

\subsubsection{The restriction of the horizontal bundle is bracket generating}
Now we are going to prove that for non-characteristic $C^{2}$-hypersurfaces (see \autoref{def:CharacteristicSurface}) in $\setH^n$, with $n\geq 2$, the intersection between the horizontal bundle and the tangent bundle is bracket generating.
This result was already known and it is a consequence of a more general one \cite[Theorem 1.1.]{TY04}. Nevertheless we give here a simple proof by making explicit computations. 
\begin{proposition}\label{prop:MainProp2}
	Consider in $\setH^n$, with $n\geq 2$, a $C^{2}$-hypersurface $S$. If $S$ has no characteristic points, then the  bundle
	\begin{equation}\label{eqn:D}
	x\mapsto \mathcal{D}_x := \mathbb V_1(x) \cap T_xS,
	\end{equation}
	gives a step-2 bracket generating distribution on the hypersurface $S$.
\end{proposition}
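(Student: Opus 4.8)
The plan is to work in the exponential coordinates of \autoref{sub:CarnotStep2} specialized to $\setH^n$ via \autoref{rem:HeisAsStep2}, and to reduce the step-$2$ bracket-generating condition to a single statement in symplectic linear algebra, where the hypothesis $n\geq 2$ enters decisively. Write $S\cap U=\{f=0\}$ locally with $\nabla f\neq 0$, so that $\mathcal{D}_x=\{\sum_{i=1}^{2n}a_iX_i(x):\sum_{i=1}^{2n}a_i\,X_if(x)=0\}$ consists of the horizontal vectors annihilating $f$. By \autoref{rem:Characteristic} the non-characteristic hypothesis means exactly that $(X_1f(x),\dots,X_{2n}f(x))\neq 0$ for every $x\in S$; hence the linear functional $a\mapsto\sum_i a_i X_if(x)$ on $\mathbb V_1(x)\cong\setR^{2n}$ is nonzero and $\dim\mathcal{D}_x=2n-1$. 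Since $\dim T_xS=2n$, the quotient $T_xS/\mathcal{D}_x$ is one-dimensional; moreover a vector $v\in T_xS$ lies in $\mathcal{D}_x$ if and only if $v\in\mathbb V_1(x)$, so the missing direction is detected precisely by the vertical $Y_1$-component (recall $T_x\setH^n=\mathbb V_1(x)\oplus\setR\,Y_1$, and $T_xS\not\subseteq\mathbb V_1(x)$ at a non-characteristic point since the two spaces have equal dimension).

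Next I would compute brackets of sections. For $C^1$ sections $V=\sum_i a_iX_i$ and $W=\sum_i b_iX_i$ of $\mathcal{D}$, using \eqref{rem:LeftInvariantVectorFields} and $[X_i,X_j]=B^1_{ij}Y_1$ one gets
\begin{equation*}
[V,W]=\underbrace{\sum_{i,j}\big(a_iX_i(b_j)-b_iX_i(a_j)\big)X_j}_{\text{horizontal}}+\Big(\sum_{i,j}B^1_{ij}\,a_ib_j\Big)Y_1.
\end{equation*}
The crucial observation is that the vertical component of $[V,W]$ at $x$ equals $\omega(a(x),b(x))$, where $\omega$ is the symplectic form on $\setR^{2n}$ with matrix $B^1$; this depends only on the pointwise values $a(x),b(x)\in\mathcal{D}_x$ and on no derivative. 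Since $V,W$ are tangent to $S$, so is $[V,W]$, and by the first paragraph $[V,W]$ represents a nonzero class in $T_xS/\mathcal{D}_x$ as soon as $\omega(a(x),b(x))\neq 0$. Choosing $C^1$ sections with prescribed values $a(x)=u$, $b(x)=v$ (possible because the $C^2$ hypersurface $S$ yields a $C^1$ distribution $\mathcal{D}$, hence $C^1$ local frames), it therefore suffices to produce $u,v\in\mathcal{D}_x$ with $\omega(u,v)\neq 0$.

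Finally I would invoke symplectic linear algebra. The condition $\omega|_{\mathcal{D}_x}\equiv 0$ says that $\mathcal{D}_x$ is an isotropic subspace of $(\setR^{2n},\omega)$, which forces $\dim\mathcal{D}_x\leq n$. But $\dim\mathcal{D}_x=2n-1$, and $2n-1>n$ precisely when $n\geq 2$; hence $\mathcal{D}_x$ is never isotropic and the required $u,v$ exist. Combining the three steps gives $\mathcal{D}_x+[\mathcal{D},\mathcal{D}]_x=T_xS$ at every $x\in S$, i.e.\ $\mathcal{D}$ is step-$2$ bracket generating. The explicit bracket computation is routine; the real content, and the only place the standing hypothesis $n\geq 2$ is used, is this last reduction to the dimension inequality $2n-1>n$, which fails exactly for $\setH^1$ (where $\mathcal{D}_x$ is a line, automatically isotropic) — explaining why the statement is restricted to $n\geq 2$.
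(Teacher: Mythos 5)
Your proof is correct, and it takes a genuinely different route from the one in the paper. The paper's argument is explicit and computational: for $n=2$ it writes down a concrete frame $\{Y_1,Y_2,Y_3\}$ of $\mathcal{D}$ built from the components $X_if$, computes the three pairwise brackets, and checks by hand that their vertical components $-2(X_1fX_2f+X_3fX_4f)$, $(X_1f)^2+(X_3f)^2-(X_2f)^2-(X_4f)^2$, $2(X_1fX_4f-X_2fX_3f)$ cannot vanish simultaneously at a non-characteristic point; the case $n>2$ is then handled by repeating the same four-variable computation on a suitable quadruple $X_i,X_j,X_{i+n},X_{j+n}$. You instead isolate the tensorial nature of the vertical part of the bracket --- the induced map $\Lambda^2\mathcal{D}_x\to T_xS/\mathcal{D}_x$ is, up to sign, the restriction to $\mathcal{D}_x$ of the symplectic form $\omega$ with matrix $B^1$ --- and reduce everything to the linear-algebra fact that an isotropic subspace of a nondegenerate symplectic $2n$-dimensional space has dimension at most $n$, which $\dim\mathcal{D}_x=2n-1$ violates exactly when $n\geq 2$. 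This buys uniformity in $n$ (no case distinction or substitution trick), and it makes completely transparent both why the hypothesis $n\geq 2$ is needed and where the nondegeneracy of the symplectic structure of $\setH^n$ enters; the paper's version buys self-containedness at the level of elementary algebra, at the cost of an identity that must be verified ad hoc. Two small points worth making explicit in your write-up: the dimension bound for isotropic subspaces does require $\omega$ nondegenerate (true for $B^1$ of $\setH^n$ as in \autoref{rem:HeisAsStep2}, but it is the one structural input beyond step $2$), and the fact that $[V,W]$ is again tangent to $S$ uses that $f$ is $C^2$, which is exactly the regularity assumed in the statement.
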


\begin{proof}
	We need to prove that 
	$$
	\forall x\in S, \qquad \mathcal{D}_x + [\mathcal{D},\mathcal{D}]_x = T_xS.
	$$
	Let us give the proof first for $n=2$. We work locally around $x\in S$ so that we can assume that there exists $f\in C^{2}(\setH^n)$ such that
	$$
	S= \{x\in \setH^n : f(x)=0 \}.
	$$
	We define locally the vector fields 
	\begin{align*}
	Y_1:= -(X_2f)X_1+(X_1f)X_2-(X_4f)X_3+(X_3f)X_4, \\
	Y_2:= -(X_3f)X_1+(X_4f)X_2+(X_1f)X_3-(X_2f)X_4, \\
	Y_3:= -(X_4f)X_1-(X_3f)X_2+(X_2f)X_3+(X_1f)X_4.
	\end{align*}
	
	We have that for each $x\in S$, the linear space $\mathcal{D}_x$ is a three-dimensional subspace, because $x$ is a non-characteristic point, and then it is easy to see that 
	\begin{equation}\label{WantTo}
	\mathcal{D}_x = \mbox{span}\{Y_1|_x,Y_2|_x,Y_3|_x\}, \qquad \forall x\in S.
	\end{equation}
	Now by doing the computations exploiting the definition of $Y_i$, and using that $[X_1,X_3]=[X_2,X_4]=X_5$, we can show that 
	\begin{align*}
	[Y_1,Y_2] = \star_1 - 2(X_1fX_2f+X_3fX_4f)X_5, \\
	[Y_1,Y_3] = \star_2 + ((X_1f)^2+(X_3f)^2- (X_2f)^2-(X_4f)^2)X_5, \\
	[Y_2,Y_3] = \star_3 + 2(X_1fX_4f-X_2fX_3f)X_5,
	\end{align*} 
	where $\star_1, \star_2, \star_3$ are some combinations of $X_1, X_2, X_3, X_4$ with function coefficients.
	
	It is easy to check that it is not possible to have, at some point $x\in S$,
	\begin{align*}
	(X_1f)(x)(X_2f)(x)+(X_3f)(x)(X_4f)(x)=0, \\
	(X_1f)^2(x)+(X_3f)^2(x)- (X_2f)^2(x)-(X_4f)^2(x) = 0, \\
	(X_1f)(x)(X_4f)(x)-(X_2f)(x)(X_3f)(x)=0,
	\end{align*}
	because otherwise $(X_1f)(x)=(X_2f)(x)=(X_3f)(x)=(X_4f)(x)=0$, which is impossible because there are no characteristic points. Then at least one among $[Y_1,Y_2]|_x, [Y_1,Y_3]|_x, [Y_2,Y_3]|_x$ has a component along $X_5|_x$. Then, as in each point $X_1|_x, X_2|_x, X_3|_x, X_4|_x$ and $X_5|_x$ are linearly independent, this means that there exists at least one among $[Y_1,Y_2]|_x, [Y_1,Y_3]|_x, [Y_2,Y_3]|_x$ which is not in $\mathcal{D}_x$. Then as $[\mathcal{D},\mathcal{D}]_x \subseteq T_xS$, and it holds that there exists an element in $[\mathcal{D},\mathcal{D}]_x$ which is not in $\mathcal{D}_x$, we get the conclusion.
	If $n>2$ we can argue exactly in the same way. Indeed, there exists $i$ with $1\leq i\leq 2n$ such that $X_if(x)\neq 0$ and one runs the same argument  substituting $X_1,X_2,X_3,X_4$ with $X_i,X_j,X_{i+n},X_{j+n}$ with $j\neq i$.
\end{proof}

\subsubsection{Local equivalence of the sub-Riemannian distance and the induced distance}\label{1.4}
Let $S$ be a smooth non-characteristic hypersurface in the Heisenberg group $\mathbb H^n$, $n \geq 2$.
From \autoref{prop:MainProp2} we have a bracket generating distribution $\mathcal{D}$ in the Euclidean tangent bundle $TS$ of $S$. Hence $S$ has the structure of sub-Riemannian manifold: we fix a scalar product $\bold g$ on $\mathbb V_1$, the horizontal bundle of $\mathbb H^n$, which induces a scalar product on $\mathcal{D}$. This scalar product defines a sub-Riemannian distance on $S$ by taking the infimum of the length - measured with the norm $\|\cdot\|_{\bold g}$ associated to $\bold g$ - of all the horizontal - according to $\mathcal{D}$ - curves in $S$. 
We will call this distance $d_{\rm int}$, the \emph{intrinsic distance} on $S$. 
We can also equip $S$ with the restriction of the distance of $\mathbb H^n$, which we will induced distance and with a little abuse of notation we denote it by $d$.

\begin{proposition}\label{prop:DistanceAreEquivalent}
	Let $(\mathbb H^n,d)$, with $n\geq 2$, be the Heisenberg group equipped with the sub-Riemannian distance coming from a scalar product $\bold g$ on the horizontal distribution. Let $S$ be a $C^{\infty}$ non-characteristic hypersurface in $\mathbb H^n$. For each $p\in S$ there exists an open neighbourhood $U_p$ of $p$ such that 
	$$
	d(x,y) \sim d_{\rm int}(x,y) \qquad \forall x,y\in U_p,
	$$
	where $d_{\rm int}$ has been introduced at the beginning of Section \ref{1.4}.
\end{proposition}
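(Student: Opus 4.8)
The plan is to recognize $S$ locally as an intrinsic Lipschitz graph and then to chain together \autoref{prop:EquivalentDistances} and \autoref{rem:SubRiemannianDistance}. Since a $C^{\infty}$ non-characteristic hypersurface is in particular a $C^1_{\rm H}$-hypersurface (\autoref{rem:Characteristic}), \autoref{prop:C1H=iLip} shows that, in a neighbourhood of $p$, the set $S$ is the graph of an intrinsic Lipschitz function $\tilde\phi$ defined on the codimension-one vertical subgroup $\mathbb W:=T_p^{\rm I}S$ with values in a complementary horizontal subgroup $\mathbb H$. By the remark following \autoref{prop:C1H=iLip}, $\mathbb H$ is the one-dimensional horizontal line spanned by the horizontal gradient of a defining function at $p$, and $\mathbb W$ is its complementary codimension-one vertical subgroup; this is exactly the decomposition of \eqref{def:LandW}, except that the horizontal line $\mathbb H$ need not be the coordinate line $\mathbb L$.

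First I would move $(\mathbb W,\mathbb H)$ into the standard position $(\mathbb W,\mathbb L)$ of Section \ref{sub:CarnotStep2}. The unitary group $U(n)$ acts on $\mathbb H^n$ by isometric Carnot automorphisms and transitively on horizontal unit vectors, so composing with a suitable such automorphism turns $\mathbb H$ into $\mathbb L$ and $\mathbb W$ into the standard vertical subgroup; this operation preserves both $d$ and $d_{\rm int}$ up to bi-Lipschitz constants, which is all we need. After extending $\tilde\phi$ to a globally defined intrinsic Lipschitz function on $\mathbb W$ (the extended graph agreeing with $S$ near $p$), \autoref{prop:EquivalentDistances} applies and gives $d(x,y)\sim d^{\Gamma}(x,y)$ on $\Gamma:={\rm graph}(\tilde\phi)$, with $d^{\Gamma}$ the intrinsic length distance of \eqref{eqn:DGamma}. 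It then remains to identify $d^{\Gamma}$ with $d_{\rm int}$: this is precisely \autoref{rem:SubRiemannianDistance}, whose hypotheses hold because $S$ is a smooth submanifold and, by \autoref{prop:MainProp2}, $\mathcal D=\mathbb V_1\cap TS$ is step-$2$ bracket generating. Chaining the two facts yields $d\sim d_{\rm int}$.

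The delicate point, and the reason the conclusion is only local, is the matching of the globally defined distances $d^{\Gamma}$ and $d_{\rm int}$ on the patch where $\Gamma$ equals $S$. Both are infima of lengths of horizontal curves, so I must guarantee that the near-minimizing curves joining two nearby points of $U_p$ stay inside $S\cap U$; since, by \autoref{prop:MainProp2}, $S$ carries a genuine sub-Riemannian structure whose distance induces the manifold topology, on a small enough $U_p$ the distance is realized by curves contained in $U_p$, and the equivalence transfers. The only remaining verifications are that the chosen isometric automorphism does not distort the distances beyond bi-Lipschitz constants and that the intrinsic Lipschitz extension is available in this codimension-one setting; the rest is a direct assembly of \autoref{prop:C1H=iLip}, \autoref{prop:EquivalentDistances}, \autoref{rem:SubRiemannianDistance}, and \autoref{prop:MainProp2}.
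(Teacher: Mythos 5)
Your proposal is correct and follows essentially the same route as the paper's proof: write $S$ locally as an intrinsic Lipschitz graph via \autoref{prop:C1H=iLip}, normalize $(\mathbb W,\mathbb H)$ to the standard position of \eqref{def:LandW} by an isometric automorphism, apply \autoref{prop:EquivalentDistances} to get $d\sim d^{\Gamma}$, and identify $d^{\Gamma}$ with $d_{\rm int}$ locally through \autoref{rem:SubRiemannianDistance} and \autoref{prop:MainProp2}. Your extra care about the localization of the length infima and the global definition of the graph function only makes explicit what the paper leaves implicit.
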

\begin{proof}
	By \autoref{prop:C1H=iLip} we get that locally around $p\in S$, the hypersurface $S$ is the graph $\Gamma$ of a globally defined intrinsic Lipschitz function on the tangent group $\mathbb W=T_x^{\rm I}S$. By changing coordinates, we can assume $\mathbb W$ as in \eqref{def:LandW}. Then by \autoref{prop:EquivalentDistances} we get that $d^{\Gamma}\sim d$ and from \autoref{rem:SubRiemannianDistance} we get that, locally on $S$, $d_{\rm int}=d^{\Gamma}$, so that we get the result. 
\end{proof}
\subsubsection{Tangents of $C^{\infty}$ non-characteristic hypersurfaces}
Now we know that a $C^{\infty}$ non-characteristic hypersurface in the Heisenberg groups $\mathbb H^n$, $n\geq 2$, is a sub-Riemannian manifold. With the aim of using the rectifiability result from \cite[Theorem 1]{LDY19}, we calculate the possible tangents of $S$. We begin with a preparatory lemma.
\begin{lemma}\label{lem:IsoSub}
	Every vertical subgroup of codimension one in $\mathbb H^n$, $n\geq 2$, is isomorphic to $\mathbb H^{n-1}\times \mathbb R$, which is a Carnot subgroup.
\end{lemma}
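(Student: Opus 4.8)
The plan is to reduce the statement to the linear-algebra normal form of the alternating bracket form of a step-$2$ Carnot algebra with one-dimensional center. Using the coordinates of Section \ref{sub:CarnotStep2} together with \autoref{rem:HeisAsStep2}, the Lie algebra of $\mathbb H^n$ is the step-$2$ stratified algebra with first stratum $\Vv_1=\mathrm{span}\{X_1,\dots,X_{2n}\}$, one-dimensional center $\Vv_2=\mathrm{span}\{Y_1\}$, and the only nonzero brackets $[X_j,X_{j+n}]=Y_1$ for $j=1,\dots,n$. Equivalently, the bracket is encoded by the nondegenerate alternating form $\omega$ on $\Vv_1$ with $\omega(X_j,X_{j+n})=1$. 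By \autoref{def:VerticalSurface} a vertical subgroup $\mathbb W$ of codimension $1$ is, in these coordinates, cut out by a single linear equation $\sum_{i=1}^{2n}a_ix_i=0$ on the horizontal variables; hence its Lie algebra is $\mathfrak w=H\oplus\Vv_2$, where $H=\{v\in\Vv_1:\sum_i a_iv_i=0\}$ is a hyperplane of $\Vv_1$, and the bracket on $\mathfrak w$ is exactly the restriction $\omega|_{H\times H}$.

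First I would compute the radical of $\omega|_H$. Since $\dim H=2n-1$ is odd, $\omega|_H$ is necessarily degenerate. More precisely, writing $H^{\perp}$ for the $\omega$-orthogonal of $H$ inside $\Vv_1$, nondegeneracy of $\omega$ gives $\dim H^{\perp}=2n-(2n-1)=1$; and $H^{\perp}\subseteq H$, for otherwise $H\cap H^{\perp}=\{0\}$ would force $\omega|_H$ to be nondegenerate on an odd-dimensional space, which is impossible. Therefore the radical $H\cap H^{\perp}$ of $\omega|_H$ is exactly one-dimensional, so $\omega|_H$ has rank $2n-2$. This is the step where the hypothesis $n\geq 2$ enters: it guarantees $\mathrm{rank}=2n-2\geq 2>0$, i.e. that the center $\Vv_2$ still lies in $[\mathfrak w,\mathfrak w]$, so that $\mathfrak w$ is genuinely stratified and generated by its first layer.

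Next I would put $\omega|_H$ in normal form. Choosing a vector $Z$ spanning the radical and completing it to a symplectic-type basis $\{E_1,F_1,\dots,E_{n-1},F_{n-1},Z\}$ of $H$ with $\omega(E_i,F_i)=1$ and all other pairings among these vectors equal to zero, the assignment sending $E_i,F_i$ to the standard first-layer generators of $\mathfrak h^{n-1}$, sending $Z$ to the central generator of the $\setR$ factor, and sending $Y_1$ to the center of $\mathfrak h^{n-1}$, respects the bracket relations $[E_i,F_i]=Y_1$ and the vanishing of all brackets involving $Z$; it is therefore a graded Lie-algebra isomorphism $\mathfrak w\cong \mathfrak h^{n-1}\times\setR$. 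Since $\mathfrak w$ is a stratified subalgebra of the Lie algebra of $\mathbb H^n$ and is invariant under the dilations $\delta_\lambda$, the subgroup $\mathbb W=\exp\mathfrak w$ is a Carnot subgroup of $\mathbb H^n$ isomorphic to $\mathbb H^{n-1}\times\setR$, which finishes the proof.

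I expect the only genuinely non-routine point to be the rank computation of the second paragraph, namely that the restriction of a nondegenerate alternating form to a codimension-one subspace drops the rank by exactly one; everything else is either the explicit bracket description of $\mathbb H^n$ or the standard symplectic normal form of an alternating form.
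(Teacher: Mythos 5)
Your proof is correct and follows essentially the same route as the paper's: restrict the nondegenerate alternating bracket form of $\mathfrak h^n$ to the codimension-one hyperplane $H$ of the first stratum, show its radical is exactly one-dimensional, and conclude by the symplectic normal form. The only (cosmetic) difference is how the one-dimensionality of the radical is established — you use the dimension count $\dim H^{\perp}=1$ together with $H^{\perp}\subseteq H$, whereas the paper argues by contradiction that two independent radical vectors of $\omega|_H$ would produce a radical vector for the full form.
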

\begin{proof}
	Let us take $\Gw$ a codimension-one subspace of the first stratum $\Gg_1$ of the Lie algebra of $\mathbb H^n$, which we call $\Gh^n$, and let us consider a decomposition $\Gg_1=\mathbb R z\oplus \Gw$. If we consider the brackets restricted to $\Gw$, which is odd-dimensional, we have by skew-symmetry that there exists $v\in \Gw$ such that 
	$$
	[v,w]=0, \qquad \forall w\in \Gw. 
	$$
	Now if we consider a decomposition $\Gw=\mathbb Rv\oplus \Gw'$, we claim that $[\cdot,\cdot]$ on $\Gw'$ is skew-symmetric and non-degenerate. Indeed, if it is not so, there exists $v'\in\Gw'$ such that $[v',w']=0$ for every $w'\in \Gw'$. Now we claim that there exist real numbers $\alpha,\beta$ such that $v'':=\alpha v+\beta v'$ satisfies $[v'',h]=0$ for every $h\in \Gg_1$. Indeed, if $h\in \Gw$, this holds just by how $v$ and $v'$ are defined. Then we can choose $\alpha,\beta$ such that $[v'',z]=0$. Then this is a contradiction because $[\cdot,\cdot]$ is non degenerate on $\Gg_1$. 
	
	Then by the fundamental theorem of symplectic forms $\Gw'$ is isomorphic as Lie algebra to $\Gh^{n-1}$, from which the thesis follows. 
\end{proof}
\begin{proposition}\label{prop:EverywhereIsomorphic}
	Let $S$ be a $C^{\infty}$-hypersurface in $\mathbb H^n$, $n\geq 2$, with no characteristic points. Let $\mathcal{D}$ be as in \eqref{eqn:D} and $\bold g$ be a scalar product on the horizontal bundle $\mathbb V_1$ of $\mathbb H^n$. 
	
	Then the triple $\left(S,\mathcal{D},\bold g|_{\mathcal{D}\times \mathcal{D}}\right)$ is an equiregular sub-Riemannian manifold with Hausdorff dimension $2n+1$. At each point $x\in S$ we have that the Gromov-Hausdorff tangent is unique and it is isometric the Carnot group $\mathbb H^{n-1}\times \mathbb R$ endowed with some Carnot-Carathéodory distance.
\end{proposition}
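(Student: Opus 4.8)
The plan is to read the statement off the general theory of equiregular sub-Riemannian manifolds, reducing the identification of the tangent to the purely algebraic computation already carried out in \autoref{lem:IsoSub}. Since $S$ is smooth and non-characteristic it is in particular a $C^1_{\rm H}$-hypersurface (\autoref{rem:Characteristic}), so at every $x\in S$ the tangent group $T_x^{\rm I}S$ of \autoref{Def:TangentGroup} is defined and is a vertical subgroup of codimension one. First I would verify equiregularity. By \autoref{prop:MainProp2} the distribution $\mathcal{D}_x=\mathbb V_1(x)\cap T_xS$ is step-$2$ bracket generating, so $\mathcal{D}^2_x=T_xS$ has constant dimension $2n$; moreover, being $S$ non-characteristic, $\mathbb V_1(x)\neq T_xS$, and since both are $2n$-dimensional subspaces of the $(2n+1)$-dimensional space $T_x\mathbb H^n$, a dimension count gives $\dim\mathcal{D}_x=2n+2n-(2n+1)=2n-1$ at every point. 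Hence the growth vector is the constant $(2n-1,2n)$ and $(S,\mathcal{D},\bold g|_{\mathcal{D}\times\mathcal{D}})$ is equiregular.

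Second, I would invoke Mitchell's theorem and its refinements \cite{Mit85,Bel96,Jean14}: for an equiregular sub-Riemannian manifold the metric tangent cone at each point exists, is unique, and is isometric to the nilpotentization, which is a Carnot group endowed with a Carnot-Carathéodory distance induced by $\bold g$. Its homogeneous dimension, which equals the Hausdorff dimension of $(S,d_{\rm int})$, is $Q=1\cdot(2n-1)+2\cdot\big(2n-(2n-1)\big)=2n+1$. This takes care of equiregularity, uniqueness of the tangent and the Hausdorff dimension, and leaves only the identification of the isomorphism type of the nilpotentization, a step-$2$ Carnot group whose layers have dimensions $2n-1$ and $1$.

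The heart of the matter is to show that this nilpotent Lie algebra is isomorphic to $\Gh^{n-1}\oplus\mathbb R$, and here I would prove that the intrinsic bracket $\mathcal{D}_x\times\mathcal{D}_x\to T_xS/\mathcal{D}_x$ is nothing but the Heisenberg symplectic pairing restricted to $\mathcal{D}_x$. To see this, consider the projection $T_x\mathbb H^n\to T_x\mathbb H^n/\mathbb V_1(x)$ onto the one-dimensional quotient along the horizontal directions. Its restriction to $T_xS$ has kernel $T_xS\cap\mathbb V_1(x)=\mathcal{D}_x$ and is surjective because $T_xS\not\subseteq\mathbb V_1(x)$, so it induces an isomorphism $T_xS/\mathcal{D}_x\cong T_x\mathbb H^n/\mathbb V_1(x)$. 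Now, using that the only non-trivial brackets of horizontal generators are $[X_i,X_{n+i}]=Z$ (where $Z$ spans the center of $\Gh^n$), for two sections $V=\sum_i a_iX_i$, $W=\sum_i b_iX_i$ of $\mathcal{D}$ one has $[V,W]=(\text{horizontal field})+\omega(V,W)\,Z$ with $\omega$ the standard symplectic form on $\mathbb V_1$; projecting modulo $\mathbb V_1(x)$ kills the horizontal summand, so the induced bracket on $\mathcal{D}_x$ is exactly $\omega|_{\mathcal{D}_x}$. Therefore the nilpotentization has the same Lie algebra as $T_x^{\rm I}S$, and since $T_x^{\rm I}S$ is a vertical subgroup of codimension one, \autoref{lem:IsoSub} identifies it with $\mathbb H^{n-1}\times\mathbb R$, as desired.

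The hard part will be precisely this last identification, because the intrinsic bracket of the nilpotentization is defined through arbitrary local extensions of tangent vectors and one must make sure that the horizontal correction term appearing in $[V,W]$ does not contribute. The quotient trick above is exactly what removes this difficulty: the correction lies in $\mathbb V_1(x)$ and hence becomes invisible in the one-dimensional layer $T_xS/\mathcal{D}_x\cong T_x\mathbb H^n/\mathbb V_1(x)$. Once this is settled, the fact that $\omega|_{\mathcal{D}_x}$ has one-dimensional kernel---so that the resulting algebra is $\Gh^{n-1}\oplus\mathbb R$ rather than some other step-$2$ algebra---is already contained in the symplectic-form argument proving \autoref{lem:IsoSub}, which I would simply cite.
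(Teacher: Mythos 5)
Your proposal is correct and follows essentially the same route as the paper: the dimension count giving growth vector $(2n-1,2n)$, Mitchell--Bellaïche for uniqueness of the tangent and the Hausdorff dimension $2n+1$, and the identification of the nilpotentization with a codimension-one vertical subgroup so that \autoref{lem:IsoSub} applies. The only difference is that you spell out, via the quotient isomorphism $T_xS/\mathcal{D}_x\cong T_x\mathbb H^n/\mathbb V_1(x)$, why the nilpotentization bracket is the restricted symplectic form --- a step the paper's proof leaves implicit in the phrase ``bracket operation inherited by the brackets in the Heisenberg group.''
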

\begin{proof}
	Because of the fact that $S$ is non-characteristic it follows that $\mathcal{D}_x$ has dimension $2n-1$ at each point $x\in S$. Also it is a direct consequence of \autoref{prop:MainProp2} that, for each $x\in S$, the linear space $\mathcal{D}_x+[\mathcal{D},\mathcal{D}]_x$ has dimension $2n$. Then $\left(S,\mathcal{D},\bold g|_{\mathcal{D}\times \mathcal{D}}\right)$ is an equiregular sub-Riemannian manifold with weights $(2n-1,1)$. Then the Hausdorff dimension of $S$ with respect to the associate sub-Riemannian distance $d_{\rm int}$ is $2n+1$ by \cite[Theorem 2]{Mit85} (see also \cite[Theorem 3.1]{GJ16}).
	
	By \cite{Bel96} (see also \cite[Theorem 2.5]{Jean14}, \cite[page 25]{Jean14}) it follows, as we are in the equi-regular case, that the Gromov-Hausdorff tangent at any point $x\in S$ is isometric to the Carnot group 
	$$
	\mathbb V_x:=\mathcal{D}_x\oplus ((\mathcal{D}_x+[\mathcal{D},\mathcal{D}]_x)/\mathcal{D}_x)
	$$
	with the bracket operation inherited by the brackets in the Heisenberg group, endowed with some Carnot-Carathéodory distance. Then $\mathbb V_x$ is isomorphic to a vertical subgroup of $\mathbb H^n$ of codimension $1$ and thus isomorphic to $\mathbb H^{n-1}\times \mathbb R$ by \autoref{lem:IsoSub}.
\end{proof}
\subsubsection{Carnot-rectifiability of $C^{\infty}$-hypersurfaces}
We conclude with the main result of this section.
\begin{theorem}\label{thm:FinalTheorem}
	Let $(\mathbb H^n,d)$, with $n\geq 2$, be the $n$-th Heisenberg group equipped with a left-invariant homogenenous distance $d$. If $S$ is a $C^{\infty}$-hypersurface in $\mathbb{H}^n$, then the metric space $(S,d)$ has Hausdorff dimension $2n+1$ and it is  $(\{\mathbb H^{n-1}\times \mathbb R\},\mathcal{H}^{2n+1})$-rectifiable according to \autoref{def:Grect}.
\end{theorem}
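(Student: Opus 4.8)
The plan is to reduce the statement to the non-characteristic case treated in the previous propositions and then feed \autoref{prop:EverywhereIsomorphic} and \autoref{prop:DistanceAreEquivalent} into \cite[Theorem 1]{LDY19}. Let $\Sigma\subseteq S$ denote the characteristic set, i.e.\ the set of those $x\in S$ with $\mathbb V_1(x)\subseteq T_xS$ (equivalently, $X_1f(x)=\dots=X_{2n}f(x)=0$ for any local defining function $f$ as in \autoref{rem:Characteristic}). Since $S$ is $C^\infty$, the horizontal gradient is continuous and $\Sigma$ is closed, so $S':=S\setminus\Sigma$ is an open, $C^\infty$, non-characteristic hypersurface. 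The first step is the one external input: the classical estimate on the size of the characteristic set of a $C^2$ hypersurface in $\mathbb H^n$ gives $\mathcal H^{2n+1}(\Sigma)=0$ (recall $2n+1=Q-1$, with $Q=2n+2$ the homogeneous dimension of $\mathbb H^n$). In particular $\dim_H\Sigma\le 2n+1$, and it suffices to exhibit countably many bi-Lipschitz charts with domains in $\mathbb H^{n-1}\times\mathbb R$ covering $\mathcal H^{2n+1}$-almost all of $S'$; together with the negligible set $\Sigma$ this will cover $S$ up to an $\mathcal H^{2n+1}$-null set.

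Second, I would run the structure theory on $S'$. Fixing a scalar product $\bold g$ on the horizontal bundle $\mathbb V_1$ of $\mathbb H^n$ and letting $\mathcal D=\mathbb V_1\cap TS'$, \autoref{prop:EverywhereIsomorphic} says that $(S',\mathcal D,\bold g|_{\mathcal D\times\mathcal D})$ is an equiregular sub-Riemannian manifold whose intrinsic distance $d_{\rm int}$ has Hausdorff dimension $2n+1$ and whose Gromov--Hausdorff tangent at every point is isometric to the fixed Carnot group $\mathbb H^{n-1}\times\mathbb R$. Thus $(S',d_{\rm int})$ is a sub-Riemannian manifold with constant tangent $\mathbb H^{n-1}\times\mathbb R$, and \cite[Theorem 1]{LDY19} yields countably many bi-Lipschitz embeddings $g_i\colon V_i\subseteq\mathbb H^{n-1}\times\mathbb R\to(S',d_{\rm int})$ such that $\mathcal H^{2n+1}_{d_{\rm int}}(S'\setminus\bigcup_i g_i(V_i))=0$.

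Third comes the transfer from the intrinsic distance $d_{\rm int}$ to the induced distance $d$. By \autoref{prop:DistanceAreEquivalent} every $p\in S'$ has a neighbourhood $U_p$ with $d\sim d_{\rm int}$ on $U_p$; since $S'$ is second countable I pick a countable subcover $\{U_k\}_k$. On each $U_k$ the identity $(U_k,d_{\rm int})\to(U_k,d)$ is bi-Lipschitz, hence $\mathcal H^{2n+1}_{d_{\rm int}}$ and $\mathcal H^{2n+1}_d$ are comparable on $U_k$; consequently the two measures have the same null sets on $S'$ and $\dim_H(S',d)=\dim_H(S',d_{\rm int})=2n+1$. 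For each pair $(i,k)$ the restricted map $g_i|_{g_i^{-1}(U_k)}\colon g_i^{-1}(U_k)\subseteq\mathbb H^{n-1}\times\mathbb R\to(S',d)$ is then bi-Lipschitz for the induced distance, these are countably many, and their images cover $\mathcal H^{2n+1}_d$-almost all of $S'$. Adjoining the $\mathcal H^{2n+1}$-null set $\Sigma$ shows that $(S,d)$ is $(\{\mathbb H^{n-1}\times\mathbb R\},\mathcal H^{2n+1})$-rectifiable in the sense of \autoref{def:Grect}; and since $\dim_H S'=2n+1$ while $\dim_H\Sigma\le 2n+1$, the space $(S,d)$ has Hausdorff dimension exactly $2n+1$.

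I expect the genuine obstacle to be precisely the first step, the negligibility $\mathcal H^{2n+1}(\Sigma)=0$ of the characteristic set: it is what funnels the general $C^\infty$ statement back to the non-characteristic regime in which \autoref{prop:EverywhereIsomorphic} and \autoref{prop:DistanceAreEquivalent} were proved. Everything after it is a routine assembly of bi-Lipschitz equivalences together with the black-box application of \cite[Theorem 1]{LDY19}; the only mild care needed there is that $d$ and $d_{\rm int}$ agree only locally, which forces the extra indexing by the countable cover $\{U_k\}_k$ when producing the final charts.
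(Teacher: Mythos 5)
Your proposal is correct and follows essentially the same route as the paper: discard the characteristic set using Balogh's theorem that $\mathcal H^{2n+1}(\Sigma)=0$, apply \autoref{prop:EverywhereIsomorphic} together with \cite[Theorem 1]{LDY19} on the non-characteristic part to get bi-Lipschitz charts for the intrinsic distance, and transfer to the induced distance via the local equivalence of \autoref{prop:DistanceAreEquivalent}. The only cosmetic difference is that the paper covers $S\setminus\Sigma$ by countably many open non-characteristic pieces $U_x$ and treats each as a globally non-characteristic hypersurface, whereas you work on all of $S\setminus\Sigma$ at once and introduce the countable cover only when localizing the distance comparison; both assemblies are equivalent.
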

\begin{proof}
	Let us assume first that $S$ has no characteristic points. In this case it directly follows from \cite[Theorem 1]{LDY19} and \autoref{prop:EverywhereIsomorphic} that the metric space $(S,d_{\rm int})$ has Hausdorff dimension $2n+1$ and it is $(\{\mathbb H^{n-1}\times \mathbb R\},\mathcal{H}^{2n+1}_{d_{\rm int}})$-rectifiable according to \autoref{def:Grect}. Then by  \autoref{prop:DistanceAreEquivalent} we obtain that $(S,d)$ is $(\{\mathbb H^{n-1}\times \mathbb R\},\mathcal{H}^{2n+1}_d)$-rectifiable.
	
	In the general case, calling $\Sigma_S$ the set of characteristic points, we know that $\mathcal{H}^{2n+1}\left(\Sigma_S\right)=0$ by \cite[Theorem 1.1]{Bal03}  (see also \cite[Theorem 2.16]{Mag06}). Moreover if $x\in S$ is a non-characteristic point, there exists $U_x$ open subset of $S$ containing $x$ such that $U_x$ is a smooth non-characteristic hypersurface. Then we can use the previous argument to conclude that $(U_x,d)$ is $(\{\mathbb H^{n-1}\times \mathbb R\},\mathcal{H}^{2n+1})$-rectifiable and by covering $S\setminus \Sigma_S$ with countably many $U_x$'s we get the conclusion.
\end{proof}
\begin{remark}\label{rem:BiLipPauls}
	By \autoref{thm:FinalTheorem} it follows that any smooth hypersurface $S$ is $\mathbb H^{n-1}\times \mathbb R$-rectifiable according to the bi-Lipschitz variant of Pauls' definition \cite[Definition 3]{CP04}, see \autoref{rem:TangRectIsCarnotRect} for more details about this definition.
\end{remark}


\begin{thebibliography}{FSSC03a}
		\bibitem[AK99]{AK99}
		\textsc{Ambrosio L., Kirchheim B.:}
		\textit{Rectifiable sets in metric and Banach spaces.}
		Math. Ann. 318
		(2000), 527–555.
		\bibitem[ASCV06]{ASCV06}
		\textsc{Ambrosio, L., Serra Cassano, F., Vittone, D.:}
		\textit{Intrinsic regular hypersurfaces in Heisenberg groups.}
		J. Geom. Anal. 16 (2006), 187–232.
		\bibitem[AS09]{AS09}
		\textsc{Arena, G.; Serapioni, R.:} \textit{Intrinsic regular submanifolds in Heisenberg groups are differentiable graphs}. Calc. Var.
		Partial Differential Equations 35, (2009), no. 4, 517-536.
		\bibitem[Bal03]{Bal03}
		\textsc{Balogh, Z. M.:}
		\textit{Size of characteristic sets and functions with prescribed gradient}. J. Reine Angew. Math. 564 (2003), 63–83.
		\bibitem[Bel96]{Bel96}
		\textsc{Bellaïche, A.:}
		\textit{The tangent space in sub-Riemannian geometry}. Sub-Riemannian geometry,
		Progr.Math., vol. 144, Birkhäuser, Basel, (1996), pp. 1–78.
		\bibitem[BV10]{BV10}
		\textsc{Bigolin, F.; Vittone, D.:}
		\textit{Some remarks about parametrizations of intrinsic regular surfaces
		in the Heisenberg group.} Publ.Mat. 54(1) (2010), 159–172.
		\bibitem[BLU07]{BLU07}
		\textsc{Bonfiglioli, A.; Lanconelli, E.; Uguzzoni, F.:}
		\textit{Stratified Lie Groups and Potential Theory for their Sub-Laplacians}.
		Springer Monographs in Mathematics, Springer-Verlag Berlin Heidelberg, New York, (2007).
		\bibitem[BBI01]{BBI01}
		\textsc{Burago, D.; Burago, Y.; Ivanov, S.:}
		\textit{A course in metric geometry}. Graduate Studies in Mathematics, Amer. Math.
		Soc., (2001).
		\bibitem[CFO19]{CFO19}
		\textsc{Chousionis, V.; Fässler, K.; Orponen, T.:}
		\textit{Boundedness of singular integrals on $C^{1,\alpha}$
		intrinsic graphs in the Heisenberg group}. Adv. Math., (2019), 354:106745.
		\bibitem[CM06]{CM06}
		\textsc{Citti, G.; Manfredini, M.:}
		\textit{Implicit function theorem in Carnot-Carathéodory spaces}. Commun. Contemp. Math. 8(5) (2006), 657–680.
		\bibitem[CMPSC14]{CMPSC14}
		\textsc{Citti, G.; Manfredini, M.; Pinamonti, A.; Serra Cassano, F.:} 
		\textit{Smooth approximation for the intrinsic Lipschitz functions
		in the Heisenberg group}. Calc. Var. Partial Differ. Equ. 49 (2014), 1279-1308.
		\bibitem[CP06]{CP04}
		\textsc{Cole, D. R.; Pauls, S. D.:}
		\textit{$C^1$-hypersurfaces of the Heisenberg
		group are N-rectifiable}. Houston J. Math. 32(3) (2006), 713–724
		(electronic).
		\bibitem[DD18]{DiDonato18}
		\textsc{Di Donato, D.;}
		\textit{Intrinsic differentiability and intrinsic regular surfaces in Carnot groups}.
		Preprint on arXiv, arXiv:1811.05457 (2018).
		\bibitem[DD19]{DiDonato19}
		\textsc{Di Donato, D.;}
		\textit{Intrinsic Lipschitz graphs in Carnot groups of step 2}. Preprint on arXiv, 	arXiv:1903.02968 (2019).
		\bibitem[DMV19]{DMV19}
		\textsc{Don, S.; Massaccesi, A.; Vittone, D.:}
		\textit{Rank-one theorem and subgraphs of BV functions in Carnot groups}. J. Funct. Anal. 276 (2019), no. 3, 687–715.
		\bibitem[Fed69]{Fed69}
		\textsc{Federer, H.:}
		\textit{Geometric Measure Theory}. Springer, New York (1969).
		\bibitem[FMS14]{FMS14}
		\textsc{Franchi, B.; Marchi, M.; Serapioni, R.;}
		\textit{Differentiability and approximate differentiability for intrinsic Lipschitz functions in Carnot groups and a Rademarcher Theorem}. Anal. Geom. Metr. Spaces 2 (2014), 258-281.
		\bibitem[FS16]{FS16}
		\textsc{Franchi, B.; Serapioni, R.;}
		\textit{Intrinsic Lipschitz graph within Carnot groups}. The Journal of Geometric Analysis, Vol.
		26 (3) (2016), 1946-1994.
		\bibitem[FSSC01]{FSSC01}
		\textsc{Franchi, B.; Serapioni, R.; Serra Cassano, F.:}
		\textit{Rectificability and perimeter in the Heisenberg group}. Math. Ann. 321
		(2001), 479-531.
		\bibitem[FSSC03a]{FSSC03a}
		\textsc{Franchi, B.; Serapion R. P.; Serra Cassano F.:}
		\textit{On the structure of finite perimeter sets in steps 2 Carnot groups}.
		J.Geom.Anal. 13 (2003), no. 3, 421-466. 
		\bibitem[FSSC03b]{FSSC03b}
		\textsc{Franchi, B.; Serapioni, R.; Serra Cassano, F.:}
		\textit{Regular hypersurfaces, intrinsic perimeter and implicit function theorem in Carnot groups}. Comm. Anal. Geom. 11 (2003), no. 5, 909-944.
		\bibitem[FSSC06]{FSSC06}
		\textsc{Franchi, B., Serapioni, R., Serra Cassano, F.:}
		\textit{Intrinsic Lipschitz graphs in Heisenberg groups}.
		J.Nonlinear
		Convex Anal. 7(3) (2006), 423–441.
		\bibitem[FSSC07]{FSSC07}
		\textsc{Franchi, B.; Serapioni, R.; Serra Cassano, F.:}
		\textit{Regular Submanifolds, Graphs and area formula in Heisenberg Groups}.
		Advances in Math. 211 (2007), no. 1, 152-203.
		\bibitem[FSSC11]{FSSC11}
		\textsc{Franchi, B., Serapioni, R., Serra Cassano, F.:} 
		\textit{Rademacher theorem for intrinsic Lipschitz continuous functions}. J. Geom. Anal. 21
		(2011), 1044–1084.
		\bibitem[GJ16]{GJ16}
		\textsc{Ghezzi, R.; Jean, F.:}
		\textit{Hausdorff volume in non equiregular sub-Riemannian manifolds}. Nonlinear Analysis 126 (2015), 345-377.
		\bibitem[Gong98]{Gong98}
		\textsc{Gong, M.-P.;}
		\textit{Classification of nilpotent Lie algebras of dimension 7 (over algebraically closed fields and R)}. ProQuest LLC, Ann Arbor, MI, (1998), Thesis (Ph.D.)–
		University of Waterloo (Canada). MR 2698220
		\bibitem[Grom96]{Grom96}
		\textsc{Gromov, M.:} \textit{Carnot-Carathéodory spaces seen from within}, Sub-Riemannian Geometry, vol. 144 of Progress
		in Mathematics. Birkhauser, Basel, (1996), pp. 79–323.
		\bibitem[Hal80]{Hal80}
		\textsc{Hale, J. K.:}
		\textit{Ordinary differential equations}. Robert E. Krieger Publishing Co., Inc., Huntington, N.Y., second
		edition, 1980.
		\bibitem[Jean14]{Jean14}
		\textsc{Jean, F.:}
		\textit{Control of Nonholonomic Systems: from Sub-Riemannian Geometry to Motion Planning}, Springer Briefs in Mathematics, Springer, Cham, (2014).
		\bibitem[Koz15]{Koz15}
		\textsc{Kozhevnikov, A.:}
		\textit{Propriétés métriques des ensembles de niveau des applications différentiables
		sur les groupes de Carnot}, Géométrie métrique [math.MG]. Université Paris Sud - Paris XI, (2015).
		\bibitem[LD11]{LD11}
		\textsc{Le Donne, E.:}
		\textit{Metric spaces with unique tangents}. Ann. Acad. Sci. Fenn. Math.
		36(2) (2011), 683–694.
		\bibitem[LD17]{LD17}
		\textsc{Le Donne, E.:}
		\textit{A Primer on Carnot Groups: Homogenous Groups, Carnot-Carathéodory Spaces, and Regularity
		of Their Isometries}. Anal. Geom. Metr. Spaces 5 (2017), 116-137.
		\bibitem[LD17b]{LD17b}
		\textsc{Le Donne, E.:}
		\textit{Lecture notes on sub-Riemannian geometry.} \url{https://sites.google.com/site/enricoledonne/}, 2017.
		\bibitem[LDNG19]{LDNG19}
		\textsc{Le Donne, E.; Nicolussi Golo, S.:}
		\textit{Metric Lie groups admitting dilations}. Preprint on ArXiv, arXiv:1901.02559 (2019).
		\bibitem[LOW14]{LOW14}
		\textsc{Le Donne, E.; Ottazzi, A.; Warhurst, B.:}
		\textit{Ultrarigid tangents of sub-Riemannian nilpotent groups}. Ann. Inst. Fourier (Grenoble) 64 (2014), no. 6, 2265–2282.
		\bibitem[LDY19]{LDY19}
		\textsc{Le Donne, E.; Young, R.:}
		\textit{Carnot rectifiability of sub-Riemannian manifolds with constant tangent}. Preprint on ArXiv, arXiv:1901.11227 (2019).
		\bibitem[Mag01]{Mag01}
		\textsc{Magnani, V.:}
		\textit{Differentiability and area formula on stratified Lie groups}. Houston J. Math. 27(2) (2001), 297–323.
		\bibitem[Mag04]{Mag04}
		\textsc{Magnani, V.:} \textit{Unrectifiability and rigidity in stratified groups}. Arch. Math. (Basel) 83 (2004), 568-576.
		\bibitem[Mag06]{Mag06}
		\textsc{Magnani, V.:}
		\textit{Characteristic points, rectifiability and perimeter measure on stratified groups}. J. Eur. Math. Soc., 8 (4) (2006) 585-609.
		\bibitem[Mag13]{Mag13}
		\textsc{Magnani, V.:}
		\textit{Towards differential calculus in stratified groups}. J. Aust. Math. Soc. 95 (2013), no. 1,
		76–128.
		\bibitem[Mit85]{Mit85}
		\textsc{Mitchell, J.:}
		\textit{On Carnot-Carathéodory metrics}. Journal of Differential Geom., 21 (1985), 35–45.
		\bibitem[Orp19]{Orp19}
		\textsc{Orponen T.:}
		\textit{Intrinsic graphs and big pieces of parabolic Lipschitz images}.
		Preprint on arXiv, arXiv:1906.10215 (2019).
		\bibitem[Pan89]{Pan89}
		\textsc{Pansu, P.:}
		\textit{Métriques de Carnot-Carathéodory et quasiisométries des espaces symétriques de rang un}. Ann. of
		Math., 129 (1989), 1-60.
		\bibitem[Pau04]{Pau04}
		\textsc{Pauls, S.D.:}
		\textit{A notion of rectifiability modeled on Carnot groups}. Indiana Univ. Math. J. 53(1) (2004), 49–81.
		\bibitem[Rig18]{Rig18}
		\textsc{Rigot, S.:}
		\textit{Differentiation of measures in metric spaces}. Preprint on arXiv,
		arXiv:1802.02069 (2018).
		\bibitem[TY04]{TY04}
		\textsc{Tan, K.H.; Yang X.P.:}
		\textit{On some sub-Riemannian objects of hypersurfaces in
		sub-Riemannian manifolds}. 
		Bull. Austral. Math. Soc., 10 (2004), 177-198.
		\bibitem[Vil09]{Vil09}
		\textsc{Villani, C.:}
		\textit{Optimal transport. Old and new}. vol. 338 of Grundlehren der Mathematischen Wissenschaften, Springer-Verlag,
		Berlin, (2009).
	\end{thebibliography}
\end{document}